\newtheoremstyle{BBstyle0}  {}{}{\itshape}{}{\bfseries}{}{6pt}{}
\newtheoremstyle{BBstyle1}  {3pt}{3pt}{\rmfamily}{}{\itshape}{: }{3pt}{}
\newtheoremstyle{BBstyle2}  {3pt}{3pt}{\itshape}{}{\bfseries\large}{}{0pt}{}
\newtheoremstyle{BBstyle3}  {}{}{\itshape}{}{\bfseries}{: }{3pt}{}
\newtheoremstyle{BBstyle4}  {}{}{\rmfamily}{}{\bfseries}{}{6pt}{}
\newtheorem{thm}{Theorem}
\newtheorem{lem}{Lemma}
\newtheorem{prop}{Proposition}
\newtheorem{cor}{Corollary}
\newtheorem{ass}{Assumption}
\theoremstyle{definition}
\newcommand{\pa}[1]{\left({#1}\right)}
\newcommand{\cro}[1]{\left[{#1}\right]}
\newcommand{\ab}[1]{\left|{#1}\right|}
\newcommand{\ac}[1]{\left\{{#1}\right\}}
\newcommand{\E}{{\mathbb{E}}}
\renewcommand{\L}{{\mathbb{L}}}
\newcommand{\N}{{\mathbb{N}}}
\newcommand{\Q}{{\mathbb{Q}}} 
\newcommand{\R}{{\mathbb{R}}}
\newcommand{\sB}{{\mathscr{B}}}
\newcommand{\sF}{{\mathscr{F}}}
\newcommand{\sQ}{{\mathscr{Q}}} 
\newcommand{\sS}{{\mathscr{S}}} 
\newcommand{\sT}{{\mathscr{T}}}
\newcommand{\sW}{{\mathscr{W}}}
\newcommand{\sX}{{\mathscr{X}}}
\newcommand{\sY}{{\mathscr{Y}}}
\DeclareMathAlphabet{\mathscrbf}{OMS}{mdugm}{b}{n}
\newcommand{\sbP}{{\mathscrbf{P}}}
\newcommand{\sbQ}{{\mathscrbf{Q}}}
\newcommand{\cA}{{\mathcal{A}}}
\newcommand{\cB}{{\mathcal{B}}}
\newcommand{\cC}{{\mathcal{C}}}
\newcommand{\cE}{{\mathcal{E}}}
\newcommand{\cF}{{\mathcal{F}}}
\newcommand{\cG}{{\mathcal{G}}} 
\newcommand{\cH}{{\mathcal{H}}}
\newcommand{\cL}{{\mathcal{L}}} 
\newcommand{\cM}{{\mathcal{M}}}
\newcommand{\cO}{{\mathcal{O}}}
\newcommand{\cP}{{\mathcal{P}}}
\newcommand{\cR}{{\mathcal{R}}}
\newcommand{\cS}{{\mathcal{S}}}
\newcommand{\cW}{{\mathcal{W}}}
\newcommand{\cX}{{\mathcal{X}}}
\newcommand{\cY}{{\mathcal{Y}}}
\newcommand{\cbR}{\boldsymbol{\mathcal{R}}}
\newcommand{\gh}{{\mathbf{h}}}
\newcommand{\gr}{{\mathbf{r}}}
\newcommand{\gw}{{\mathbf{w}}}
\newcommand{\gx}{{\mathbf{x}}}
\newcommand{\gA}{{\mathbf{A}}}
\newcommand{\gP}{{\mathbf{P}}}
\newcommand{\gR}{{\mathbf{R}}}
\newcommand{\gT}{{\mathbf{T}}}
\newcommand{\bs}[1]{\boldsymbol{#1}}
\newcommand{\bsG}{{\bs{G}}}
\newcommand{\bsS}{{\bs{S}}} 
\newcommand{\bsT}{{\bs{T}}}
\newcommand{\bsX}{{\bs{X}}}
\newcommand{\ggamma}{\bs{\gamma}}
\newcommand{\gGamma}{\bs{\Gamma}}
\newcommand{\gup}{\bs{\upsilon}}
\newlist{lista}{enumerate}{1}
\setlist[lista,1]{label=\alph*),ref=\alph*)}
\newlist{listi}{enumerate}{1}
\setlist[listi,1]{label=(\roman*),ref=(\roman*),align=left}
\newcommand{\eref}[1]{(\ref{#1})}
\newcommand{\et}{^{\star}}
\renewcommand{\L}{{\mathbb{L}}}
\newcommand{\sgnn}{\mathop{\rm sgn}}
\def\bst{\bs{\theta}}
\def\bsT{{\overline{\bs{\Theta}}}}
\def\gpen{\mathop{\rm \bf pen}\nolimits}
\def\bsg{{\ggamma}}
\def\bsG{{\overline{\bs{\Gamma}}}}
\begin{document}
\title[Estimation in Exponential-like Families]{Estimating a regression function in Exponential Families by Model Selection}
\author{Juntong CHEN}
\address{\parbox{\linewidth}{Department of Mathematics,\\
University of Luxembourg\\
Maison du nombre\\
6 avenue de la Fonte\\
L-4364 Esch-sur-Alzette\\
Grand Duchy of Luxembourg}}
\email{juntong.chen@uni.lu}

\keywords{Model selection, adaptive estimation, generalized additive models, multiple index models, ReLU neural networks, variable selection.}
\subjclass[2020]{Primary 62G05, 62G35; Secondary 62J12}
\thanks{This project has received funding from the European Union's Horizon 2020 research and innovation programme under grant agreement N\textsuperscript{o} 811017}
\date{\today}

\begin{abstract}
Let $X_{1}=(W_{1},Y_{1}),\ldots,X_{n}=(W_{n},Y_{n})$ be $n$ pairs of independent random variables. We assume that, for each $i\in\{1,\ldots,n\}$, the conditional distribution of $Y_{i}$ given $W_{i}$ belongs to a one-parameter exponential family with parameter $\bsg\et(W_{i})\in\R$, or at least, is close enough to a distribution of this form. The objective of the present paper is to estimate these conditional distributions on the basis of the observation $\bsX=(X_{1},\ldots,X_{n})$ and to do so, we propose a model selection procedure together with a non-asymptotic risk bound for the resulted estimator with respect to a Hellinger-type distance. When $\bsg\et$ does exist, the procedure allows to obtain an estimator $\widehat\bsg$ of $\bsg\et$ adapted to a wide range of the anisotropic Besov spaces. When $\bsg\et$ has a general additive or multiple index structure, we construct suitable models and show the resulted estimators by our procedure based on such models can circumvent the curse of dimensionality. Moreover, we consider model selection problems for ReLU neural networks and provide an example where estimation based on neural networks enjoys a much faster converge rate than the classical models. Finally, we apply this procedure to solve variable selection problem in exponential families. The proofs in the paper rely on bounding the VC dimensions of several collections of functions, which can be of independent interest. 
\end{abstract}

\maketitle


\section{Introduction}
We observe $n$ pairs of independent random variables $X_{i}=(W_{i},Y_{i})$, for $i\in\{1,\ldots,n\}$ with values in a measurable product space $(\sW\times\sY,\cW\otimes\cY)$ and assume (even if this may not be true) that there exists an unknown function $\bsg\et$ on $\sW$ such that for each $i\in\{1,\ldots,n\}$, the conditional distribution of $Y_{i}$ given $W_{i}$ belongs to a one parameter exponential family with parameter $\bsg\et(W_{i})\in\R$. From the observation $\bsX=(X_{1},\ldots,X_{n})$, our aim is to estimate the conditional distributions of $Y_{i}$ given $W_{i}$. Our approach is based on an estimation of the potential $\bsg\et$ (may not exist). When such a $\bsg\et$ does exist, the above statistical setting includes as particular cases those of binary, Gaussian and Poisson regressions as well as exponential multiplicative regression. 

We are not aware of many results in the literature that tackle these regression problems and establish a risk bound for the proposed estimator $\widehat\bsg$ of $\bsg\et$. When the exponential family is given under its canonical form and $\sW=\cro{0,1}$, \cite{MR2158614} proposed a piecewise polynomial estimator $\widehat\bsg$ of $\bsg\et$ under the assumption that the natural parameter is a smooth function of the mean. They showed that their estimator achieves the usual rate $n^{-2\alpha/(1+2\alpha)}$ over the Besov spaces with regularity $\alpha>0$ up to a logarithmic factor with respect to the squared Hellinger loss. When the family is a univariate natural exponential family where the variances of the distributions are quadratic functions of the means, \cite{MR1897045} proposed a wavelet shrinkage method to estimate $\bsg\et$ while \cite{brown2010} parametrized the exponential family by its mean and transformed the problem of estimating $\bsg\et$ into homoscedastic Gaussian regression estimation by stabilizing the variance. Under the assumption that $\bsg\et$ belongs to the Besov spaces and is bounded from both below and above, \cite{brown2010} showed their estimator achieves the converge rate $n^{-2\alpha/(1+2\alpha)}$ with respect to the squared $\L_{2}$-loss. All of the literatures mentioned above make strong assumptions on the distributions of the covariates $W_{i}$ where they require them to be known (for example deterministic) or partly known. Moreover, none of them consider the problem of estimation under a possible misspecification framework.

Recently, \cite{Baraud2020} proposed a robust procedure based on the $\rho$-estimation to estimate $\bsg\et$. Their approach is restricted to the case of a single model. Up to a numerical constant, the risk of their estimator $\widehat\bsg$ on the model is bounded by the sum of an approximation term and a complexity term. Such an estimation procedure is satisfactory if we know in advance a suitable model for $\bsg\et$, i.e.\ a model which is not too complex and provides a good enough approximation of $\bsg\et$. However, such a model may not be easy to design without any prior information and a safer approach is to consider a family of candidate models instead and let the data decide which is the most appropriate one for estimating $\bsg\et$.

\subsection{Our contributions}
In this paper, we consider estimating the conditional distributions $R_{i}\et(W_{i})$ of $Y_{i}$ given $W_{i}$ by model selection. Our main contributions are as follows.
\begin{enumerate}[label=(\roman*)]
  \item We propose a model selection procedure to estimate the conditional distributions and establish a non-asymptotic risk bound for the resulted estimator. Our approach is based on the presumption that there exists an unknown $\bsg\et$ on $\sW$ belonging to some of our models such that $R_{i}\et(W_{i})$ is of the form $R_{\bsg\et(W_{i})}$ for all $i\in\{1,\ldots,n\}$. However, our approach is not restricted to this assumption. This is to say, we allow our statistical models to be slightly misspecified: $R_{i}\et(W_{i})$ may not be exactly of the form $R_{\bsg\et(W_{i})}$ and even if they were, $\bsg\et$ may not belong to any of our models. What we really assume is the form $R_{\bsg(W_{i})}$ with some $\bsg$ belonging to our models provides a suitable approximation of the conditional distributions.  
  \item When $X_{1},\ldots,X_{n}$ are i.i.d., this model selection procedure solves adaptation and variable selection problems in exponential families.
  \item In i.i.d. case, when the dimensionality $d$ of covariate $W$ is large, the converge rate of estimating $\bsg\et$ can be extremely slow which is, as a well-known phenomenon, called the curse of dimensionality. When $\bsg\et$ has some particular structures or at least close to some function with such features, we consider model selection problems based on the composite piecewise polynomials and ReLU neural networks and show that the resulted estimators by our procedure based on such models can circumvent the curse of dimensionality. The structures discussed in the paper includes genaralized additive structure, multiple index structure and multiple composition structure. 
  \item In i.i.d. case, when $\bsg\et$ belongs to the Takagi class we provide an example where estimation based on ReLU neural networks results in an estimator converging to $\bsg\et$ with parametric rate although $\bsg\et$ has very little smoothness. At least for such an example, neural networks outperform all the other traditional approximation methods, e.g. piecewise polynomials and wavelets. 
  \item We construct models to approximate general additive and multiple index functions and derive VC dimension bounds for them. Besides, we adapt the VC dimension result of ReLU neural networks to the sparse setting. These results can be of independent interest for readers. 
\end{enumerate}  

The paper is organized as follows. We introduce the specific statistical framework and set notations in Section~\ref{sect-2}. An estimation procedure based on model selection is proposed in Section~\ref{sect-3} together with the non-asymptotic exponential deviation inequalities. We then discuss the adaptive estimation problem in exponential families when the regression function belongs to anisotropic Besov spaces as an application in Section~\ref{aniso-besov}. We show that under a suitable parametrization of exponential families, our estimator is adaptive over a wide range of the anisotropic Besov spaces with the risk bound independent of choice of the exponential family. In Section~\ref{composite}, we consider the applications of our procedure to two examples of the structural assumptions, general additive functions and multiple index functions, to circumvent the curse of dimensionality. Estimation by model selection based on ReLU neural networks is discussed in Section~\ref{neural} and variable selection problem in generalized linear models is considered in Section~\ref{variable-selection}. Finally, all the proofs of this paper can be found in the appendix.

\section{The statistical setting}\label{sect-2}
As already mentioned, we observe $n$ independent pairs of random variables $X_{1}=(W_{1},Y_{1}),\ldots,X_{n}=(W_{n},Y_{n})$ with values in a measurable product space $(\sX,\cX)=(\sW\times\sY,\cW\otimes\cY)$. We assume that for each $i\in\{1,\ldots,n\}$, the conditional distribution of $Y_{i}$ given $W_{i}$ is given by $R_{i}\et(W_{i})$, where $R_{i}\et$ is a measurable function from $(\sW,\cW)$ to the set of all probabilities on $(\sY,\cY)$ equipped with the Borel $\sigma$-algebra $\sT$ associated to the Hellinger distance. We denote $\gR\et$ by $n$-tuple $(R_{1}\et,\ldots,R_{n}\et)$. 

Let $I$ be a non-trivial interval of $\R$, i.e. the interior of $I$ is not empty and $\widetilde\sQ=\{R_{\gamma}=\overline r_{\gamma}\cdot\mu,\; \gamma\in I\}$ be an exponential family under its general form on the measured space $(\sY,\cY,\mu)$. More precisely, $\widetilde\sQ$ is a family of probabilities on $(\sY,\cY)$ admitting densities $\overline r_{\gamma}$ with respect to $\mu$ of the form, for all $y\in\sY$ and $\gamma\in I$
\begin{equation}\label{gen-den}
\overline r_{\gamma}(y)=e^{u(\gamma)T(y)-B(\gamma)}a(y)\text{\ where\ }B(\gamma)=\log\cro{\int_{\sY}e^{u(\gamma)T(y)}a(y)d\mu(y)},
\end{equation}
$T$ is a real-valued measurable function on $(\sY,\cY)$ which does not coincide with a constant $\nu=a\cdot\mu$-a.e., $u$ is a continuous, strictly monotone function on $I$ and $a$ is a nonnegative function on $\sY$. For convenience, we denote
 \begin{equation}\label{gen-den-2}
 r_{\gamma}(y)=e^{u(\gamma)T(y)-B(\gamma)}, \text{\quad for all\ }y\in\sY \text{\ and\ }\gamma\in I,
 \end{equation} 
 and rewrite $\widetilde\sQ=\{R_{\gamma}=r_{\gamma}\cdot\nu,\; \gamma\in I\}$.

Our estimator takes the form of a mapping $\gR_{\widehat\bsg}:\gw=({\bm{w}}_{1},\ldots,{\bm{w}}_{n})\in\sW^{n}\mapsto (R_{\widehat\bsg({\bm{w}}_{1})},\ldots,R_{\widehat\bsg({\bm{w}}_{n})})$ with values in $\widetilde\sQ^{n}$, where $\widehat \bsg$ is a (random) function from $\sW$ into $I$. When the mapping
$\gR\et$ is also of this form, i.e. $\gR\et=\gR_{\bsg\et}$ for some (deterministic) function $\bsg\et:\sW\to I$, $\widehat\bsg$ provides an estimator of the so called {\em regression function}.

In order to evaluate the performance of our estimator $\gR_{\widehat\bsg}$ of $\gR\et$, we introduce a loss function based on the Hellinger distance. Given $P$ and $Q$ two probabilities dominated by some reference measure $\mu$ on a measurable space $(A,\cA)$, the Hellinger distance between $P=p\cdot\mu$ and $Q=q\cdot\mu$ is given by
\begin{equation}\label{hellinger}
h(P,Q)=\cro{\frac{1}{2}\int_{A}\pa{\sqrt{p}-\sqrt{q}}^{2}d\mu}^{1/2}.
\end{equation}
We remark that \eref{hellinger} does not depend on the choice of $\mu$. We denote $\sQ_{\sW}$ as the set of all measurable mappings from $(\sW,\cW)$ to the space of probabilities on $(\sY,\cY)$ equipped with the topology $\sT$ and define $\sbQ_{\sW}=\sQ_{\sW}^{n}$. Therefore,  both $\gR_{\widehat\bsg}$ and $\gR\et=(R_{1}\et,\ldots,R_{n}\et)$ belong to $\sbQ_{\sW}$. We endow the space $\sbQ_{\sW}$ with the pseudo Hellinger distance $\gh$ defined for $\gR=(R_{1},\ldots,R_{n})$ and $\gR'=(R'_{1},\ldots,R'_{n})$ in $\sbQ_{\sW}$ by
\begin{align}
\gh^{2}(\gR,\gR')&=\E\cro{\sum_{i=1}^{n}h^{2}\pa{R_{i}(W_{i}),R'_{i}(W_{i})}}\label{pseudo-h}\\
&=\sum_{i=1}^{n}\int_{\sW}h^{2}\pa{R_{i}(w),R'_{i}(w)}dP_{W_{i}}(w)\nonumber,
\end{align}
where $h$ is Hellinger distance defined in \eref{hellinger}. We evaluate the performance of the estimator $\gR_{\widehat\bsg}$ of $\gR\et$ by the quantity $\gh^{2}(\gR\et, \gR_{\widehat\bsg})$. In the favourable situation where $\bsg\et$ does exist, we automatically deduce a performance of $\widehat\bsg$ with respect to $\bsg\et$ by the distance $d(\bsg\et,\widehat\bsg)=\gh(\gR_{\bsg\et},\gR_{\widehat\bsg})$. 

When $W_{i}$ are i.i.d. with the common distribution $P_{W}$ and $R_{i}\et=R\et$ for all $i\in\{1,\ldots,n\}$, we slightly abuse the notation $h^{2}(R\et, R_{\widehat\bsg})$ to measure the distance between $R\et$ and $R_{\widehat\bsg}$ which is defined as
\[
h^{2}(R\et, R_{\widehat\bsg})=\frac{1}{n}\gh^{2}(\gR\et, \gR_{\widehat\bsg})=\int_{\sW}h^{2}(R\et(w), R_{\widehat\bsg(w)})dP_{W}(w).
\]

We end this section by introducing some notations for later use. We denote $\N^{*}$ the set of all positive natural numbers, $\R_{+}$ the set of all non-negative real numbers and $\R_{+}^{*}$ the set of all positive real numbers. For a set $m$, we use $|m|$ to denote its cardinality. By $(x)_{+}$, we mean the function $\max\{0,x\}$. We denote $x\vee y$ the largest value among $\left\{x,y\right\}$ while $x\wedge y$ is the smallest. We use the notation $\lfloor x\rfloor$ for any $x\in\R$ to denote the largest integer strictly smaller than $x$. For a $\gR\in\sbQ_{\sW}$ and a set $\gA\subset\sbQ_{\sW}$, we define $\gh^{2}(\gR,\gA)=\inf_{\gR'\in\gA}\gh^{2}(\gR,\gR')$. Unless otherwise specified, $\log$ denotes the logarithm function with base $e$. Let $(A,\cA)$ be a measurable space and $\mu$ be a $\sigma$-finite measure on $(A,\cA)$. For $k\in\cro{1,+\infty}$, we define $\cL_{k}(A,\mu)$ the collection of all the measurable functions $f$ on $(A,\cA,\mu)$ such that $\|f\|_{k,\mu}<+\infty$, where
$$\|f\|_{k,\mu}=\left(\int_{A}|f|^{k}d\mu\right)^{\frac{1}{k}},\mbox{\quad for\ }k\in[1,+\infty),$$
$$\|f\|_{\infty,\mu}=\inf\{K>0,\;|f|\leq K\;\mu-\mbox{a.e.}\},\mbox{\quad for\ }k=\infty.$$
We denote the associated equivalent classes as $\L_{k}(A,\mu)$ where any two functions coincide for $\mu$-a.e. can not be distinguished. In particular, we write the norm $\|\cdot\|_{k}$ with $k\in\cro{1,+\infty}$ when $\mu=\lambda$ is the Lebesgue measure. Throughout the paper, $C$ denotes positive numerical constant which may vary from line to line.

\section{Estimation based on model selection}\label{sect-3}
Our approach is based on $\rho$-estimation. For basic ideas that underline the construction of the $\rho$-estimator, we refer \cite{BarBir2018} and \cite{MR3595933}. 
\subsection{Main assumption}
Let $\cM$ be a finite or countable set. For each $m\in\cM$, $\bsG_{m}$ stands for a class of measurable functions from $\sW$ into $I$, which we call it {\em a model}.
We begin with an at most countable family $\{\bsG_{m},\;m\in\cM\}$ of classes and assume the following.
\begin{ass}\label{model-VC}
For any $m\in\cM$, $\bsG_{m}$ is VC-subgraph on $\sW$ with dimension not larger than $V_{m}\geq1$.
\end{ass}
For completeness, we recall the definition of VC-subgraph. An (open) subgraph of a function $\bsg$ in $\bsG_{m}$ is the subset of $\sW\times \R$ given by
\begin{equation*}
\sS_{\bsg}=\left\{(w,t)\in\sW\times\R,\;t<\bsg(w)\right\}.
\end{equation*}
A collection $\bsG_{m}$ of measurable functions on $\sW$ is VC-subgraph with dimension not larger than $V_{m}$ if, for any finite subset $\cS\subset\sW\times\R$ with $|\cS|=V_{m}+1$, there exists at least one subset $S$ of $\cS$ such that for any $\bsg\in\bsG_{m}$, $S$ is not the intersection of $\cS$ with $\sS_{\bsg}$, i.e.
\begin{equation*}
S\ne \cS\cap\sS_{\bsg}\quad \text{whatever $\bsg\in\bsG_{m}$.}
\end{equation*}
In particular, when $\bsG_{m}$ is contained in a linear space with finite dimension $d_{m}\geq1$ Assumption~\ref{model-VC} is fulfilled with $V_{m}=d_{m}+1$. Another property which can be derived from Lemma~2.6.18 of \cite{MR1385671} is that if $\bsG$ is VC-subgraph on a set $\sW$ with dimension $V$ and $a,b\in\R$ are fixed numbers, then the classes of functions $\bsG_{a}=\left\{\bsg\vee a,\;\bsg\in\bsG\right\}$ and $\bsG^{b}=\left\{\bsg\wedge b,\;\bsg\in\bsG\right\}$ are also VC-subgraphs on $\sW$ with dimension not larger than $V$. For more properties of the VC-subgraph class of functions, we refer the reader to Section~2.6.2 of \cite{MR1385671} and Section~8 of \cite{MR3595933}.

\subsection{Model selection procedure}\label{Mod-Sel}
We consider $\left\{\bsG_{m},\; m\in\cM\right\}$ an at most countable family of models satisfying Assumption~\ref{model-VC}. To avoid measurability issues, for any $m\in\cM$, we take $\gGamma_{m}$ a finite or countable subset of $\bsG_{m}$ and denote $\gGamma=\cup_{m\in\cM}\gGamma_{m}$. Let $\psi$ be the map defined on $[0,+\infty]$ as
\begin{equation}\label{def-psi}
\psi(x)=\left\{
\begin{aligned}
&\frac{x-1}{x+1}&,&\quad\mbox{$x\in[0,+\infty)$,}\\
&1&,&\quad \mbox{$x=+\infty$.}
\end{aligned}
\right.
\end{equation}
For any $\bsg, \bsg'\in\gGamma$, we define the $\gT$-statistic as
\begin{equation}\label{def-T}
\gT(\bsX,\bsg,\bsg')=\sum_{i=1}^{n}\psi\pa{\sqrt{{r_{\bsg'(W_{i})}(Y_{i})}\over {r_{\bsg(W_{i})}(Y_{i})}}}
\end{equation}
with the conventions $0/0=1$ and $c/0=+\infty$ for all $c>0$. 

Let $\Delta$ be a map from $\cM$ to $\R_{+}$. For each $m\in\cM$, we associate it with a nonnegative weight $\Delta(m)$ which satisfies 
\begin{equation}\label{def-weight}
\Sigma=\sum_{m\in\cM}e^{-\Delta(m)}<+\infty.
\end{equation}
In particular, when $\Sigma=1$, this gives a Bayesian flavour to our procedure by regarding $\Delta(m)$ as a prior distribution on the family $\{\gGamma_{m},\;m\in\cM\}$. 

Let $D_{n}$ be a map from $\cM$ to $\R_{+}$ defined as, for any $m\in\cM$, $$D_{n}(m)=10^{3}V_{m}\cro{9.11+\log_{+}\left(\frac{n}{V_{m}}\right)},$$ where $V_{m}$ stands for the VC dimension of the class $\bsG_{m}$. We define the penalty function from $\gGamma$ to $\R_{+}$ as
\begin{equation}\label{penalty}
\gpen(\bsg)=10^{2}\inf_{\ac{m\in\cM|\bsg\in\gGamma_{m}}}\cro{D_{n}(m)+4.7\Delta(m)},\mbox{\quad for all\ } \bsg\in\gGamma.
\end{equation}
For all $\bsg\in\gGamma$, we set
\begin{equation}\label{def-gupsele}
\gup(\bsX,\bsg)=\sup_{\bsg'\in\gGamma}\cro{\gT(\bsX,\bsg,\bsg')-\gpen(\bsg')}+\gpen(\bsg).
\end{equation}
We define $\widehat\bsg=\widehat\bsg(\bsX)$ as any measurable element of the random (and non-void) set 
\begin{equation}\label{def-sE-1}
\cE(\bsX)=\ac{\bsg\in \gGamma\;\mbox{ such that }\; \gup(\bsX,\bsg)\leq \inf_{\bsg'\in\gGamma}\gup(\bsX,\bsg')+1}.
\end{equation}
Finally, $\gR_{\widehat\bsg}=(R_{\widehat\bsg},\ldots,R_{\widehat\bsg})$ is our estimator for $\gR\et$. 

We comment that the number 1 in \eref{def-sE-1} does not play any role. Any small number $\delta>0$ can work for defining our estimator. We hereby choose $\delta=1$ just for presenting our results in a simple way. 

As one can observe from the construction procedure, our estimator depends on the choice of the exponential family in \eref{def-T}, the countable subsets $\gGamma_{m}$ of $\bsG_{m}$ and the weights $\Delta(m)$ we choose. However, we do not require any information for the distributions of covariates $W_{i}$ which, therefore, could be unknown. This is one of the feature distinguishing our procedure with the ones \cite{MR1897045}, \cite{brown2010} and \cite{MR2158614} in the literature.

\subsection{The performance of the estimator}\label{per-bsg}
\begin{thm}\label{thm-1}
Let $\sbQ_{m}=\{\gR_{\bsg},\;\bsg\in\gGamma_{m}\}$ and $\Xi(m)=D_{n}(m)/4.7+\Delta(m)$, for all $m\in\cM$. Under Assumption~\ref{model-VC}, whatever the conditional probabilities $\gR\et=(R_{1}\et,\ldots,R_{n}\et)$ of $Y_{i}$ given $W_{i}$ and the distributions of $W_{i}$, the estimator $\gR_{\widehat\bsg}$ obtained by our model selection procedure in Section~\ref{Mod-Sel} satisfies for any $\xi>0$, with a probability at least $1-\Sigma e^{-\xi}$
\begin{equation}\label{eq-3mod}
\gh^{2}(\gR\et,\gR_{\widehat\bsg})\leq\inf_{m\in\cM}\cro{c_{1}\gh^{2}(\gR\et,\sbQ_{m})+c_{2}\left(\Xi(m)+1.49+\xi\right)},
\end{equation}
where $c_{1}=149.8$ and $c_{2}=5013.2$. \\
\end{thm}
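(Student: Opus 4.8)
The plan is to run the $\rho$-estimation machinery of \cite{BarBir2018} and \cite{MR3595933} in the present conditional (regression) setting, the only genuinely new layer being the model-selection penalty \eref{penalty}. The engine is to convert the optimization \eref{def-gupsele}--\eref{def-sE-1} defining $\widehat\bsg$ into a comparison of Hellinger-type distances, valid up to a centered empirical process that Assumption~\ref{model-VC} lets me control; crucially, everything is phrased through $\gh^2(\gR\et,\sbQ_m)$ and $\gR\et$ directly, so that no exact parametrization $\gR\et=\gR_{\bsg\et}$ is needed and misspecification is covered for free. I would first record the three elementary properties of $\psi$ from \eref{def-psi}: it is nondecreasing, $|\psi|\le1$, and $\psi(1/x)=-\psi(x)$, the last giving the antisymmetry $\gT(\bsX,\bsg,\bsg')=-\gT(\bsX,\bsg',\bsg)$. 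Fixing an arbitrary reference $\bsg\in\gGamma$, inserting $\bsg'=\bsg$ into the supremum inside $\gup(\bsX,\widehat\bsg)$ and using $\widehat\bsg\in\cE(\bsX)$ together with $\gup(\bsX,\widehat\bsg)\le\gup(\bsX,\bsg)+1$, I obtain the deterministic inequality
\begin{equation*}
\gT(\bsX,\widehat\bsg,\bsg)+\gpen(\widehat\bsg)\le\sup_{\bsg'\in\gGamma}\cro{\gT(\bsX,\bsg,\bsg')-\gpen(\bsg')}+2\gpen(\bsg)+1.
\end{equation*}

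The next step is the deterministic \emph{expected-contrast} estimate, the analogue of the key lemma of $\rho$-estimation: using the properties of $\psi$ and the algebra of the Hellinger affinity, there are positive numerical constants $a_0,a_1$ such that, for all $\bsg,\bsg'\in\gGamma$,
\begin{equation*}
\E\cro{\gT(\bsX,\bsg,\bsg')}\le a_0\,\gh^2(\gR\et,\gR_{\bsg})-a_1\,\gh^2(\gR\et,\gR_{\bsg'}).
\end{equation*}
By antisymmetry this makes $\E[\gT(\bsX,\widehat\bsg,\bsg)]$ large precisely when $\gR_{\widehat\bsg}$ is far from $\gR\et$, which is how the left-hand side of the deterministic inequality will force an upper bound on $\gh^2(\gR\et,\gR_{\widehat\bsg})$, while the supremum on the right stays small once $\bsg$ is taken close to $\gR\et$.

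It then remains to write $\gT=\E[\gT]+Z$ with the centered fluctuation $Z(\bsg,\bsg')=\gT(\bsX,\bsg,\bsg')-\E[\gT(\bsX,\bsg,\bsg')]$ and to dominate $Z$ uniformly over each model. Since $|\psi|\le1$, $Z(\bsg,\bsg')$ is a sum of $n$ independent bounded centered terms, so a Talagrand-type concentration inequality applies; the expectation of $\sup_{\bsg'\in\gGamma_m}Z(\bsg,\bsg')$ is where Assumption~\ref{model-VC} enters. Using that a VC-subgraph class of dimension $V_m$ has uniform metric entropy $\log N(\eps)\lesssim V_m\log(1/\eps)$, a chaining (Dudley) bound with a variance normalization matching the Hellinger scale yields a control of order $V_m\cro{\mathrm{const}+\log_+(n/V_m)}$, which is exactly $D_n(m)$ up to the factor $10^3$. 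The penalty is then calibrated to play a \emph{dual} role: the term $\gpen(\bsg')$ inside the supremum cancels the fluctuation of every candidate $\bsg'$, while $\gpen(\widehat\bsg)$ on the left cancels the fluctuation attached to the \emph{random} model that is selected, and the weights $\Delta(m)$ with $\Sigma=\sum_m e^{-\Delta(m)}$ pay for the union bound over $\cM$.

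Assembling the pieces on the event where the fluctuations are dominated by $\gpen$ simultaneously over all models — an event of probability at least $1-\Sigma e^{-\xi}$ after inflating the deviation by $\xi$ — the deterministic inequality combined with the expected-contrast estimate gives a bound of the shape $a_1\,\gh^2(\gR\et,\gR_{\widehat\bsg})\le 2a_0\,\gh^2(\gR\et,\gR_{\bsg})+2\gpen(\bsg)+C(\xi)$; choosing $\bsg\in\gGamma_m$ to nearly realize $\gh^2(\gR\et,\sbQ_m)$ and then optimizing over $m\in\cM$ produces \eref{eq-3mod}, the constants $c_1=149.8$ and $c_2=5013.2$ emerging from the numerical bookkeeping through $a_0,a_1$, the chaining constant hidden in $D_n(m)$, and the concentration constants (note that $\gpen(\bsg)=470\inf_{m}\Xi(m)$, which matches the $\Xi(m)$ appearing in the bound). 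I expect the \textbf{main obstacle} to be precisely the VC-based control of $\E[\sup_{\bsg'\in\gGamma_m}Z]$ with the correct variance weighting: accounting for the fact that the relevant variance scales with the Hellinger distance requires a peeling/weighting argument, and it is this step that both invokes Assumption~\ref{model-VC} and fixes the explicit form and constant of $D_n(m)$; by contrast, the $\psi$-algebra, the deterministic inequality, and the final concentration are comparatively routine.
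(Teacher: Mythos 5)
Your overall architecture is the right one and, in spirit, coincides with the paper's: the paper does not re-derive the $\psi$-contrast machinery but invokes Theorem~2 of \cite{BarBir2018} wholesale, after (i) identifying the joint laws $P_{i}\et=R_{i}\et\cdot P_{W_{i}}$ and $P_{i,\bsg}=R_{\bsg}\cdot P_{W_{i}}$ so that $\gh^{2}(\gR\et,\gR_{\bsg})=\gh^{2}(\gP\et,\gP_{\bsg})$, and (ii) verifying that the $\rho$-dimension of each model $\sbP_{m}$ is bounded by $D_{n}(m)$. Everything you describe --- the antisymmetry of $\gT$, the deterministic inequality extracted from the definition of $\cE(\bsX)$, the expected-contrast bound $\E[\gT]\le a_{0}\gh^{2}(\gR\et,\gR_{\bsg})-a_{1}\gh^{2}(\gR\et,\gR_{\bsg'})$, the peeled/weighted control of the centered process, and the dual role of the penalty --- is already the content of that cited theorem, so reproducing it is not where the effort of this particular proof lies.

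The genuine gap is in the one step you treat as automatic: you say ``Assumption~\ref{model-VC} enters'' through the metric entropy of the class indexed by $\bsg'$. But Assumption~\ref{model-VC} states that $\bsG_{m}$ is VC-subgraph \emph{on $\sW$}, whereas the empirical process runs over the functions $(w,y)\mapsto\psi\bigl(\sqrt{r_{\bsg'(w)}(y)/r_{\bsg(w)}(y)}\bigr)$ \emph{on $\sW\times\sY$}, and there is no general principle transferring a VC bound on $\{\bsg'\}$ to one on the induced likelihood ratios. That transfer is precisely the part of the proof that is specific to this paper (Propositions~\ref{general-vc} and~\ref{general-rho}): one reparametrizes the family in natural form, uses that $u$ is continuous and strictly monotone so that $\bsT_{m}=\{u\circ\bsg,\;\bsg\in\bsG_{m}\}$ is VC-subgraph with dimension at most $V_{m}$ (Proposition~42-(ii) of \cite{MR3595933}), and then invokes Proposition~5 of \cite{Baraud2020}, which exploits the exponential-family form $e^{T(y)\bst(w)-A(\bst(w))}$ to show that the density class $\cR_{m}$ on $\sW\times\sY$ is VC-subgraph with dimension at most $9.41V_{m}$; this is what yields the $\rho$-dimension bound $D^{\sbP_{m}}(\gP\et,\overline\gP)\le D_{n}(m)$ and fixes the explicit constants. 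Without this step your chaining argument has no entropy input, so you must either prove this transfer or cite it. A second, minor, omission: the observations are independent but not identically distributed, so the deviation inequality must be the non-i.i.d. version (which Theorem~2 of \cite{BarBir2018} supplies); your sketch implicitly works with a single empirical measure.
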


The proof of Theorem~\ref{thm-1} is postponed to Appendix A. We shall use \eref{eq-3mod} in the forthcoming sections to solve many model selection problems simultaneously. We need to point out that the result we present here is different with the work in Section~8 of \cite{BarBir2018}, where they assumed the pairs of random variables $X_{i}=(W_{i},Y_{i})$, $i\in\{1,\ldots,n\}$ are i.i.d.. Besides, we study the regression problem in exponential families so that it is more natural to put assumption on the models $\bsG_{m}$ of the potential regression function $\bsg\et$. In this statistical setting, additional work has to be done to understand the performance of the $\rho$-estimator.

We give some comments on our result here. The numerical constants $c_{1}$ and $c_{2}$ are independent of the choice of the exponential family. For all $m\in\cM$, let us set $\overline\sbQ_{m}=\{\gR_{\bsg},\;\bsg\in\bsG_{m}\}$. If for all $m\in\cM$, $\sbQ_{m}$ is dense in $\overline\sbQ_{m}$ with respect to the pseudo Hellinger distance $\gh$, i.e.\ $\gh(\gR\et,\overline\sbQ_{m})=\gh(\gR\et,\sbQ_{m})$, (\ref{eq-3mod}) is equivalent to $$\gh^{2}(\gR\et,\gR_{\widehat\bsg})\leq\inf_{m\in\cM}\cro{c_{1}\gh^{2}(\gR\et,\overline\sbQ_{m})+c_{2}\left(\Xi(m)+1.49+\xi\right)},$$ where we involve the models $\bsG_{m}$ into the risk bound of our estimator but not its countable subset $\gGamma_{m}$ as we derived in \eref{eq-3mod}. As it was discussed in Section~4.2 of \cite{BarBir2018}, this is exact the case when $\gGamma_{m}$ is a dense subset of $\bsG_{m}$ for the topology of pointwise convergence for all $m\in\cM$. 

An integration of \eref{eq-3mod} with respect to $\xi$ leads to
\begin{equation}\label{expectation-bound}
\E\cro{\gh^{2}(\gR\et,\gR_{\widehat\bsg})}\leq \inf_{m\in\cM}\cro{c_{1}\gh^{2}(\gR\et,\sbQ_{m})+c_{2}\left(\Xi(m)+\Sigma+1.49\right)}.
\end{equation}
We note from \eref{expectation-bound} that the risk of the estimator $\gR_{\widehat\bsg}$ is bounded, up to a constant depending on $\Sigma$, by the infimum over the whole family $\cM$ of the quantity summing up the distance from each $\sbQ_{m}$ to $\gR\et$, the complexity of each $\bsG_{m}$ (up to a logarithmic factor) and the associated weight $\Delta(m)$. The magnitude of the bias term and the complexity term is of the optimal order so that if for all $m\in\cM$, the weight function $\Delta(m)$ is chosen to be not larger than $V_{m}$ (up to a logarithmic factor), we are able to select the model achieving the best trade-off between approximation and model's complexity among the collection $\cM$. 

Moreover, the bias term $\gh(\gR\et,\sbQ_{m})$ in \eref{expectation-bound} accounts for the robustness property of our estimator with respect to the possible model misspecification and data contamination. To illustrate it simply, let us focus on each single $\gGamma_{m}$ and assume the weight $\Delta(m)$ has been assigned such that $\Delta(m)\lesssim D_{n}(m)$. If $\gGamma_{m}$ is exact, i.e. $\gR\et=\gR_{\bsg\et}$ with $\bsg\et\in\gGamma_{m}$, up to a constant, the risk of the estimator $\gR_{\widehat\bsg}$ will be smaller than $V_{m}\cro{1+\log_{+}\left(n/V_{m}\right)}$. If it is not the case, the risk involves an additional bias term $\gh^{2}(\gR\et,\sbQ_{m})$ due to a potential model misspecification or data contamination. However, as long as this bias term remains small compared to $V_{m}\cro{1+\log_{+}\left(n/V_{m}\right)}$, the performance of our estimator will not deteriorate much as the case when $\gGamma_{m}$ is exact.

In the situation where the covariates $W_{i}$ are truly i.i.d. with a common distribution $P_{W}$ and $R\et_{i}=R\et$ for all $i\in\{1,\ldots,n\}$, we deduce from \eref{expectation-bound} that for any $R\et$ and $P_{W}$, our estimator $R_{\widehat\bsg}$ satisfies
\begin{equation}\label{iid-bound}
\E\cro{h^{2}(R\et,R_{\widehat\bsg})}\leq c_{2}\left(c_{3}+\Sigma\right)\inf_{m\in\cM}\cro{h^{2}(R\et,\sQ_{m})+\frac{\Delta(m)}{n}+\frac{V_{m}}{n}L_{n}(m)},
\end{equation}
where $c_{3}=1939.8$, $\sQ_{m}=\{R_{\bsg},\;\bsg\in\gGamma_{m}\}$ and $L_{n}(m)=1+\log_{+}\left(n/V_{m}\right)$.

\section{Adaptation to anisotropic Besov spaces}\label{aniso-besov}
In this section, we assume the covariates $W_{i}$ are truly i.i.d. on $\sW=\cro{0,1}^{d}$, $d\geq1$ with a common distribution $P_{W}$ and $R_{i}\et=R\et$ for all $i\in\{1,\ldots, n\}$ and consider adaptive estimation in exponential families. The problem is stated as follows.

Let $0<p,q\leq\infty$, ${\bm{\alpha}}=(\alpha_{1},\ldots,\alpha_{d})\in(\R_{+}^{*})^{d}$ and $R\in\R_{+}^{*}$. We denote $B_{p,q}^{{\bm{\alpha}}}(\cro{0,1}^{d},R)$ as the anisotropic Besov ball which gathers all the functions $f$ in the anisotropic Besov space $B_{p,q}^{{\bm{\alpha}}}(\cro{0,1}^{d})$ with (quasi-) semi-norm $|f|_{{\bm{\alpha}},p,q}<R$. Including H\"older and Sobolev spaces, Besov space is a considerable general function space. It can also capture the spatial inhomogeneity of the smoothness property as discussed by \cite{suzuki2019deep}. For readers who concern the definitions, we refer Chapter~5 of \cite{book} and \cite{MR1884234} which gives a detailed introduction restricted to $d=2$ but can be generalized easily. Similarly to the isotropic case, the $d$-dimensional parameter ${\bm{\alpha}}$ indicates the smooth property in each direction $j\in\{1,\ldots,d\}$. More precisely, for all functions $f\in B_{p,q}^{{\bm{\alpha}}}(\cro{0,1}^{d})$, if $\alpha_{j}$ is large, then $f$ is smooth to the $j$-th direction.

For a given interval $\cro{v_{-},v_{+}}\subset I$ with $v_{-}<v_{+}$, the notation $B_{p,q}^{{\bm{\alpha}}}(R,v_{-},v_{+})$ stands for the collection of functions $f\in B_{p,q}^{{\bm{\alpha}}}(\cro{0,1}^{d},R)$ with $f({\bs{w}})\in\cro{v_{-},v_{+}}$ for all ${\bs{w}}\in\cro{0,1}^{d}$. We assume that the regression function $\bsg\et\in B_{p,q}^{{\bm{\alpha}}}(R,v_{-},v_{+})$. Our aim, in this section, is to design a specific procedure for estimating this $\bsg\et$ without assuming the parameters ${\bm{\alpha}}$, $p$ and $R$ to be known.

\subsection{Models construction}\label{model-construction}
We begin with introducing the conception of hyperrectangle.  Given $s_{j}\in\N$, $1\leq j\leq d$, for any $k_{j}\in\Psi(s_{j})=\{0,\ldots,2^{s_{j}}-1\}$, we set 
\begin{equation}\label{besov-hyper}
I_{j}(k_{j})=\left\{
\begin{aligned}
&\cro{0,2^{-s_{j}}}&, &\quad k_{j}=0,\\
&\ (k_{j}2^{-s_{j}},(k_{j}+1)2^{-s_{j}}]&, &\quad k_{j}=1,\ldots,2^{s_{j}}-1.
\end{aligned}
\right.
\end{equation}
We call a hyperrectangle by any subset of $\cro{0,1}^{d}$ of the form $\prod_{j=1}^{d}I_{j}(k_{j})$. Given a vector ${\bs{s}}=(s_{1},\ldots,s_{d})\in\N^{d}$, we denote $M^{\cB,d}_{\bs{s}}$ the resulted partition of $\cro{0,1}^{d}$ into the union of hyperrectangles $\cup_{(k_{1},\ldots,k_{d})\in\Psi(s_{1})\times\cdots\times\Psi(s_{d})}\prod_{j=1}^{d}I_{j}(k_{j})$. 

We take $\cM=\N^{d}\times\N$. Given $({\bs{s}},r)\in\cM$, we define $\overline\bsS^{\cB,d}_{({\bs{s}},r)}$ as the space of piecewise polynomial functions on $\cro{0,1}^{d}$, where on each hyperrectangle $\prod_{j=1}^{d}I_{j}(k_{j})$, $\bsg\in\overline\bsS^{\cB,d}_{({\bs{s}},r)}$ is a polynomial in $d$ variables of degree at most $r$ for each variable. This is to say given $({\bs{s}},r)\in\cM$, for any $(\overline k_{1},\ldots,\overline k_{d})\in\Psi(s_{1})\times\cdots\times\Psi(s_{d})$, any $\bsg\in\overline\bsS^{\cB,d}_{({\bs{s}},r)}$ is of the form for all ${\bs{w}}=(w_{1},\ldots,w_{d})\in\prod_{j=1}^{d}I_{j}(\overline k_{j})$
\begin{equation}\label{polynomial-base}
\bsg({\bs{w}})=\sum_{(r_{1},\ldots,r_{d})\in\{0,\ldots,r\}^{d}}\gamma_{(r_{1},\ldots,r_{d})}\prod_{j=1}^{d}{w_{j}}^{r_{j}},
\end{equation}
where $\gamma_{(r_{1},\ldots,r_{d})}\in\R$, for all $0\leq r_{j}\leq r$, $1\leq j\leq d$. 

Recall that in our setting $\bsg\et$ takes values in some non-trivial interval $I$ which may vary from the choice of the exponential family and the choice of parametrization. A few examples are given in Section~2.1 of \cite{Baraud2020} when the exponential family is parametrized in its natural form. To estimate $\bsg\et$, we assume that we have a prior information of $v_{-},v_{+}\in\R$ such that the regression function $\bsg\et$ with values in $\cro{v_{-},v_{+}}\subset I$. For each $({\bs{s}},r)\in\cM$, we define $\bsG^{\cB,d}_{({\bs{s}},r)}=\left\{(\bsg\vee v_{-})\wedge v_{+},\;\bsg\in\overline\bsS^{\cB,d}_{({\bs{s}},r)}\right\}$ and the family of models is given by $\left\{\bsG^{\cB,d}_{({\bs{s}},r)},\;({\bs{s}},r)\in\cM\right\}$. For each $\overline\bsS_{({\bs{s}},r)}^{\cB,d}$, we take its countable subset $\bsS_{({\bs{s}},r)}^{\cB,d}$ as the collection of functions of the same form in \eref{polynomial-base} apart from restricting $\gamma_{(r_{1},\ldots,r_{d})}\in\Q$, for all $0\leq r_{j}\leq r$, $1\leq j\leq d$ and define $\gGamma^{\cB,d}_{({\bs{s}},r)}=\left\{(\bsg\vee v_{-})\wedge v_{+},\;\bsg\in\bsS^{\cB,d}_{({\bs{s}},r)}\right\}$.
\begin{lem}\label{besov-uniformly}
For any $d\in\N^{*}$, $r\in\N$ and ${\bs{s}}\in\N^{d}$, $\bsS_{({\bs{s}},r)}^{\cB,d}$ is dense in $\overline\bsS_{({\bs{s}},r)}^{\cB,d}$ and $\gGamma_{({\bs{s}},r)}^{\cB,d}$ is dense in $\bsG_{({\bs{s}},r)}^{\cB,d}$ with respect to the supremum norm $\|\cdot\|_{\infty}$.
\end{lem}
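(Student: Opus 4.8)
The plan is to prove the two density assertions in turn: the first amounts to approximating real polynomial coefficients by rational ones, while the second will follow from the first once we observe that the truncation $x\mapsto(x\vee v_{-})\wedge v_{+}$ is nonexpansive. For the first assertion, fix $\bsg\in\overline\bsS^{\cB,d}_{({\bs{s}},r)}$ and $\eps>0$. The decisive point is finiteness: the partition $M^{\cB,d}_{\bs{s}}$ is the union of the finitely many hyperrectangles $\prod_{j=1}^{d}I_{j}(k_{j})$ indexed by the finite set $\Psi(s_{1})\times\cdots\times\Psi(s_{d})$, and on each of them $\bsg$ is prescribed by \eref{polynomial-base} through the finitely many real coefficients $\gamma_{(r_{1},\ldots,r_{d})}$, $(r_{1},\ldots,r_{d})\in\{0,\ldots,r\}^{d}$. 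Since every coordinate $w_{j}$ ranges in $\cro{0,1}$, each monomial obeys $\ab{\prod_{j=1}^{d}w_{j}^{r_{j}}}\le1$; consequently, at every point of a hyperrectangle the value of $\bsg$ depends linearly on its coefficient vector, with linear weights bounded by $1$ in absolute value. Using that $\Q$ is dense in $\R$, I would therefore replace each $\gamma_{(r_{1},\ldots,r_{d})}$ by a rational $\gamma'_{(r_{1},\ldots,r_{d})}$ with $\ab{\gamma_{(r_{1},\ldots,r_{d})}-\gamma'_{(r_{1},\ldots,r_{d})}}<\eps/(r+1)^{d}$; the resulting function $\bsg'\in\bsS^{\cB,d}_{({\bs{s}},r)}$ then differs from $\bsg$ by at most $(r+1)^{d}\cdot\eps/(r+1)^{d}=\eps$ at every point of that hyperrectangle. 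Carrying this out on each of the finitely many pieces and using that the supremum over $\cro{0,1}^{d}$ is the maximum of the suprema over the pieces of the partition yields $\norm{\bsg-\bsg'}_{\infty}\le\eps$, which is the claimed density.

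For the second assertion, let $T\colon\R\to\cro{v_{-},v_{+}}$ be the truncation $T(x)=(x\vee v_{-})\wedge v_{+}$. As a composition of the two nonexpansive maps $x\mapsto x\vee v_{-}$ and $x\mapsto x\wedge v_{+}$, $T$ is $1$-Lipschitz, that is $\ab{T(a)-T(b)}\le\ab{a-b}$ for all $a,b\in\R$. Given an element $(\bsg\vee v_{-})\wedge v_{+}\in\bsG^{\cB,d}_{({\bs{s}},r)}$ with $\bsg\in\overline\bsS^{\cB,d}_{({\bs{s}},r)}$, I would invoke the first part to choose $\bsg'\in\bsS^{\cB,d}_{({\bs{s}},r)}$ with $\norm{\bsg-\bsg'}_{\infty}<\eps$ and set $(\bsg'\vee v_{-})\wedge v_{+}\in\gGamma^{\cB,d}_{({\bs{s}},r)}$. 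Applying the Lipschitz bound pointwise gives $\ab{T(\bsg({\bs{w}}))-T(\bsg'({\bs{w}}))}\le\ab{\bsg({\bs{w}})-\bsg'({\bs{w}})}$ for every ${\bs{w}}\in\cro{0,1}^{d}$, so that $\norm{(\bsg\vee v_{-})\wedge v_{+}-(\bsg'\vee v_{-})\wedge v_{+}}_{\infty}\le\norm{\bsg-\bsg'}_{\infty}<\eps$. This establishes the density of $\gGamma^{\cB,d}_{({\bs{s}},r)}$ in $\bsG^{\cB,d}_{({\bs{s}},r)}$.

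I do not anticipate any genuine difficulty, since both claims are elementary; the only things to keep straight are the bookkeeping (finitely many hyperrectangles, each carrying $(r+1)^{d}$ coefficients, so that a single tolerance $\eps$ controls the sup-norm uniformly over the whole cube) and the nonexpansiveness of $T$. It is worth emphasising that the estimate $\ab{\prod_{j=1}^{d}w_{j}^{r_{j}}}\le1$, which is precisely what makes the passage from coefficients to sup-norm uniform, uses that we work over $\sW=\cro{0,1}^{d}$; over an unbounded domain the monomials would be unbounded and this simple argument would break down.
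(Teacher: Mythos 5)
Your proof is correct and follows essentially the same route as the paper's: approximate the finitely many real polynomial coefficients on each hyperrectangle by rationals within $\eps/(r+1)^{d}$, use that the monomials are bounded by $1$ on $\cro{0,1}^{d}$ to pass to the sup-norm, and then deduce the second density claim from the $1$-Lipschitz property of the truncation $x\mapsto(x\vee v_{-})\wedge v_{+}$. Your version is if anything slightly more explicit than the paper's about where the bound $\ab{\prod_{j=1}^{d}w_{j}^{r_{j}}}\le1$ enters.
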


For any $({\bs{s}},r)\in\cM$, since $M^{\cB,d}_{\bs{s}}$ is a partition of $\cro{0,1}^{d}$ with $\prod_{j=1}^{d}2^{s_{j}}$ hyperrectangles and on each hyperrectangle the space of functions is spanned by $(r+1)^{d}$ basis, $\overline\bsS_{({\bs{s}},r)}^{\cB,d}$ is a $(r+1)^{d}\prod_{j=1}^{d}2^{s_{j}}$ dimensional vector space. By the properties of VC-subgraph we introduced in Section~\ref{sect-3}, for any $({\bs{s}},r)\in\cM$, $\bsG^{\cB,d}_{({\bs{s}},r)}$ is a VC-subgraph on $\sW$ with dimension not lager than $(r+1)^{d}\prod_{j=1}^{d}2^{s_{j}}+1$ which fulfills Assumption~\ref{model-VC} with 
\begin{equation}\label{explicit-vc}
V_{({\bs{s}},r)}=(r+1)^{d}\prod_{j=1}^{d}2^{s_{j}}+1.
\end{equation}

For each $({\bs{s}},r)\in\cM$, we associate it with the weight 
\begin{equation}\label{anisotropic-weight-def}
\Delta({\bs{s}},r)=\log(8d)\prod_{j=1}^{d}2^{s_{j}}+r.
\end{equation} 
We have the following result which shows inequality \eref{def-weight} is satisfied with the weights defined by \eref{anisotropic-weight-def}.
\begin{lem}\label{anisotropic-weight}
For each $({\bs{s}},r)\in\cM$, let the weight be assigned by \eref{anisotropic-weight-def}. Then $$\sum_{({\bs{s}},r)\in\cM}e^{-\Delta({\bs{s}},r)}\leq\frac{e}{e-1}.$$
\end{lem}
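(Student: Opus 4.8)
The plan is to exploit the product structure of the weight $\Delta({\bs{s}},r)=\log(8d)\prod_{j=1}^{d}2^{s_{j}}+r$, which makes the exponential split into two independent factors. Writing $\cM=\N^{d}\times\N$ and abbreviating $S=s_{1}+\cdots+s_{d}$, we have $\prod_{j=1}^{d}2^{s_{j}}=2^{S}$, so that
\[
\sum_{({\bs{s}},r)\in\cM}e^{-\Delta({\bs{s}},r)}=\left(\sum_{{\bs{s}}\in\N^{d}}(8d)^{-2^{S}}\right)\left(\sum_{r\in\N}e^{-r}\right).
\]
The second factor is a plain geometric series with value $\sum_{r\ge0}e^{-r}=e/(e-1)$, which is exactly the target bound. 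Hence it suffices to prove that the first factor is at most $1$.

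To control the sum over ${\bs{s}}\in\N^{d}$, I would reorganise it according to the value of $S$. The number of $d$-tuples $(s_{1},\ldots,s_{d})\in\N^{d}$ with $s_{1}+\cdots+s_{d}=k$ is $\binom{k+d-1}{d-1}$, whence
\[
\sum_{{\bs{s}}\in\N^{d}}(8d)^{-2^{S}}=\sum_{k=0}^{\infty}\binom{k+d-1}{d-1}(8d)^{-2^{k}}.
\]
I would then bound the two ingredients separately: on the counting side, $\binom{k+d-1}{d-1}\le d^{k}$, since the number of size-$k$ multisets drawn from $d$ symbols is at most the number $d^{k}$ of length-$k$ words over those symbols; on the decay side, $2^{k}\ge k+1$ for every $k\ge0$, so that $(8d)^{-2^{k}}\le(8d)^{-(k+1)}$.

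The main point is that these two bounds fit together cleanly, because the counting factor $d^{k}$ is exactly neutralised by the decay: $d^{k}(8d)^{-(k+1)}=(8d)^{-1}\,8^{-k}$. Consequently
\[
\sum_{{\bs{s}}\in\N^{d}}(8d)^{-2^{S}}\le\frac{1}{8d}\sum_{k=0}^{\infty}8^{-k}=\frac{1}{8d}\cdot\frac{8}{7}=\frac{1}{7d}\le1,
\]
and multiplying by the geometric series in $r$ gives the claim. The only genuinely delicate step is the sum over ${\bs{s}}$, where the exponent carries the \emph{product} $\prod_{j}2^{s_{j}}=2^{S}$ rather than a linear function of $S$; the resolution is to trade this back for a linear exponent via $2^{k}\ge k+1$ and to observe that the polynomial blow-up $d^{k}$ of the number of tuples is absorbed precisely because $(8d)^{k}=8^{k}d^{k}$, leaving a convergent geometric series of ratio $1/8$.
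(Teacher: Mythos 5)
Your proposal is correct, and it takes a genuinely different route from the paper. The paper does not decompose the sum over ${\bs{s}}$ directly: instead it embeds the dyadic partitions $M^{\cB,d}_{\bs{s}}$ into the larger family $\cup_{D\in\N^{*}}M^{d}_{D}$ of all partitions of $\cro{0,1}^{d}$ into $D$ hyperrectangles, replaces the sum over ${\bs{s}}$ by a sum over such partitions, and invokes the combinatorial bound $|M^{d}_{D}|\le(4d)^{D}$ (from Akakpo's binary-tree argument), so that $\sum_{D\ge1}(4d)^{D}(8d)^{-D}=\sum_{D\ge1}2^{-D}\le1$; this also explains the seemingly ad hoc constant $\log(8d)$ in the weight. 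You instead stay with the tuples ${\bs{s}}\in\N^{d}$ themselves, group them by $S=s_{1}+\cdots+s_{d}=k$, count them by stars and bars via $\binom{k+d-1}{d-1}\le d^{k}$, and convert the doubly exponential decay back to geometric decay via $2^{k}\ge k+1$, after which $d^{k}(8d)^{-(k+1)}=(8d)^{-1}8^{-k}$ sums to $1/(7d)\le1$. Every step checks out (including the factorisation of the double sum and the two elementary inequalities), so your argument is a valid, fully self-contained replacement that avoids the external partition-counting lemma and in fact yields the slightly sharper bound $\frac{1}{7d}\cdot\frac{e}{e-1}$. The one thing the paper's detour buys is uniformity: the same embedding-into-$M^{d}_{D}$ device is reused in the proof of Lemma~\ref{add-inequality} and meshes with the approximation result (Proposition~\ref{appro-aniso}), which is stated for partitions $\pi\in M^{\cB,d}$ indexed by their cardinality $|\pi|$ rather than by ${\bs{s}}$.
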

We denote $M^{\cB,d}=\cup_{{\bs{s}}\in\N^{d}}M^{\cB,d}_{\bs{s}}$. Given a partition $\pi\in M^{\cB,d}$ without knowing the specific values of $(s_{1},\ldots,s_{d})$, sometimes it is useful to introduce an alternative notation $\bsG^{\cB,d}_{(\pi,r)}=\left\{(\bsg\vee v_{-})\wedge v_{+},\;\bsg\in\overline\bsS^{\cB,d}_{(\pi,r)}\right\}$ for $\bsG^{\cB,d}_{({\bs{s}},r)}$, where $\overline\bsS^{\cB,d}_{(\pi,r)}$ characterises the space of piecewise polynomial functions on $\cro{0,1}^{d}$ such that on each hyperrectangle of $\pi$, any $\bsg\in\overline\bsS^{\cB,d}_{(\pi,r)}$ is a polynomial in $d$ variables of degree not larger than $r$ for each variable. Similarly, the VC dimension bound for the class of functions $\bsG_{(\pi,r)}^{\cB,d}$ on $\sW$ is given by
\begin{equation}\label{implicit-vc}
V_{(\pi,r)}=(r+1)^{d}|\pi|+1,
\end{equation}
where $|\pi|$ denotes the cardinality of hyperrectangles given by the partition $\pi$ of $\cro{0,1}^{d}$.
Under this new notation, the weight associated to each $(\pi,r)\in M^{\cB,d}\times\N$ can be deduced from \eref{anisotropic-weight-def} as
\begin{equation}\label{anisotropic-weight-def-implicit}
\Delta(\pi,r)=\log(8d)|\pi|+r.
\end{equation} 

\subsection{Adaptivity result}
Before deriving the risk bound for our estimator based on the constructed family $\left\{\gGamma_{({\bs{s}},r)}^{\cB,d},\;({\bs{s}},r)\in\cM\right\}$, we first discuss the parametrization issue of the exponential family. As it has been explained in Section~4.1 and 4.2 of \cite{Baraud2020}, the parametrization of the exponential family influences the converge rate of $\widehat\bsg$ to $\bsg\et$. For example, when $d=1$ one can see from Section~4.1 of \cite{Baraud2020} that if we parametrize exponential families by their mean, Poisson regression achieves much slower rate than the Gaussian case under the same $\alpha$-H\"older smoothness assumption on $\bsg\et$ with $\alpha\in(0,1]$. However, there do exist ways of parametrization such that the same rate of convergence can be achieved uniformly regardless the choice of the exponential family. We assume the following holds.
\begin{ass}\label{model-parametrize}
The exponential family $\widetilde\sQ=\{R_{\gamma},\; \gamma\in I\}$ has been paramet\-rized in the way that there exists a constant $\kappa>0$ such that 
\begin{equation*}
h(R_{\gamma},R_{\gamma'})\leq\kappa|\gamma-\gamma'|\text{\quad for all\quad}\gamma,\gamma'\in I.
\end{equation*}
\end{ass}
Let us remark that, by Proposition~2 of \cite{Baraud2020}, Assumption~\ref{model-parametrize} is fulfilled with $\kappa=1$ when the exponential family is parametrized by $\gamma=\upsilon(\theta)$, where $\theta$ is the natural parameter and $\upsilon$ satisfies $\upsilon'(\theta)=\sqrt{A''(\theta)/8}$ with the function $A$ defined as $$A(\theta)=\log\cro{\int_{\sY}e^{\theta T(y)}d\nu(y)}.$$

For any ${\bm{\alpha}}=(\alpha_{1},\ldots,\alpha_{d})\in(\R_{+}^{*})^{d}$, we denote $\alpha_{\min}=\min_{1\leq j\leq d}\alpha_{j}$ and $\overline\alpha$ the harmonic mean of $\alpha_{1},\ldots,\alpha_{d}$, i.e.
\[
\overline\alpha=\left(\frac{1}{d}\sum_{j=1}^{d}\frac{1}{\alpha_{j}}\right)^{-1}.
\]
With the family of models $\left\{\bsG^{\cB,d}_{({\bs{s}},r)},\;({\bs{s}},r)\in\cM\right\}$ defined in Section~\ref{model-construction}, the associated countable subsets $\left\{\gGamma^{\cB,d}_{({\bs{s}},r)},\;({\bs{s}},r)\in\cM\right\}$ and the weights defined by \eref{anisotropic-weight-def}, we are now able to apply the model selection procedure introduced in Section~\ref{Mod-Sel} to estimate $\bsg\et$. The following result shows that under Assumption~\ref{model-parametrize}, the resulted estimator $\widehat\bsg(\bsX)$ based on $\left\{\gGamma^{\cB,d}_{({\bs{s}},r)},\;({\bs{s}},r)\in\cM\right\}$ is adapted to the possible anisotropy over a wide range of the anisotropic Besov spaces with a risk bound of order $n^{-2\overline\alpha/(2\overline\alpha+d)}$ up to a logarithmic factor with respect to the distance $d(\bsg\et,\widehat\bsg)=h^{2}\pa{R_{\bsg\et},R_{\widehat \bsg}}$. One nice feature is that this risk bound is independent of the choice of the exponential family. 
\begin{cor}\label{besov-approximation}
Under Assumption~\ref{model-parametrize}, whatever the distribution of $W$, the estimator $\widehat\bsg(\bsX)$ given by the model selection 
procedure in Section~\ref{Mod-Sel} over the family $\left\{\gGamma^{\cB,d}_{({\bs{s}},r)},\;({\bs{s}},r)\in\N^{d}\times\N\right\}$ with the weights defined by \eref{anisotropic-weight-def} satisfies for all $R>0$, $p>0$ and ${\bm{\alpha}}\in(\R_{+}^{*})^{d}$ such that $\overline\alpha/d>1/p$, 
\begin{equation*}
\sup_{\bsg\et\in B_{p,q}^{{\bm{\alpha}}}\left(R,v_{-},v_{+}\right)}\E\cro{h^{2}\pa{R_{\bsg\et},R_{\widehat \bsg}}}\leq C_{\kappa,d,{\bm{\alpha}},p}\left(R^{\frac{2d}{d+2\overline\alpha}}n^{-\frac{2\overline\alpha}{d+2\overline\alpha}}+\frac{1}{n}\right)\left(1+\log n\right),
\end{equation*}
where $q=\infty$ if $0<p\leq1$ or $p\geq2$ and $q=p$ if $1<p<2$, $C_{\kappa,d,{\bm{\alpha}},p}$ is a constant depending on $\kappa,d,{\bm{\alpha}},p$ only.
\end{cor}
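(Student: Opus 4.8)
The plan is to combine the abstract risk bound from Theorem~\ref{thm-1} (in its i.i.d.\ form \eref{iid-bound}) with a classical approximation-theoretic bound for anisotropic Besov functions by piecewise polynomials, and then to optimize over the family $\cM=\N^{d}\times\N$. The key realization is that Assumption~\ref{model-parametrize} converts the Hellinger risk into an $\L_2$-type approximation problem: since $h(R_{\bsg\et},R_{\bsg})\le\kappa\|\bsg\et-\bsg\|_{\infty}$ pointwise, we get for every model index $(\bs{s},r)$
\[
h^{2}(R\et,\sQ_{(\bs{s},r)})\le\kappa^{2}\inf_{\bsg\in\gGamma^{\cB,d}_{(\bs{s},r)}}\|\bsg\et-\bsg\|_{\infty,P_W}^{2}\le\kappa^{2}\inf_{\bsg\in\overline\bsS^{\cB,d}_{(\bs{s},r)}}\|\bsg\et-\bsg\|_{\infty}^{2},
\]
where the last step uses the density from Lemma~\ref{besov-uniformly} together with the truncation at $[v_-,v_+]$ (which only improves the sup-norm error since $\bsg\et$ itself takes values there). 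This crucially makes the bias bound independent of the exponential family and of $P_W$, which is exactly the ``nice feature'' the corollary advertises.

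\emph{First} I would invoke a quantitative approximation estimate for anisotropic Besov balls. For $\bsg\et\in B_{p,q}^{\bm{\alpha}}(R,v_-,v_+)$ with $\overline\alpha/d>1/p$ (the embedding condition guaranteeing $B^{\bm\alpha}_{p,q}\hookrightarrow\L_\infty$, so that sup-norm approximation is meaningful), one chooses in each coordinate $j$ a resolution $s_j$ tuned to the directional smoothness $\alpha_j$ and a polynomial degree $r\ge\max_j\lceil\alpha_j\rceil$. The standard anisotropic result gives, for a suitable choice of $\bs{s}=\bs{s}(N)$ balancing the directions, an approximation error of order $R\,N^{-\overline\alpha/d}$ where $N=\prod_{j}2^{s_j}$ is the number of hyperrectangles; the balancing forces $2^{s_j}\asymp N^{\,\overline\alpha/(d\alpha_j)}$ so that all directional errors coincide, and it is the harmonic mean $\overline\alpha$ that governs the effective rate. \emph{Then} the VC dimension \eref{explicit-vc} satisfies $V_{(\bs{s},r)}\le(r+1)^{d}N+1\asymp N$ (with $r$ fixed once $\bm\alpha$ is), and the weight \eref{anisotropic-weight-def} gives $\Delta(\bs{s},r)\le\log(8d)\,N+r\asymp N$ as well.

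\emph{Next}, I substitute these two ingredients into \eref{iid-bound}. Dropping the infimum to the single near-optimal index just described, the right-hand side becomes of order
\[
R^{2}N^{-2\overline\alpha/d}+\frac{N}{n}\,L_n,\qquad L_n=1+\log_{+}(n/N),
\]
and I optimize over $N$. Ignoring the log factor, balancing $R^2 N^{-2\overline\alpha/d}$ against $N/n$ yields $N\asymp (R^2 n)^{d/(d+2\overline\alpha)}$, and plugging back gives the announced rate $R^{2d/(d+2\overline\alpha)}\,n^{-2\overline\alpha/(d+2\overline\alpha)}$, with the additive $1/n$ coming from the ``$+1$'' in the VC dimension and the stray constants, and the $(1+\log n)$ factor absorbing $L_n$. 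The choices of $q$ ($q=\infty$ for $0<p\le1$ or $p\ge2$, $q=p$ for $1<p<2$) are dictated by which Besov embedding theorem delivers the clean sup-norm approximation rate in each regime of $p$, and these are exactly the cases covered by the approximation lemma I would cite or prove separately.

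\emph{The main obstacle} is establishing the anisotropic sup-norm approximation estimate itself with the correct dependence on $R$, on the harmonic mean $\overline\alpha$, and on the coordinatewise resolutions $\bs{s}$ --- i.e.\ showing that piecewise tensor-product polynomials on the rectangular partition $M^{\cB,d}_{\bs{s}}$ achieve error $\lesssim R\,N^{-\overline\alpha/d}$ for the whole range of $p$ allowed by the embedding condition, in particular handling $p<1$ where the (quasi-)norm is delicate and the usual linear approximation arguments must be adapted. Everything else --- the conversion via Assumption~\ref{model-parametrize}, the VC and weight bookkeeping, and the final optimization over $N$ --- is routine once that approximation bound is in hand; care is only needed to verify that the constant $C_{\kappa,d,\bm\alpha,p}$ indeed depends on nothing beyond $\kappa,d,\bm\alpha,p$ and, in particular, that it does not depend on the exponential family (which is automatic because $\kappa$ is the only family-dependent quantity and it enters merely as an overall factor $\kappa^2$ in the bias).
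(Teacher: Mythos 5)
Your proposal is correct and follows essentially the same route as the paper's proof: reduce the Hellinger bias to a sup-norm approximation problem via Assumption~\ref{model-parametrize}, bound the VC dimension and weight of each model by the number of hyperrectangles, and balance $R^{2}N^{-2\overline\alpha/d}$ against $N/n$ (with the $nR^{2}<1$ case handled separately to produce the additive $1/n$). The anisotropic sup-norm approximation estimate you flag as the main obstacle is precisely what the paper imports as Proposition~\ref{appro-aniso} from \cite{akakpo2012adaptation} (with $p'=\infty$ and $r=\lfloor\sup_{j}\alpha_{j}\rfloor$), which also accounts for the stated restrictions on $q$.
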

The proof of Corollary~\ref{besov-approximation} is postponed to Appendix A. We hereby give some comments on this result. First, Corollary~\ref{besov-approximation} in fact holds for any $0<q\leq\infty$, if $0<p\leq1$ or $p\geq2$ and $0<q\leq p$, if $1<p<2$ as a consequence of embedding the anisotropic Besov spaces to some bigger spaces. We will not discuss too much on this direction but refer the reader to Section~2.3 of \cite{akakpo2012adaptation}. Second, as it has been discussed by Section~2.1 of \cite{suzuki2019deep}, the parameter $p$ plays a role of controlling the spatial inhomogeneity of the smoothness. In particular, when $p=\infty$, the smoothness is ensured uniformly. Our result, therefore, is also adapted to $\bsg\et$ with potentially inhomogeneous smoothness. Third, the rate is optimal up to a logarithmic factor in the minimax sense at least when $d=1$ as it has been shown in Proposition~4 of \cite{Baraud2020}. Finally, the condition $\overline\alpha/d>1/p$ appearing in the result is more strict than the usual one which only requires $\overline\alpha/d>\left(1/p-1/2\right)_{+}$. This is because we do not make any assumption on the distribution of the covariate $W$. Therefore, we bound the approximation bias with respect to the sup-norm $\|\cdot\|_{\infty}$. As one can see from the proof of Corollary~\ref{besov-approximation}, this bias bound can be reconsidered if the specific distribution of the covariate $W$ is given. In the particular case when the probability measure $P_{W}$ admits a density $P_{W}=p_{W}\cdot\lambda$ with respect to the Lebesgue measure $\lambda$ and $\|p_{W}\|_{\infty}\leq K$ (i.e. the probability measure $P_{W}$ is equivalent to the Lebesgue probability on $\sW=\cro{0,1}^{d}$), we only need to require the usual condition $\overline\alpha/d>\left(1/p-1/2\right)_{+}$ to obtain the same rate in Corollary~\ref{besov-approximation}, where the numerical constant depends on $K$, $\kappa$, $d$, ${\bm{\alpha}}$ and $p$.

\section{Model selection under structural assumptions}\label{composite}
In the last section, we have seen that when the covariates $W_{i}$ are truly i.i.d. on $\cro{0,1}^{d}$ and $R_{i}\et=R_{\bsg\et}$ for all $i\in\{1,\ldots,n\}$ with $\bsg\et\in B_{p,q}^{{\bm{\alpha}}}\left(R,v_{-},v_{+}\right)$, the estimator $\widehat\bsg(\bsX)$ obtained from our model selection procedure based on $\left\{\gGamma^{\cB,d}_{({\bs{s}},r)},\;({\bs{s}},r)\in\cM\right\}$ achieves the converge rate $n^{-2\overline\alpha/(d+2\overline\alpha)}$ adaptively. When the value of $d$ is large, this rate becomes slow, which is, as a well-known phenomenon, called the curse of dimensionality. To circumvent it, we impose structural assumptions on $\bsg\et$ in this section and consider additional models to implement our procedure. We mainly discuss two examples of the structural assumptions: generalized additive structure and multiple index structure. 

We begin with setting some notations. Let $k\in\N^{*}$ and ${\bm{w}}=(w_{1},\ldots,w_{k})\in\cro{0,1}^{k}$. For a vector ${\bm{\alpha}}=(\alpha_{1},\ldots,\alpha_{k})\in(\R_{+}^{*})^{k}$ with $\alpha_{j}=r_{j}+\alpha_{j}'$, $r_{j}\in\N$ and $\alpha'_{j}\in(0,1]$ for $j\in\{1,\ldots,k\}$, H\"older space $\cH^{\bm{\alpha}}(\cro{0,1}^{k})$ denotes the collection of functions $f$ on $\cro{0,1}^{k}$ 
satisfying for any $(w_{1},\ldots,w_{j-1},w_{j+1},\ldots,w_{k})\in\cro{0,1}^{k-1}$ and all $x,y\in\cro{0,1}$
\begin{equation*}
\left|\partial_{j}^{r_{j}} f(w_{1},\ldots,x,\ldots,w_{k})-\partial_{j}^{r_{j}} f(w_{1},\ldots,y,\ldots,w_{k})\right|\leq L(f)|x-y|^{\alpha'_{j}},
\end{equation*}
where $\partial_{j}^{r_{j}} f$ denotes the $r_{j}$-th order partial derivative of the function $f$ on the $j$-th component. We define the anisotropic H\"older class $\cH^{{\bm{\alpha}}}(\cro{0,1}^{k},L)$ as the collection of all the functions $f\in\cH^{{\bm{\alpha}}}(\cro{0,1}^{k})$ with $L(f)+\inf \overline L\leq L$, where the infimum runs among all $\overline L$ such that 
\begin{equation*}
|f({\bm{w}})-f({\bm{w}}')|\leq \overline L\sum_{j=1}^{k}|w_{j}-w'_{j}|^{\alpha_{j}\wedge1},\text{\ for all\ }{\bm{w}},{\bm{w}}'\in\cro{0,1}^{k}
\end{equation*}
and define $\cH^{{\bm{\alpha}}}(L,v_{-},v_{+})$ as the collection of all functions $f\in\cH^{{\bm{\alpha}}}(\cro{0,1}^{k},L)$ taking values in $\cro{v_{-},v_{+}}\subset I$ with $v_{-}<v_{+}$.

Given $t_{j}\in\N^{*}$, $1\leq j\leq k$, for any $h_{j}\in\Phi(t_{j})=\{0,\ldots,t_{j}-1\}$, we define 
\begin{equation}\label{hyper-holder}
I'_{j}(h_{j})=\left\{
\begin{aligned}
&\cro{0,1/t_{j}}&, &\quad h_{j}=0,\\
&\ (h_{j}/t_{j},(h_{j}+1)/t_{j}]&, &\quad h_{j}=1,\ldots,t_{j}-1.
\end{aligned}
\right.
\end{equation}
For a given $k\in\N^{*}$ and ${\bs{t}}=(t_{1},\ldots,t_{k})\in(\N^{*})^{k}$, we denote $M_{\bs{t}}^{\cH,k}$ the resulted partition of $\cro{0,1}^{k}$ into the union of $\prod_{j=1}^{k}t_{j}$ hyperrectangles $$\cup_{(h_{1},\ldots,h_{k})\in\Phi(t_{1})\times\cdots\times\Phi(t_{k})}\prod_{j=1}^{k}I'_{j}(h_{j}),$$ where on $j$-th direction the interval $\cro{0,1}$ is divided into $t_{j}$ regular subintervals, for $j\in\{1,\ldots,k\}$. For any $k\in\N^{*}$, ${\bs{t}}\in(\N^{*})^{k}$ and $r\in\N$, we denote $\overline\bsS^{\cH,k}_{({\bs{t}},r)}$ the space of piecewise polynomial functions $f$ on $\cro{0,1}^{k}$ such that the restriction of $f$ to each hyperrectangle is a polynomial in $k$ variables of degree not larger than $r$ for each variable and $\bsS^{\cH,k}_{({\bs{t}},r)}$ the collection of functions with the same form as the ones belonging to $\overline\bsS^{\cH,k}_{({\bs{t}},r)}$ apart from restricting the coefficients in front of the polynomial basis to be rational numbers. With a similar argument as the proof of Lemma~\ref{besov-uniformly}, the following result is easy to obtain.
\begin{lem}\label{holder-uniformly}
For any $k\in\N^{*}$, ${\bs{t}}\in(\N^{*})^{k}$ and $r\in\N$, $\bsS_{({\bs{t}},r)}^{\cH,k}$ is dense in $\overline\bsS_{({\bs{t}},r)}^{\cH,k}$ with respect to the supremum norm $\|\cdot\|_{\infty}$.
\end{lem}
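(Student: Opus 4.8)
The plan is to mimic exactly the density argument used for Lemma~\ref{besov-uniformly}, which establishes the analogous statement for the Besov-type piecewise polynomial spaces $\bsS_{({\bs{s}},r)}^{\cB,d}\subset\overline\bsS_{({\bs{s}},r)}^{\cB,d}$. The only structural difference between the two settings is cosmetic: here the partition $M_{\bs{t}}^{\cH,k}$ of $\cro{0,1}^{k}$ is built from $\prod_{j=1}^{k}t_{j}$ hyperrectangles indexed by ${\bs{t}}\in(\N^{*})^{k}$ via \eref{hyper-holder}, whereas in the Besov case the cells are dyadic; but for the purpose of density this distinction is irrelevant, since in both cases we have a \emph{fixed finite} partition and, on each cell, a \emph{finite-dimensional} space of polynomials of degree at most $r$ in each of the $k$ variables.

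First I would fix $k\in\N^{*}$, ${\bs{t}}\in(\N^{*})^{k}$ and $r\in\N$, and take an arbitrary $f\in\overline\bsS_{({\bs{t}},r)}^{\cH,k}$ together with $\varepsilon>0$. On each hyperrectangle of $M_{\bs{t}}^{\cH,k}$ the function $f$ is a polynomial whose coefficients $\gamma_{(r_{1},\ldots,r_{k})}\in\R$ multiply the monomials $\prod_{j=1}^{k}w_{j}^{r_{j}}$; since there are finitely many cells and, on each, finitely many coefficients (exactly $(r+1)^{k}$ per cell), I would approximate every real coefficient by a rational one, thereby producing a function $g\in\bsS_{({\bs{t}},r)}^{\cH,k}$. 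The key step is the uniform control: on the compact $\cro{0,1}^{k}$ each monomial $\prod_{j=1}^{k}w_{j}^{r_{j}}$ is bounded by $1$, so replacing each coefficient $\gamma$ by a rational $\widetilde\gamma$ with $|\gamma-\widetilde\gamma|$ small makes $\|f-g\|_{\infty}$ on each cell bounded by the sum of the finitely many coefficient-discrepancies. Choosing every rational approximation within $\varepsilon/(r+1)^{k}$ of the corresponding real coefficient yields $\|f-g\|_{\infty}\le\varepsilon$ on each hyperrectangle, hence on all of $\cro{0,1}^{k}$ since the cells cover the cube. As $\varepsilon>0$ was arbitrary, $\bsS_{({\bs{t}},r)}^{\cH,k}$ is dense in $\overline\bsS_{({\bs{t}},r)}^{\cH,k}$ for $\|\cdot\|_{\infty}$.

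There is no genuine obstacle here; the lemma is routine, and the excerpt itself flags this by saying ``with a similar argument as the proof of Lemma~\ref{besov-uniformly}, the following result is easy to obtain.'' The only point that warrants a line of care is ensuring the approximation is \emph{uniform} across all cells simultaneously: because the partition is finite, one takes the worst cell-wise bound, and the density of $\Q$ in $\R$ applied coordinate-wise to the finitely many polynomial coefficients suffices. I would therefore present the argument tersely, emphasizing only that finiteness of the partition together with finite-dimensionality of the per-cell polynomial space reduces everything to density of $\Q$ in $\R$, exactly as in the Besov case.
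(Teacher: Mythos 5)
Your proof is correct and follows essentially the same route as the paper, which proves Lemma~\ref{holder-uniformly} by invoking the argument of Lemma~\ref{besov-uniformly}: approximate each of the finitely many polynomial coefficients on each cell by a rational within $\varepsilon/(r+1)^{k}$ and use that the monomials are bounded by $1$ on $\cro{0,1}^{k}$. No discrepancies to report.
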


\subsection{Generalized additive structure}\label{structure-add}
Generalized additive functions, as a classical structural assumption, have been considered in many statistical literatures. Let $\alpha, L\in\R_{+}^{*}$, ${\bm{\beta}}=(\beta_{1},\ldots,\beta_{d})\in(\R_{+}^{*})^{d}$, ${\bf{p}}=(p_{1},\ldots,p_{d})\in(\R_{+}^{*})^{d}$ and ${\bf{R}}=(R_{1},\ldots,R_{d})\in(\R_{+}^{*})^{d}$. We denote $\cF_{\cro{v_{-},v_{+}}}(\alpha,{\bm{\beta}},{\bf{p}},L,{\bf{R}})$ the collection of functions $\bsg:\cro{0,1}^{d}\rightarrow\cro{v_{-},v_{+}}\subset I$ of the following form
\begin{equation*}
\bsg({\bm{w}})=f\left(\sum_{j=1}^{d}g_{j}(w_{j})\right),\mbox{\quad for all\ }{\bm{w}}=(w_{1},\ldots,w_{d})\in\cro{0,1}^{d},
\end{equation*}
where $f\in\cH^{\alpha}(L,v_{-},v_{+})$ and $g_{j}\in B^{\beta_{j}}_{p_{j},p_{j}}(\cro{0,1},R_{j})$ taking values in $\cro{0,1/d}$, for $j\in\{1,\ldots,d\}$. 

We assume the regression function $\bsg\et\in\cF_{\cro{v_{-},v_{+}}}(\alpha,{\bm{\beta}},{\bf{p}},L,{\bf{R}})$ but without the knowledge of $\alpha$, ${\bm{\beta}}$, ${\bf{p}}$, $L$ and ${\bf{R}}$. To estimate $\bsg\et$ by our model selection procedure, we need to first build suitable approximation models for the class of functions $\cF_{\cro{v_{-},v_{+}}}(\alpha,{\bm{\beta}},{\bf{p}},L,{\bf{R}})$.

To approximate the Besov class of functions $B^{\beta_{j}}_{p_{j},p_{j}}(\cro{0,1},R_{j})$, we consider the family $\left\{\overline\bsS_{(s,r)}^{\cB,1},\;(s,r)\in\N\times\N\right\}$ introduced in Section~\ref{model-construction} taking $d=1$. We recall that the functions belonging to the above family are built based on the collection of particular partitions $M^{\cB,1}=\cup_{s\in\N}M_{s}^{\cB,1}$. Therefore, we can rewrite the family in an alternative way $\left\{\overline\bsS_{(\pi,r)}^{\cB,1},\;(\pi,r)\in M^{\cB,1}\times\N\right\}$. To approximate the H\"older class of functions $\cH^{\alpha}(\cro{0,1},L)$ with values in $\cro{v_{-},v_{+}}$, we consider the family $\left\{\bsG^{\cH,1}_{(t,r)},\;(t,r)\in\N^{*}\times\N\right\}$, where $\bsG^{\cH,1}_{(t,r)}=\left\{(\bsg\vee v_{-})\wedge v_{+},\;\bsg\in\overline\bsS^{\cH,1}_{(t,r)}\right\}$.

For any $r\in\N$, $t\in\N^{*}$ and ${\bs{\pi}}=(\pi_{1},\ldots,\pi_{d})\in(M^{\cB,1})^{d}$, we define $\bsG^{A}_{({\bs{\pi}},t,r)}$ the collection of all the functions $\bsg$ on $\sW=\cro{0,1}^{d}$ of the form
\begin{equation}\label{define-function-add}
\bsg({\bs{w}})=f\cro{(g({\bs{w}})\vee0)\wedge 1},\mbox{\quad for all\ }{\bm{w}}=(w_{1},\ldots,w_{d})\in\cro{0,1}^{d},
\end{equation}
where $g({\bs{w}})=\sum_{j=1}^{d}g_{j}(w_{j})$ with $g_{j}\in\overline\bsS^{\cB,1}_{(\pi_{j},r)}$, for $j\in\{1,\ldots,d\}$ and $f\in\bsG^{\cH,1}_{(t,r)}$.
The following result reveals the upper bound of the VC dimension for the class of functions $\bsG^{A}_{({\bs{\pi}},t,r)}$.
\begin{prop}\label{add-vc-bound}
Given $r\in\N$, $t\in\N^{*}$ and ${\bs{\pi}}\in(M^{\cB,1})^{d}$, the class of functions $\bsG^{A}_{({\bs{\pi}},t,r)}$ is a VC-subgraph on $\cro{0,1}^{d}$ with dimension 
$$V^{A}_{({\bs{\pi}},t,r)}\leq 2+\cro{t(r+1)+2\sum_{j=1}^{d}|\pi_{j}|(r+1)}\log_{2}\cro{4eU\log_{2}\left(2eU\right)},$$
where $U=t+r+2$.
\end{prop}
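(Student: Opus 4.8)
The plan is to control $V^{A}_{({\bs{\pi}},t,r)}$ directly through the combinatorial quantity it governs, namely the number of sign patterns that the subgraphs of functions in $\bsG^{A}_{({\bs{\pi}},t,r)}$ can cut out of a finite set. Fix $N$ points $(w^{(1)},u^{(1)}),\ldots,(w^{(N)},u^{(N)})$ in $\cro{0,1}^{d}\times\R$; by the definition of VC-subgraph dimension recalled in Section~\ref{sect-3}, it suffices to find the largest $N$ for which the vectors $\pa{\1_{u^{(i)}<\bsg(w^{(i)})}}_{1\le i\le N}$ can exhaust $\{0,1\}^{N}$ as $\bsg$ varies, i.e.\ to bound the number of distinct sign vectors of $\pa{\bsg(w^{(i)})-u^{(i)}}_{i}$. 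I would parametrise $\bsg=f\cro{(g\vee0)\wedge1}$ by $\theta=(a,b)$, where $a\in\R^{D_{g}}$ collects the coefficients of the additive part $g=\sum_{j}g_{j}$ with $g_{j}\in\overline\bsS^{\cB,1}_{(\pi_{j},r)}$, so that $D_{g}=(r+1)\sum_{j=1}^{d}|\pi_{j}|$, and $b\in\R^{D_{f}}$ collects the coefficients of the underlying piecewise polynomial $\overline f\in\overline\bsS^{\cH,1}_{(t,r)}$ (before the clip $f=(\overline f\vee v_{-})\wedge v_{+}$), so that $D_{f}=(r+1)t$. The key structural facts are that, for a \emph{fixed} point $w$, the value $g(w)$ is an \emph{affine} function of $a$, while $\overline f(g(w))$ is a polynomial of degree at most $r+1$ in $(a,b)$.

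Because the composition is genuinely two-layered — which polynomial piece of $\overline f$ governs the $i$-th point depends on where $g(w^{(i)})$ sits relative to the breakpoints, hence on $a$ — I would run a two-stage region decomposition of parameter space in the style of the sign-counting arguments for piecewise-polynomial networks. In the first stage I locate, for each point, $g(w^{(i)})$ relative to the $t+1$ thresholds $\{0,1/t,\ldots,(t-1)/t,1\}$, which simultaneously decide the action of the clip $x\mapsto(x\vee0)\wedge1$ and the active piece of $\overline f$. This amounts to at most $N(t+1)$ affine conditions in the $D_{g}$ variables $a$, which by a Warren-type sign-pattern bound partition $\R^{D_{g}}$ into at most $\pa{cN(t+1)/D_{g}}^{D_{g}}$ cells. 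In the second stage, inside a fixed cell every piece index and clip status is frozen, so each $\bsg(w^{(i)})$ becomes a fixed polynomial of degree at most $r+1$ in $(a,b)$; resolving the two clipping inequalities $\overline f-v_{\pm}$ and the final subgraph inequality $\bsg(w^{(i)})-u^{(i)}$ costs $O(N)$ polynomials of degree at most $r+1$ in the $D_{g}+D_{f}$ variables $(a,b)$, hence at most $\pa{cN(r+1)/(D_{g}+D_{f})}^{D_{g}+D_{f}}$ sign patterns per cell.

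Multiplying the two stages bounds the total number of subgraph sign patterns by a product whose exponent is $D_{g}+(D_{g}+D_{f})=2D_{g}+D_{f}=t(r+1)+2\sum_{j=1}^{d}|\pi_{j}|(r+1)$; this is exactly the coefficient of the logarithm in the statement, and the factor $2$ in front of $\sum_{j}|\pi_{j}|(r+1)$ is precisely the fingerprint of the coefficients $a$ of $g$ being counted once at each layer. Writing $U=t+r+2$, so that $t+1\le U$ and $r+1\le U$ absorb both the threshold count and the degree, the estimate takes the shape $2^{N}\le\pa{cNU}^{2D_{g}+D_{f}}$. Requiring the $N$ points to be shattered forces $N\le(2D_{g}+D_{f})\log_{2}(cNU)$, and solving this transcendental inequality with the elementary lemma controlling $N$ from $N\le A\log_{2}(BN)$ yields $V^{A}_{({\bs{\pi}},t,r)}\le2+\cro{t(r+1)+2\sum_{j=1}^{d}|\pi_{j}|(r+1)}\log_{2}\cro{4eU\log_{2}\pa{2eU}}$.

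I expect the main obstacle to be the faithful treatment of this two-layer coupling: the identity of the governing piece of $\overline f$ is itself a function of $a$, and it is exactly this dependence that forces the two-stage decomposition and produces the extra $D_{g}$ in the exponent. A secondary bookkeeping point is the two clips $(\cdot\vee0)\wedge1$ and $(\cdot\vee v_{-})\wedge v_{+}$: as in the invariance recalled from Lemma~2.6.18 of \cite{MR1385671}, they add breakpoints but do not raise the degree, so they enter only through the \emph{number} of conditions and not through the degree $r+1$. Pinning down the displayed numerical constants $4e$, $2e$ and the nested $\log_{2}\log_{2}$ structure then reduces to invoking the precise sign-pattern bound and the precise form of the inequality-inversion lemma.
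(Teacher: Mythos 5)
Your proposal is correct and follows essentially the same route as the paper's proof: a two-stage sign-pattern decomposition of parameter space (first the affine conditions locating $g(w^{(i)})$ relative to the $t+1$ thresholds in the $D_{g}=(r+1)\sum_{j}|\pi_{j}|$ coefficients of $g$, then degree-$(r+1)$ polynomial conditions in all $D_{g}+D_{f}$ coefficients within each cell), combined via the Bartlett--Maiorov--Meir counting lemma, a weighted AM--GM merge of the two bases into $U=t+r+2$, and the inversion lemma of Bartlett et al., yielding the exponent $2D_{g}+D_{f}$. The only cosmetic difference is that the paper handles the outer clip $(\cdot\vee v_{-})\wedge v_{+}$ by proving the bound for the unclipped class and invoking the truncation invariance of VC-subgraph dimension, rather than by adding sign conditions in the second stage.
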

The proof is postponed to Appendix C. For each $r\in\N$, $t\in\N^{*}$ and ${\bs{\pi}}=(\pi_{1},\ldots,\pi_{d})\in(M^{\cB,1})^{d}$, we take the countable subset $\gGamma^{A}_{({\bs{\pi}},t,r)}$ of $\bsG^{A}_{({\bs{\pi}},t,r)}$ defined as
\[
\gGamma^{A}_{({\bs{\pi}},t,r)}=\left\{f\cro{(g\vee0)\wedge1},\;f\in\gGamma^{\cH,1}_{(t,r)},\;g_{j}\in\bsS^{\cB,1}_{(\pi_{j},r)},\;j=1,\ldots,d\right\},
\]
where $\bsS^{\cB,1}_{(\pi_{j},r)}$ is the rational version of $\overline\bsS^{\cB,1}_{(\pi_{j},r)}$ which has been introduced in Section~\ref{model-construction}, $\gGamma^{\cH,1}_{(t,r)}=\left\{(\bsg\vee v_{-})\wedge v_{+},\;\bsS^{\cH,1}_{(t,r)}\right\}$ and $g({\bs{w}})=\sum_{j=1}^{d}g_{j}(w_{j})$, for all ${\bs{w}}=(w_{1},\ldots,w_{d})\in\cro{0,1}^{d}$. 

Let $\cM=(M^{\cB,1})^{d}\times\N^{*}\times\N$. For any $({\bs{\pi}},t,r)\in(M^{\cB,1})^{d}\times\N^{*}\times\N$, we associate it with the weight
\begin{equation}\label{add-weights}
\Delta({\bs{\pi}},t,r)=3\log2\left(\sum_{j=1}^{d}|\pi_{j}|\right)+r+t.
\end{equation}
The following result shows inequality \eref{def-weight} is satisfied with the weights defined by \eref{add-weights}.
\begin{lem}\label{add-inequality}
With the weights defined by \eref{add-weights}, we have $$\sum_{({\bs{\pi}},t,r)\in(M^{\cB,1})^{d}\times\N^{*}\times\N}e^{-\Delta({\bs{\pi}},t,r)}\leq\frac{e}{e-1}.$$
\end{lem}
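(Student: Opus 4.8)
The plan is to exploit the fact that the weight $\Delta({\bs{\pi}},t,r)$ defined in \eref{add-weights} splits additively into a term depending only on ${\bs{\pi}}$, a term depending only on $t$, and a term depending only on $r$. Consequently $e^{-\Delta({\bs{\pi}},t,r)}$ factorizes and, using $e^{-3\log2\cdot m}=2^{-3m}$, the triple sum over $\cM=(M^{\cB,1})^{d}\times\N^{*}\times\N$ becomes a product of three independent sums:
\[
\sum_{({\bs{\pi}},t,r)\in\cM}e^{-\Delta({\bs{\pi}},t,r)}=\pa{\sum_{{\bs{\pi}}\in(M^{\cB,1})^{d}}2^{-3\sum_{j=1}^{d}|\pi_{j}|}}\pa{\sum_{t\in\N^{*}}e^{-t}}\pa{\sum_{r\in\N}e^{-r}}.
\]
First I would dispatch the two elementary geometric series, namely $\sum_{r\in\N}e^{-r}=e/(e-1)$ and $\sum_{t\in\N^{*}}e^{-t}=1/(e-1)$.

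Next I would treat the sum over ${\bs{\pi}}$. Since $2^{-3\sum_{j}|\pi_{j}|}=\prod_{j=1}^{d}2^{-3|\pi_{j}|}$, summing over ${\bs{\pi}}=(\pi_{1},\ldots,\pi_{d})\in(M^{\cB,1})^{d}$ factorizes once more, yielding $\pa{\sum_{\pi\in M^{\cB,1}}2^{-3|\pi|}}^{d}$. Recalling that $M^{\cB,1}=\{M^{\cB,1}_{s}:s\in\N\}$ and that the dyadic partition $M^{\cB,1}_{s}$ consists of exactly $|M^{\cB,1}_{s}|=2^{s}$ intervals (so that $s$ indexes the partitions bijectively), this inner sum equals the doubly-exponentially convergent series $\sum_{s\in\N}2^{-3\cdot2^{s}}$.

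The only genuinely quantitative step is to bound this last series, and this is where a moment's care is needed. I would invoke the elementary inequality $2^{s}\ge s+1$ for all $s\in\N$ to dominate each term by a geometric one:
\[
\sum_{s\in\N}2^{-3\cdot2^{s}}\le\sum_{s\in\N}2^{-3(s+1)}=\sum_{s\in\N}8^{-(s+1)}=\frac{1}{7}.
\]
In particular this quantity is strictly below $1$, so raising it to the power $d\ge1$ can only decrease it, giving $\pa{\sum_{\pi\in M^{\cB,1}}2^{-3|\pi|}}^{d}\le 1/7$ for every $d\ge1$.

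Finally I would assemble the three estimates. The product on the right-hand side of the displayed factorization is at most $\tfrac{1}{7}\cdot\tfrac{1}{e-1}\cdot\tfrac{e}{e-1}$, and since the first two factors multiply to $\tfrac{1}{7(e-1)}\le1$, the whole expression is bounded by $e/(e-1)$, which is exactly the claim. No step here is a real obstacle: the argument is a double factorization followed by summing geometric series, the single point worth flagging being the domination of the rapidly converging sum $\sum_{s}2^{-3\cdot2^{s}}$ by a geometric series through $2^{s}\ge s+1$.
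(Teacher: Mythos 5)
Your proof is correct. It follows the same overall strategy as the paper --- factorize $e^{-\Delta({\bs{\pi}},t,r)}$ over the three coordinates and reduce everything to geometric series, with the sum over $r\in\N$ supplying the final constant $e/(e-1)$ --- but the one non-trivial step, bounding the sum over partitions, is handled by a genuinely different argument. The paper embeds the dyadic family $\cup_{s\in\N}M^{\cB,1}_{s}$ into the larger family $\cup_{D\in\N^{*}}M^{1}_{D}$ of all recursive partitions of $\cro{0,1}$ into $D$ intervals, invokes the cardinality bound $|M^{1}_{D}|\leq 4^{D}$ borrowed from Akakpo's binary-tree counting, and bounds the resulting factor by $\sum_{D\in\N^{*}}4^{D}8^{-D}=1$. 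You instead use that $\pi\mapsto s$ is a bijection between $M^{\cB,1}$ and $\N$ with $|\pi|=2^{s}$, so the factor is exactly $\sum_{s\in\N}8^{-2^{s}}$, which you dominate by a geometric series via $2^{s}\geq s+1$ to obtain $1/7$. Your route is more elementary and self-contained (no appeal to an external counting result) and gives a sharper constant for that factor; the paper's route matches the proof of Lemma~\ref{anisotropic-weight} and would survive an enlargement of the model family beyond the purely dyadic partitions. All the individual estimates in your write-up ($\sum_{t\in\N^{*}}e^{-t}=1/(e-1)$, the factorization over $j=1,\ldots,d$, and the final check $1/(7(e-1))\leq1$) are correct, so the argument is complete.
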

With Proposition~\ref{add-vc-bound} and Lemma~\ref{add-inequality}, we can apply the model selection procedure introduced in Section~\ref{Mod-Sel} and obtain the following.
\begin{cor}\label{additive-model-risk}
Under Assumption~\ref{model-parametrize}, no matter what the distribution of $W$ is, the estimator $\widehat\bsg(\bsX)$ given by the model selection procedure in Section~\ref{Mod-Sel} over $\left\{\gGamma^{A}_{({\bs{\pi}},t,r)},\;({\bs{\pi}},t,r)\in(M^{\cB,1})^{d}\times\N^{*}\times\N\right\}$ with the weights defined by \eref{add-weights} satisfies for all $\alpha, L\in\R_{+}^{*}$ and ${\bm{\beta}}, {\bf{p}}, {\bf{R}}\in(\R_{+}^{*})^{d}$ such that $\beta_{j}>1/p_{j}$
\begin{align}
&\sup_{\bsg\et\in\cF_{\cro{v_{-},v_{+}}}(\alpha,{\bm{\beta}},{\bf{p}},L,{\bf{R}})}C'_{\kappa,d,\alpha,{\bm{\beta}},{\bf{p}}}\E\cro{h^{2}(R_{\bsg\et},R_{\widehat \bsg})}\nonumber\\&\leq\left\{\cro{\sum_{j=1}^{d}\left(LR_{j}^{\alpha\wedge1}\right)^{\frac{2}{2(\alpha\wedge1)\beta_{j}+1}}n^{-\frac{2(\alpha\wedge1)\beta_{j}}{2(\alpha\wedge1)\beta_{j}+1}}}+L^{\frac{2}{2\alpha+1}}n^{-\frac{2\alpha}{2\alpha+1}}+\frac{1}{n}\right\}\cL_{n}^{2},\label{add-risk-bound}
\end{align}
where $\cL_{n}=\log n\vee\log L^{2}\vee1$ and $C'_{\kappa,d,\alpha,{\bm{\beta}},{\bf{p}}}$ is a constant depending on $\kappa$, $d$, $\alpha$, ${\bm{\beta}}$ and ${\bf{p}}$.
\end{cor}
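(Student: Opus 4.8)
The plan is to derive \eqref{add-risk-bound} by specialising the i.i.d.\ risk bound \eqref{iid-bound} to the family $\ac{\gGamma^{A}_{({\bs{\pi}},t,r)},\;({\bs{\pi}},t,r)\in(M^{\cB,1})^{d}\times\N^{*}\times\N}$. The two structural hypotheses of \eqref{iid-bound} are already in hand: Proposition~\ref{add-vc-bound} shows each $\bsG^{A}_{({\bs{\pi}},t,r)}$ is VC-subgraph with dimension $V^{A}_{({\bs{\pi}},t,r)}$, so Assumption~\ref{model-VC} holds, and Lemma~\ref{add-inequality} shows the weights \eqref{add-weights} satisfy \eqref{def-weight}. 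Thus \eqref{iid-bound} bounds $\E\cro{h^{2}(R_{\bsg\et},R_{\widehat\bsg})}$, up to the numerical factor $c_{2}(c_{3}+\Sigma)$, by the infimum over $({\bs{\pi}},t,r)$ of $h^{2}(R_{\bsg\et},\sQ_{({\bs{\pi}},t,r)})+\Delta({\bs{\pi}},t,r)/n+(V^{A}_{({\bs{\pi}},t,r)}/n)L_{n}$. It remains to (i) convert the Hellinger bias into a sup-norm approximation error and (ii) choose a good index and optimise. For (i), Assumption~\ref{model-parametrize} gives $h^{2}(R_{\bsg\et(w)},R_{\bsg(w)})\le\kappa^{2}\ab{\bsg\et(w)-\bsg(w)}^{2}$, whence $h^{2}(R_{\bsg\et},\sQ_{({\bs{\pi}},t,r)})\le\kappa^{2}\inf_{\bsg}\norm{\bsg\et-\bsg}_{\infty}^{2}$ over $\bsg\in\gGamma^{A}_{({\bs{\pi}},t,r)}$; by the density of the rational subfamilies (Lemma~\ref{holder-uniformly} together with the $\bsS^{\cB,1}$ analogue Lemma~\ref{besov-uniformly}) this infimum equals the one over the full model $\bsG^{A}_{({\bs{\pi}},t,r)}$, so it suffices to approximate $\bsg\et$ in sup-norm inside $\bsG^{A}_{({\bs{\pi}},t,r)}$.

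The heart of the argument is this approximation, carried out componentwise. Fix the degree $r\ge\lfloor\alpha\rfloor\vee\max_{j}\lfloor\beta_{j}\rfloor$, a constant depending only on $\alpha,{\bm{\beta}}$. For each $j$, invoke the univariate Besov approximation underlying Corollary~\ref{besov-approximation}: since $\beta_{j}>1/p_{j}$ there is a piecewise polynomial $\tilde g_{j}\in\overline\bsS^{\cB,1}_{(\pi_{j},r)}$ with $\norm{g_{j}-\tilde g_{j}}_{\infty}\le C R_{j}\ab{\pi_{j}}^{-\beta_{j}}$, and there is $\tilde f\in\bsG^{\cH,1}_{(t,r)}$ with $\norm{f-\tilde f}_{\infty}\le C L t^{-\alpha}$. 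Set $\tilde g=\sum_{j}\tilde g_{j}$ and $\tilde\bsg=\tilde f\cro{(\tilde g\vee0)\wedge1}\in\bsG^{A}_{({\bs{\pi}},t,r)}$, which has the form \eqref{define-function-add}. Writing $g=\sum_{j}g_{j}$, which takes values in $[0,1]$ because each $g_{j}\in\cro{0,1/d}$ so that $g=(g\vee0)\wedge1$, I split $\bsg\et-\tilde\bsg=\cro{f(g)-f((\tilde g\vee0)\wedge1)}+\cro{f((\tilde g\vee0)\wedge1)-\tilde f((\tilde g\vee0)\wedge1)}$. The second bracket is at most $\norm{f-\tilde f}_{\infty}\le CLt^{-\alpha}$, since its argument lies in $[0,1]$. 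For the first, the \emph{global} Hölder modulus built into the definition of $\cH^{\alpha}(L,v_{-},v_{+})$ gives $\ab{f(x)-f(y)}\le L\ab{x-y}^{\alpha\wedge1}$; combining this with the fact that $(\cdot\vee0)\wedge1$ is $1$-Lipschitz and with the subadditivity of $t\mapsto t^{\alpha\wedge1}$ yields $\ab{f(g(w))-f((\tilde g(w)\vee0)\wedge1)}\le L\pa{\sum_{j}\norm{g_{j}-\tilde g_{j}}_{\infty}}^{\alpha\wedge1}\le L\sum_{j}\norm{g_{j}-\tilde g_{j}}_{\infty}^{\alpha\wedge1}\le CL\sum_{j}R_{j}^{\alpha\wedge1}\ab{\pi_{j}}^{-(\alpha\wedge1)\beta_{j}}$. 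This is exactly where the effective smoothness $(\alpha\wedge1)\beta_{j}$ of the $j$-th component emerges, and it gives $\norm{\bsg\et-\tilde\bsg}_{\infty}\le CL\pa{\sum_{j}R_{j}^{\alpha\wedge1}\ab{\pi_{j}}^{-(\alpha\wedge1)\beta_{j}}+t^{-\alpha}}$.

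Plugging the squared sup-norm bias and the complexity term into \eqref{iid-bound}, and using that $r$ is fixed (absorbed into $C'_{\kappa,d,\alpha,{\bm{\beta}},{\bf{p}}}$) so that $V^{A}_{({\bs{\pi}},t,r)}\le C(t+\sum_{j}\ab{\pi_{j}})\log(t+2)$ while $\Delta({\bs{\pi}},t,r)\le C(t+\sum_{j}\ab{\pi_{j}})$ is of lower order and $L_{n}\le1+\log n$, one is left to balance, separately for $f$ and for each coordinate, the squared bias against the variance proxies $t/n$ and $\ab{\pi_{j}}/n$. Choosing the dyadic resolution $\ab{\pi_{j}}\asymp(L^{2}R_{j}^{2(\alpha\wedge1)}n)^{1/(2(\alpha\wedge1)\beta_{j}+1)}$ and $t\asymp(L^{2}n)^{1/(2\alpha+1)}$, rounded to admissible powers of two and integers at the cost of only multiplicative constants, makes the $j$-th squared bias and variance both of order $(LR_{j}^{\alpha\wedge1})^{2/(2(\alpha\wedge1)\beta_{j}+1)}n^{-2(\alpha\wedge1)\beta_{j}/(2(\alpha\wedge1)\beta_{j}+1)}$ and the $f$-contribution of order $L^{2/(2\alpha+1)}n^{-2\alpha/(2\alpha+1)}$. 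Since the chosen resolutions are polynomial in $n$ and $L^{2}$, both $\log(t+2)$ and $L_{n}$ are $\le C\cL_{n}$ with $\cL_{n}=\log n\vee\log L^{2}\vee1$, which produces the factor $\cL_{n}^{2}$; evaluating the infimum in \eqref{iid-bound} at this index then gives \eqref{add-risk-bound}.

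The main obstacle is the approximation step of the second paragraph: extracting the correct effective smoothness $(\alpha\wedge1)\beta_{j}$ and verifying that the two truncations built into $\bsG^{A}_{({\bs{\pi}},t,r)}$ — the truncation of $\sum_{j}\tilde g_{j}$ to $[0,1]$ and the restriction of $\tilde f$ to values in $\cro{v_{-},v_{+}}$ — do not degrade the rate. This is precisely what forces the use of the \emph{global} Hölder modulus in the definition of $\cH^{\alpha}(L,v_{-},v_{+})$ rather than merely the local derivative condition. The remaining difficulty is bookkeeping: keeping the two logarithmic factors (one from $L_{n}$, one from the $\log(t+2)$ inside $V^{A}_{({\bs{\pi}},t,r)}$) under control so that they combine into $\cL_{n}^{2}$ while retaining the correct dependence on $L$ and the $R_{j}$, and carrying out the discrete optimisation so that rounding costs only constants.
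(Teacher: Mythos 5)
Your proposal is correct and follows essentially the same route as the paper's own proof: specialise the i.i.d.\ bound \eref{iid-bound}, convert the Hellinger bias to a sup-norm bias via Assumption~\ref{model-parametrize} and the density of the rational subfamilies, decompose the approximation error into an outer H\"older term and the inner Besov terms using the global modulus $|f(x)-f(y)|\le L|x-y|^{\alpha\wedge1}$ together with subadditivity of $z\mapsto z^{\alpha\wedge1}$, and then balance $t\asymp(nL^{2})^{1/(2\alpha+1)}$ against $|\pi_{j}|\asymp(nL^{2}R_{j}^{2(\alpha\wedge1)})^{1/(2(\alpha\wedge1)\beta_{j}+1)}$, with the two logarithmic factors absorbed into $\cL_{n}^{2}$. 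This matches the argument in Appendix~A.3, including the identification of the effective smoothness $(\alpha\wedge1)\beta_{j}$ and the handling of the truncations.
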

Corollary~\ref{additive-model-risk} tells that in the ideal situation $\bsg\et\in\cF_{\cro{v_{-},v_{+}}}(\alpha,{\bm{\beta}},{\bf{p}},L,{\bf{R}})$ for some $\alpha$, ${\bm{\beta}}$, ${\bf{p}}$, $L$ and $\gR$, the converge rate of the estimator is independent of $d$ which entails the procedure does not suffer from the curse of dimensionality. When $R\et\not=R_{\bsg\et}$ or $\bsg\et$ exists but does not belong to any $\cF_{\cro{v_{-},v_{+}}}(\alpha,{\bm{\beta}},{\bf{p}},L,{\bf{R}})$, a bias term will be added into the risk bound in Corollary~\ref{additive-model-risk}. However, as long as the bias term is not too large compared to the quantity on the right hand side of \eref{add-risk-bound}, the accuracy of the resulted estimator $\widehat\bsg(\bsX)$ remains the same magnitude as the ideal case which confirms the robustness of our estimator. 

\subsection{Multiple index structure}\label{structure-multi}
Let $\cC_{d}$ be the unit ball for the $\ell_{1}$-norm, i.e. $$\cC_{d}=\left\{(c_{1},\ldots,c_{d})\in\R^{d},\;\sum_{j=1}^{d}|c_{j}|\leq1\right\}.$$ For some known $l\in\N^{*}$ (typically $l\leq d$), we denote $\cG_{\cro{v_{-},v_{+}}}({\bs{\alpha}},L)$ the collection of all the functions $\bsg$ of the following form
\begin{equation}\label{multiple-index}
\bsg({\bs{w}})=f\circ g({\bs{w}}),\mbox{\quad for all\ }{\bs{w}}=(w_{1},\ldots,w_{d})\in\cro{0,1}^{d},
\end{equation}
where $g:\cro{0,1}^{d}\rightarrow\cro{0,1}^{l}$ defined as $g({\bs{w}})=(g_{1}({\bs{w}}),\ldots,g_{l}({\bs{w}}))$ with $$g_{j}({\bs{w}})=\frac{1}{2}\left[\langle a_{j},{\bs{w}}\rangle+1\right],\;a_{j}\in\cC_{d}\mbox{\quad for all \ }j\in\{1,\ldots,l\}$$ and $f\in\cH^{\bs{\alpha}}\left(L,v_{-},v_{+}\right)$ mapping $\cro{0,1}^{l}$ to $\cro{v_{-},v_{+}}\subset I$ with $L\in\R^{*}_{+}$, ${\bs{\alpha}}=(\alpha_{1},\ldots,\alpha_{l})\in(\R_{+}^{*})^{l}$ and $v_{-}<v_{+}$. We assume $\bsg\et\in\cG_{\cro{v_{-},v_{+}}}({\bs{\alpha}},L)$ but without knowing the values of ${\bs{\alpha}}$ and $L$.

To approximate the H\"older classes on $\cro{0,1}^{l}$ with values in $\cro{v_{-},v_{+}}$, we adopt the same strategy by considering the family $\left\{\bsG_{({\bs{t}},r)}^{\cH,l},\;({\bs{t}},r)\in(\N^{*})^{l}\times\N\right\}$, where $\bsG_{({\bs{t}},r)}^{\cH,l}=\left\{(\bsg\vee v_{-})\wedge v_{+},\;\bsg\in\overline\bsS_{({\bs{t}},r)}^{\cH,l}\right\}$. Let $\cro{l}=\{1,\ldots,l\}$. For any $r\in\N$ and ${\bs{t}}=(t_{1},\ldots,t_{l})\in(\N^{*})^{l}$, we define the class of functions $\bsG^{M}_{({\bs{t}},r)}$ on $\sW=\cro{0,1}^{d}$ as
\begin{equation*}
\bsG^{M}_{({\bs{t}},r)}=\left\{f\left(g_{1}(\cdot),\ldots,g_{l}(\cdot)\right),\;f\in\bsG^{\cH,l}_{({\bs{t}},r)},\;g_{j}=\frac{1}{2}\left[\langle a_{j},\cdot\rangle+1\right],\;a_{j}\in\cC_{d},\;j\in\cro{l}\right\}.
\end{equation*} 
The following result entails that $\bsG^{M}_{({\bs{t}},r)}$ is VC-subgraph on $\sW$.
\begin{prop}\label{multi-vc}
For any $r\in\N$ and ${\bs{t}}=(t_{1},\ldots,t_{l})\in(\N^{*})^{l}$, the class of functions $\bsG^{M}_{({\bs{t}},r)}$ is a VC-subgraph on $\sW=\cro{0,1}^{d}$ with dimension
\begin{equation}\label{multi-vc-bound}
V^{M}_{({\bs{t}},r)}\leq 2+\cro{2ld+\left(\prod_{j=1}^{l}t_{j}\right)(r+1)^{l}}\log_{2}\cro{4eU\log_{2}\left(2eU\right)},
\end{equation}
where $U=\sum_{j=1}^{l}t_{j}+lr+l+1$.
\end{prop}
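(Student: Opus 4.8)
The plan is to follow the same route as for Proposition~\ref{add-vc-bound}: I would exhibit $\bsG^{M}_{({\bs{t}},r)}$ as a finitely parametrized family of piecewise polynomials, and then feed the relevant parameter count and the degree/operation complexity into the general VC-dimension bound for composite piecewise-polynomial classes (the device that produces the $\log_{2}\cro{4eU\log_{2}\left(2eU\right)}$ factor in Proposition~\ref{add-vc-bound}). First I would dispose of the clipping: since $f=(\widetilde f\vee v_{-})\wedge v_{+}$ acts on the output of $g$, one has $f\pa{g(\cdot)}=\pa{(\widetilde f\circ g)\vee v_{-}}\wedge v_{+}$, so by the property derived from Lemma~2.6.18 of \cite{MR1385671} (clipping by fixed constants does not increase the VC-subgraph dimension) it suffices to bound the dimension of the unclipped class built from $\widetilde f\in\overline\bsS^{\cH,l}_{({\bs{t}},r)}$. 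Likewise the constraint $a_{j}\in\cC_{d}$ merely restricts the range of the parameters and affects neither the parameter count nor the degree bounds below.

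The functions are then parametrized by $\theta=(a_{1},\ldots,a_{l},(\gamma^{R})_{R})$, where $a_{j}\in\R^{d}$ carries the $l$ inner affine maps ($ld$ parameters) and, for each of the $\prod_{j=1}^{l}t_{j}$ hyperrectangles $R$ of the partition $M^{\cH,l}_{{\bs{t}}}$, the vector $\gamma^{R}\in\R^{(r+1)^{l}}$ carries the coefficients of the polynomial piece. Fix a point $({\bs{w}},u)$ and consider the subgraph indicator $\1\!\cro{u<\bsg({\bs{w}})}$. I would decompose its evaluation into two stages. In the first stage one locates $\pa{g_{1}({\bs{w}}),\ldots,g_{l}({\bs{w}})}$ in the partition: for each coordinate $j$ and each interior breakpoint, the test of whether $g_{j}({\bs{w}})$ exceeds $h/t_{j}$ is the sign of $\langle a_{j},{\bs{w}}\rangle-(2h/t_{j}-1)$, an affine (degree-$1$) condition in the $ld$ inner parameters; there are $\sum_{j=1}^{l}(t_{j}-1)$ such tests and together they select the active hyperrectangle $R$. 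In the second stage, on $R$ the value $\bsg({\bs{w}})=\widetilde f\!\pa{g_{1}({\bs{w}}),\ldots,g_{l}({\bs{w}})}$ is obtained by substituting the affine $z_{j}=g_{j}({\bs{w}})$ into $\sum_{(r_{1},\ldots,r_{l})}\gamma^{R}_{(r_{1},\ldots,r_{l})}\prod_{j=1}^{l}z_{j}^{r_{j}}$; treating ${\bs{w}}$ as fixed this is a polynomial in $\theta$ of degree at most $lr+1$ (degree $\le lr$ in the $a_{j}$'s times degree $1$ in $\gamma^{R}$), so the final test $u<\bsg({\bs{w}})$ is a single such sign condition.

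Collecting the bookkeeping, the parameters entering the region-selection stage ($ld$ of them) and those entering the evaluation stage ($ld$ inner plus $\pa{\prod_{j}t_{j}}(r+1)^{l}$ coefficients) give the effective parameter count $2ld+\pa{\prod_{j=1}^{l}t_{j}}(r+1)^{l}$, exactly the bracketed factor in \eref{multi-vc-bound}; the number of elementary operations and comparisons and the degree of the polynomials involved are controlled by $U=\sum_{j=1}^{l}t_{j}+lr+l+1$ (the $\sum_{j}t_{j}$ breakpoint tests, the degree-$(lr+1)$ evaluation, and the $l$ inner products). Feeding these two quantities into the general composite-VC lemma already used for Proposition~\ref{add-vc-bound} yields the claimed bound $V^{M}_{({\bs{t}},r)}\le 2+\cro{2ld+\pa{\prod_{j=1}^{l}t_{j}}(r+1)^{l}}\log_{2}\cro{4eU\log_{2}\pa{2eU}}$.

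The point requiring the most care is the accounting that keeps the product $\prod_{j}t_{j}$ out of $U$: although the outer partition has $\prod_{j}t_{j}$ cells, the region selection is performed coordinatewise by only $\sum_{j}(t_{j}-1)$ affine tests, so the combinatorial explosion of cells is absorbed into the parameter count (the many coefficient blocks $\gamma^{R}$) rather than into the degree/operation parameter $U$; this is what makes the final bound polynomial, not exponential, in the $t_{j}$. The other delicate step is verifying that, because the inner maps are genuinely composed inside the outer polynomial, the same $a_{j}$ occur in both stages and are counted twice, producing $2ld$ rather than $ld$; and that the degree bound $lr+1$ (rather than a larger power) is obtained by measuring degrees in the parameters $\theta$ and not in ${\bs{w}}$.
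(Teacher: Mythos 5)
Your proposal is correct and follows essentially the same route as the paper's proof: a two-stage refinement of the parameter space (degree-one breakpoint tests involving only the $ld$ inner parameters, then on each cell a fixed polynomial of degree at most $lr+1$ in all $ld+\left(\prod_{j=1}^{l}t_{j}\right)(r+1)^{l}$ parameters), with the sign-pattern count of Lemma~\ref{poly-vc}, the weighted AM--GM step of Lemma~\ref{weighted-am-gm} and Lemma~\ref{tech-inequality} supplying exactly the ``composite-VC device'' you invoke, and the clipping and the restriction $a_{j}\in\cC_{d}$ handled at the end by monotonicity of the VC dimension under these operations. The only cosmetic difference is that the paper runs the first stage with $\sum_{j=1}^{l}(t_{j}+1)$ breakpoint tests (including the endpoints $0$ and $1$), which is what produces exactly $U=\sum_{j=1}^{l}t_{j}+lr+l+1$ after the AM--GM step; your count of $\sum_{j=1}^{l}(t_{j}-1)$ tests only makes the resulting constant smaller, so the stated bound still follows.
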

The proof is postponed to Appendix C. For any $r\in\N$ and ${\bs{t}}=(t_{1},\ldots,t_{l})\in(\N^{*})^{l}$, we take the countable subset $\gGamma^{M}_{({\bs{t}},r)}$ of $\bsG^{M}_{({\bs{t}},r)}$ defined as
\[
\gGamma^{M}_{({\bs{t}},r)}=\left\{f\left(g_{1}(\cdot),\ldots,g_{l}(\cdot)\right),\;f\in\gGamma^{\cH,l}_{({\bs{t}},r)},\;g_{j}=\frac{\left[\langle a_{j},\cdot\rangle+1\right]}{2},\;a_{j}\in\cC_{d}\cap\Q^{d},\;j\in\cro{l}\right\},
\]
where $\gGamma^{\cH,l}_{({\bs{t}},r)}=\left\{(\bsg\vee v_{-})\wedge v_{+},\;\bsg\in\bsS^{\cH,l}_{({\bs{t}},r)}\right\}$ with $\bsS^{\cH,l}_{({\bs{t}},r)}$ the countable subset of $\overline\bsS^{\cH,l}_{({\bs{t}},r)}$ as we introduced in the beginning of this section. 

Let $\cM=(\N^{*})^{l}\times\N$. For any $r\in\N$ and ${\bs{t}}\in(\N^{*})^{l}$,  we associate it with the weight 
\begin{equation}\label{multi-weight}
\Delta({\bs{t}},r)=\sum_{j=1}^{l}t_{j}+r.
\end{equation}
The following result shows inequality \eref{def-weight} is satisfied with the weights defined by \eref{multi-weight}.
\begin{lem}\label{multi-weight-inequality}
With the weights defined by \eref{multi-weight},  we have
$$\sum_{({\bs{t}},r)\in(\N^{*})^{l}\times\N}e^{-\Delta({\bs{t}},r)}\leq\frac{e}{e-1}.$$
\end{lem}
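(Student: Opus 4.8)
The plan is to exploit the fact that the weight $\Delta({\bs{t}},r)=\sum_{j=1}^{l}t_{j}+r$ is additively separable in its arguments, so that the exponential sum over $\cM=(\N^{*})^{l}\times\N$ splits into a product of independent one-dimensional geometric series. Since every summand $e^{-\Delta({\bs{t}},r)}$ is nonnegative, Tonelli's theorem lets me interchange and factor the (absolutely convergent) sums without any further justification.

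Concretely, I would write
\[
\sum_{({\bs{t}},r)\in(\N^{*})^{l}\times\N}e^{-\Delta({\bs{t}},r)}
=\left(\sum_{r=0}^{\infty}e^{-r}\right)\prod_{j=1}^{l}\left(\sum_{t_{j}=1}^{\infty}e^{-t_{j}}\right),
\]
and then evaluate each factor. The $r$-sum is a geometric series starting at $0$, so $\sum_{r\ge0}e^{-r}=(1-e^{-1})^{-1}=e/(e-1)$, whereas each $t_{j}$-sum starts at $1$, giving $\sum_{t\ge1}e^{-t}=e^{-1}(1-e^{-1})^{-1}=1/(e-1)$. Hence the full sum equals exactly $\frac{e}{e-1}\left(\frac{1}{e-1}\right)^{l}$.

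The concluding step is the elementary observation that $e-1>1$, so that $1/(e-1)<1$ and therefore $\left(1/(e-1)\right)^{l}\le1$ for every $l\ge1$; multiplying by the prefactor $e/(e-1)$ then yields the asserted bound $\frac{e}{e-1}$ (in fact strictly, but the stated inequality suffices). There is no genuine obstacle here: the computation is routine, and the only points meriting a moment of care are the differing lower summation limits for $r$ (from $0$) and for the $t_{j}$ (from $1$), together with the remark that the $l$ extra factors of $1/(e-1)$ can only shrink the sum and hence do not interfere with the uniform-in-$l$ bound.
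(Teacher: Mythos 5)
Your proof is correct and follows essentially the same route as the paper: both factor the sum over $(\N^{*})^{l}\times\N$ into a product of geometric series using the additive separability of $\Delta({\bs{t}},r)$, and conclude from $\sum_{t\ge1}e^{-t}=1/(e-1)\le1$. Your version is slightly more explicit in evaluating the product exactly, but the argument is the same.
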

The proof is postponed to Appendix B. With Proposition~\ref{multi-vc} and Lemma~\ref{multi-weight-inequality}, we are able to apply the model selection procedure introduced in Section~\ref{Mod-Sel} and obtain the following.
\begin{cor}\label{multi-risk-bound}
Under Assumption~\ref{model-parametrize}, no matter what the distribution of $W$ is, the estimator $\widehat\bsg(\bsX)$ given by the model selection procedure in Section~\ref{Mod-Sel} over $\left\{\gGamma^{M}_{({\bs{t}},r)},\;({\bs{t}},r)\in(\N^{*})^{l}\times\N\right\}$ with the weights defined by \eref{multi-weight} satisfies for all ${\bs{\alpha}}\in(\R_{+}^{*})^{l}$ and $L>0$,
\begin{align*}
\sup_{\bsg\et\in\cG_{\cro{v_{-},v_{+}}}({\bs{\alpha}},L)}\E\cro{h^{2}(R_{\bsg\et},R_{\widehat\bsg})}&\leq C_{\kappa,l,{\bs{\alpha}}}\left(L^{\frac{2l}{2\overline\alpha+l}}n^{-\frac{2\overline\alpha}{2\overline\alpha+l}}+\frac{d}{n}\right)\cL_{n}^{2},
\end{align*}
where $\cL_{n}=\log n\vee\log L^{2}\vee1$ and $C_{\kappa,l,{\bs{\alpha}}}$ is a constant depending only on $\kappa$, $l$ and ${\bs{\alpha}}$.
\end{cor}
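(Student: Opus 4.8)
The plan is to deduce everything from the i.i.d. master inequality \eref{iid-bound}. Proposition~\ref{multi-vc} verifies Assumption~\ref{model-VC} with $V_m=V^M_{({\bs{t}},r)}$, and Lemma~\ref{multi-weight-inequality} guarantees $\Sigma\le e/(e-1)$, so \eref{iid-bound} applies verbatim to the family $\{\gGamma^M_{({\bs{t}},r)}\}$. The whole argument then reduces to controlling, for a single well-chosen index $({\bs{t}},r)$ that will serve as a competitor in the infimum over $\cM$, the bias $h^2(R\et,\sQ_{({\bs{t}},r)})$, the weight term $\Delta({\bs{t}},r)/n$, and the complexity $V^M_{({\bs{t}},r)}L_n({\bs{t}},r)/n$. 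First I would turn the bias into a sup-norm approximation problem: by Lemma~\ref{holder-uniformly} together with the density of $\cC_d\cap\Q^d$ in $\cC_d$, the countable subset $\gGamma^M_{({\bs{t}},r)}$ is dense in $\bsG^M_{({\bs{t}},r)}$ for $\|\cdot\|_\infty$, and Assumption~\ref{model-parametrize} makes $\bsg\mapsto h^2(R\et,R_{\bsg})$ continuous for that norm, whence $h^2(R\et,\sQ_{({\bs{t}},r)})=h^2(R\et,\overline\sQ_{({\bs{t}},r)})$ with $\overline\sQ_{({\bs{t}},r)}=\{R_{\bsg}:\bsg\in\bsG^M_{({\bs{t}},r)}\}$. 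Since the $W_i$ are i.i.d. and $\bsg\et=f\circ g$, Assumption~\ref{model-parametrize} gives $h^2(R\et,R_{\bsg})=\int_{\sW}h^2(R_{\bsg\et(w)},R_{\bsg(w)})\,dP_W(w)\le\kappa^2\|\bsg\et-\bsg\|_\infty^2$, so it suffices to bound $\inf_{\bsg\in\bsG^M_{({\bs{t}},r)}}\|\bsg\et-\bsg\|_\infty$.

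The key to the approximation step is that the linear inner layer $g$ is represented exactly inside the model: choosing the inner coefficients equal to the true directions $a_j$, every competitor $\widetilde f\circ g$ obeys $\|f\circ g-\widetilde f\circ g\|_\infty\le\|f-\widetilde f\|_{\infty,\cro{0,1}^l}$, so the problem collapses to approximating the single anisotropic H\"older function $f\in\cH^{\bs{\alpha}}(L,v_{-},v_{+})$ on $\cro{0,1}^l$ by truncated tensor-product piecewise polynomials on the regular grid $M^{\cH,l}_{{\bs{t}}}$. A standard anisotropic estimate (parallel to the Besov approximation used for Corollary~\ref{besov-approximation}), valid as soon as $r\ge\max_{1\le j\le l}\alpha_j$, yields $\inf\|f-\widetilde f\|_\infty\le C({\bs{\alpha}},l)\,L\sum_{j=1}^l t_j^{-\alpha_j}$; truncation to $\cro{v_{-},v_{+}}$ only lowers the error since $f$ takes values there. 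Hence the bias contributes at most $C\kappa^2 L^2\big(\sum_{j}t_j^{-\alpha_j}\big)^2$, with a constant free of $d$.

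It then remains to insert the VC bound \eref{multi-vc-bound} and the weight \eref{multi-weight} and to optimise. I would fix $r$ to be any integer with $r\ge\max_j\alpha_j$ (depending only on ${\bs{\alpha}}$) and set $t_j=\lceil N^{1/\alpha_j}\rceil$ for a scale $N\ge1$ to be tuned; this equalises the directional errors so that $\sum_j t_j^{-\alpha_j}\asymp lN^{-1}$ and $\prod_j t_j\asymp N^{l/\overline\alpha}$. With this choice $\Delta({\bs{t}},r)=\sum_j t_j+r\lesssim_{l} N^{l/\overline\alpha}$ is dominated by the leading part of $V^M_{({\bs{t}},r)}$; the nested-logarithm factor $\log_2[4eU\log_2(2eU)]$ in \eref{multi-vc-bound} and the factor $L_n({\bs{t}},r)$ are each $\lesssim\cL_n$ once $N$ is polynomial in $nL^2$; and the summand $2ld$ in \eref{multi-vc-bound} produces precisely the announced $d/n$ contribution, the factor $l$ being absorbed into the constant. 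Balancing the bias $\asymp L^2N^{-2}$ against the main complexity term $\asymp N^{l/\overline\alpha}/n$ gives $N\asymp(nL^2)^{\overline\alpha/(2\overline\alpha+l)}$ and hence the rate $L^{2l/(2\overline\alpha+l)}n^{-2\overline\alpha/(2\overline\alpha+l)}$, the surviving logarithms assembling into $\cL_n^2$; taking this $({\bs{t}},r)$ in the infimum of \eref{iid-bound} yields the claim.

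The main obstacle is the bookkeeping of the last paragraph: one must check that the integer rounding $t_j=\lceil N^{1/\alpha_j}\rceil$ does not spoil the rate, that every logarithm — from $L_n$, from the nested logarithm in \eref{multi-vc-bound}, and from the $\log L^2$ hidden in $N$ — is genuinely absorbed into $\cL_n^2$ and no higher power, and that the constant depends only on $\kappa$, $l$, ${\bs{\alpha}}$ with the entire $d$-dependence confined to the explicit $d/n$ term. The latter hinges on the approximation error and the leading VC term $\prod_j t_j(r+1)^l$ being $d$-free, $d$ entering only through the $2ld$ summand that counts the parameters of the $l$ linear indices.
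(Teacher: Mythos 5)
Your overall architecture — apply \eref{iid-bound} with Proposition~\ref{multi-vc} and Lemma~\ref{multi-weight-inequality}, reduce the bias to a sup-norm approximation of the outer H\"older function at rate $L\sum_{j}t_j^{-\alpha_j}$, then choose $t_j\asymp(nL^2)^{\overline\alpha/((2\overline\alpha+l)\alpha_j)}$ so that the bias and the leading complexity term $\prod_j t_j/n$ balance at $L^{2l/(2\overline\alpha+l)}n^{-2\overline\alpha/(2\overline\alpha+l)}$, with the $2ld$ summand of \eref{multi-vc-bound} producing the $d/n$ term and the logarithms assembling into $\cL_n^2$ — is exactly the paper's, and that part is sound. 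There is, however, one step that fails as stated: the claim that $\gGamma^{M}_{({\bs{t}},r)}$ is dense in $\bsG^{M}_{({\bs{t}},r)}$ for $\|\cdot\|_{\infty}$, which you use to replace the bias over the countable family by the bias over the uncountable one so that you may take the inner coefficients equal to the true (possibly irrational) $a_j$. The outer functions $f\in\bsG^{\cH,l}_{({\bs{t}},r)}$ are truncated piecewise polynomials and are in general \emph{discontinuous} across the grid $M^{\cH,l}_{{\bs{t}}}$; perturbing $a_j$ to a rational approximation moves points of $\cro{0,1}^d$ across the preimages of those grid faces, so $\|f\circ g-f\circ \widetilde g\|_{\infty}$ need not tend to $0$ as $\widetilde g\to g$ uniformly. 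The same obstruction survives in $\|\cdot\|_{2,P_W}$ for an adversarial $P_W$ (e.g.\ one charging a point that the true inner map sends onto a grid face), and the corollary is claimed for arbitrary $P_W$, so the transfer to the uncountable model is a genuine gap, not a formality.

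The repair is a reordering of the triangle inequality, which is what the paper does: never compose the discontinuous approximant with a perturbed inner map. Writing $\bsg\et=\bsg\circ\bsg'$ with $\bsg\in\cH^{{\bs{\alpha}}}(L,v_-,v_+)$, one bounds $\|\bsg\circ\bsg'-f\circ g\|_{\infty}\le\|\bsg\circ\bsg'-\bsg\circ g\|_{\infty}+\|(\bsg-f)\circ g\|_{\infty}$, controls the first term by the H\"older modulus of the \emph{true} outer function, $L\sum_{j}\|\bsg'_j-g_j\|_{\infty}^{\alpha_j\wedge1}$ (made arbitrarily small by Lemma~\ref{inner-uniformly} applied to rational $a_j$), and the second by $\|\bsg-f\|_{\infty}$, handled by Proposition~\ref{holder-appro} and Lemma~\ref{holder-uniformly}. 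With that substitution your bias bound $C\kappa^2L^2\bigl(\sum_j t_j^{-\alpha_j}\bigr)^2$ holds directly over the countable family $\gGamma^{M}_{({\bs{t}},r)}$ and the remainder of your optimisation goes through unchanged.
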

The result tells that if for some ${\bs{\alpha}}\in(\R_{+}^{*})^{l}$ and $L>0$, $\bsg\et\in\cG_{\cro{v_{-},v_{+}}}({\bs{\alpha}},L)$ where the value of $l$ is smaller than $d$, we mitigate the curse of dimensionality by taking the information that $\bsg\et$ is a multiple index function. If it is not the case, a bias term will be added into the risk bound in Corollary~\ref{multi-risk-bound}. But as long as the conditional distribution $R\et$ is not far away from some set of conditional distributions $\left\{R_{\bsg},\;\bsg\in\cG_{\cro{v_{-},v_{+}}}({\bs{\alpha}},L)\right\}$, the performance of our estimator will not deteriorate too much.

When the value of $l$ is large ($l>d$), the multiple index model \eref{multiple-index} does not help to circumvent the curse of dimensionality. In this situation, we could assume $\bsg\et$ has an additive structure, i.e. $$\bsg\et({\bs{w}})=\sum_{j=1}^{l}\bsg_{j}\left(\frac{\langle a_{j},{\bs{w}}\rangle+1}{2}\right),\mbox{\quad for all\ }{\bs{w}}\in\cro{0,1}^{d},$$ where $a_{j}\in\cC_{d}$. Imposing some smoothness on $\bsg_{j}$, we can construct models and perform our model selection procedure to mitigate the curse of dimensionality. The construction is similar to a combination of what we have done in Section~\ref{structure-add} and \ref{structure-multi}. 

\section{Model selection for neural networks}\label{neural}
Throughout this section, we assume the covariates $W_{i}$ are i.i.d. on $\cro{0,1}^{d}$ with the common distribution $P_{W}$ and $R_{i}\et=R_{\bsg\et}$ for all $i\in\{1,\ldots,n\}$. The idea in this section is to estimate the regression function $\bsg\et$ by our model selection procedure based on ReLU neural networks.  

We start with setting some notations. We recall the Rectifier Linear Unit (ReLU) activation function $\sigma:\R\rightarrow\R$ defined as $$\sigma(x)=\max(0,x).$$ For any vector ${\bs{x}}=(x_{1},\ldots,x_{p})^{\top}\in\R^{p}$ with some $p\in\N^{*}$, by writing $\sigma({\bs{x}})$ we mean the activation function operating component-wise, i.e.
$$\sigma({\bs{x}})=(\max\{0,x_{1}\},\ldots,\max\{0,x_{d}\})^{\top}.$$
We formulate $\overline\bsS_{(L, p)}$ the Multi-Layer Perception (MLP) with width $p\in\N^{*}$ and depth $L\in\N^{*}$, which is a collection of functions of the form
\begin{equation}\label{neural-network}
f:\R^{d}\rightarrow\R,\quad {\bm{w}}\mapsto f({\bm{w}})=M_{L}\circ\sigma\circ M_{L-1}\circ\cdots\circ\sigma\circ M_{0}({\bs{w}}),
\end{equation}
where $$M_{l}({\bs{y}})=A_{l}({\bs{y}})+b_{l},\mbox{\quad for\ }l=0,\ldots,L,$$ $A_{l}$ is a $p\times p$ weight matrix for $l\in\{1,\ldots,L-1\}$, $A_{0}$ has size $p\times d$, $A_{L}$ has size $1\times p$ and the shift vectors $b_{l}$ is of size $p$ if $l\in\{0,\ldots,L-1\}$, a scalar if $l=L$. All the parameters in weight matrices and shift vectors vary in $\R$. We denote the MLP as $\bsS_{(L, p)}$ when it has the same architecture as $\overline\bsS_{(L, p)}$ but all the parameters in weight matrices and shift vectors vary in $\Q$. 

Besides learning all the parameters in weight matrices and shift vectors, people also enforce their algorithm on some sparse neural networks depending on the problem they want to solve. Some examples can be found in Section 7.10 of \cite{Goodfellow-et-al-2016}. Another more tuitive example for the sparse setting is the convolutional neural network (CNN) which has been widely used in computer vision, sequence analysis in bioinformatics and natural language processing.

We formulate the sparse ReLU neural networks as follows. For $l\in\{0,\ldots,L\}$, we define ${\bs{s}}_{l}$ the indicator vector in which the component is either 0 or 1. The size of the vector ${\bs{s}}_{l}$ equals to the total number of parameters in weight matrix $A_{l}$ and shift vector $b_{l}$. For $l=0$, ${\bs{s}}_{0}$ is of size $p(d+1)$, for $l\in\{1,\ldots,L-1\}$, ${\bs{s}}_{l}$ is of size $p(p+1)$ and for $l=L$, ${\bs{s}}_{L}$ is of size $p+1$. Essentially, indicator vectors ${\bs{s}}_{l}$, $l\in\{0,\ldots,L\}$ represent collections of functions based on the structure of neural networks. The last $p$ components in ${\bs{s}}_{l}$, $l\in\{0,\ldots,L-1\}$ and the last one in ${\bs{s}}_{L}$ address to the collection of shift vectors $b_{l}$. More precisely, for any component in $b_{l}$ if the corresponding position in ${\bs{s}}_{l}$ is 1, we allow this component in $b_{l}$ varies in $\R$ otherwise the value of it is fixed at 0. The other components in ${\bs{s}}_{l}$ address to the collection of weight matrices $A_{l}$ with the same way as we have introduced to $b_{l}$ after reshaping the matrices one row after another into vectors. To illustrate, we take $p=2$, $L=3$ and $l=1$ as an example. Let ${\bs{s}}_{1}=(1,0,0,1,1,0)^{\top}$ which is a vector of size $6$. As mentioned before, $A_{1}$ is a $2\times2$ matrix which we write as $$A_{1}=
\begin{pmatrix}
a_{1}&a_{3}\\
a_{4}&a_{2}
\end{pmatrix}
$$ and $b_{1}$ is of size 2. The last 2 components in ${\bs{s}}_{1}$ is $(1,0)^{\top}$ which entails that the first component in $b_{1}$ varies in $\R$ and the second is fixed at 0. We then reshape $A_{1}$ one row after another into a vector, namely $(a_{1},a_{3},a_{4},a_{2})^{\top}$. The remaining components of ${\bs{s}}_{l}$ is $(1,0,0,1)^{\top}$ which entails $a_{1}$ and $a_{2}$ are allowed varying in $\R$ while $a_{3},a_{4}=0$. To conclude, such an indicator vector ${\bs{s}}_{1}$ corresponds to the collection of weight matrices 
\begin{equation*}
A_{1}=
\begin{pmatrix}
a_{1}&0\\
0&a_{2}
\end{pmatrix}
,\mbox{\quad with\ }a_{1},a_{2}\in\R
\end{equation*}
and shift vectors $b_{1}=(b,0)^{\top}$ with $b\in\R$.

Given $p,L\in\N^{*}$ and a joint indicator vector ${\bs{s}}=({\bs{s}}_{0}^{\top},\ldots,{\bs{s}}_{L}^{\top})^{\top}$, we denote $\overline\bsS_{(L,p,{\bm{s}})}$ as the corresponding collection of functions on $\cro{0,1}^{d}$. Similarly, $\bsS_{(L,p,{\bm{s}})}$ denotes the class of functions with the same architecture as $\overline\bsS_{(L,p,{\bm{s}})}$, where the non-zero parameters vary in $\Q$ but not $\R$. Let us remark that given $L\in\N^{*}$, $p\in\N^{*}$, ${\bs{s}}$ is of size $$\overline p=p^{2}(L-1)+p(L+d+1)+1.$$ The following result gives an upper bound of the VC dimension for the class of the functions $\overline\bsS_{(L,p,{\bm{s}})}$ on $\sW=\cro{0,1}^{d}$.
\begin{prop}\label{vc-neural}
For any $L\in\N^{*}$, $p\in\N^{*}$ and ${\bs{s}}\in\{0,1\}^{\overline p}$, a fixed designed neural network $\overline\bsS_{(L,p,{\bm{s}})}$ is a VC-subgraph on $\sW$ with dimension
\begin{equation*}
V_{(L,p,{\bm{s}})}\leq (L+1)(\|{\bs{s}}\|_{0}+1)\log_{2}\cro{2\left(2e(L+1)\left(\frac{pL}{2}+1\right)\right)^{2}},
\end{equation*}
where $\|{\bs{s}}\|_{0}$ denotes the number of non-zero components in ${\bs{s}}$.
\end{prop}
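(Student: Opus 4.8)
The plan is to bound the VC-subgraph dimension of $\overline\bsS_{(L,p,{\bm{s}})}$ by the classical route: count the sign patterns that a parametrized piecewise-polynomial family can realise on a finite set of points, then invert the resulting shattering inequality. Fix the sparsity pattern ${\bm{s}}$ once and for all, so that the entries of the weight matrices and shift vectors for which ${\bm{s}}$ vanishes are frozen at $0$; every $f\in\overline\bsS_{(L,p,{\bm{s}})}$ is then determined by a single free parameter vector $\theta\in\R^{D}$ with $D=\|{\bs{s}}\|_{0}$, and I write $f=f_{\theta}$. Since the subgraph of $f_{\theta}$ meets a point $(w,t)\in\sW\times\R$ according to the sign of $f_{\theta}(w)-t$, to prove that no family of $N$ points $(w_{1},t_{1}),\ldots,(w_{N},t_{N})$ can be shattered once $N$ exceeds the announced bound it suffices to show that the number of distinct vectors $\pa{\1\{t_{1}<f_{\theta}(w_{1})\},\ldots,\1\{t_{N}<f_{\theta}(w_{N})\}}$ obtained as $\theta$ ranges over $\R^{D}$ is strictly smaller than $2^{N}$.

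The key structural observation is that, for each fixed input $w$, the map $\theta\mapsto f_{\theta}(w)$ is piecewise polynomial on $\R^{D}$, the pieces being cut out by the activation patterns of the $pL$ ReLU units. Propagating through the network shows that, on any region where all preceding activations have a fixed sign, the pre-activation of a unit sitting after the affine map $M_{j}$ is a polynomial in $\theta$ of degree at most $j+1$; in particular the final output $f_{\theta}(w)$ has degree at most $L+1$. Thus the subgraph trace at the $N$ points is governed entirely by the signs of a family of polynomials in the $D$ free parameters, namely the $pL$ pre-activations at each of the $N$ inputs together with the $N$ comparisons $f_{\theta}(w_{k})-t_{k}$, each of degree at most $L+1$.

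It then remains to count sign patterns and invert. I would organise the count layer by layer: starting from the trivial partition of $\R^{D}$, I refine it $L+1$ times, at stage $j$ splitting each current region according to the signs of the $\le Np$ degree-$(j+1)$ pre-activation polynomials contributed by layer $j$ (and, at the last stage, by the $N$ threshold polynomials). A Warren/Milnor--Thom-type bound controls the number of sign conditions of $M$ polynomials of degree $\le\delta$ in $D$ variables by $\pa{cM\delta/D}^{D}$, so each refinement multiplies the number of regions by at most $\pa{cNp(L+1)/D}^{D}$, and after the $L+1$ stages the number of realisable traces is at most $\pa{cNp(L+1)/D}^{D(L+1)}$. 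Shattering therefore forces $2^{N}\le\pa{cNp(L+1)/D}^{D(L+1)}$, that is $N\le D(L+1)\log_{2}\pa{cNp(L+1)/D}$, and a standard inversion lemma of the type ``$x\le a\log_{2}(bx)\Rightarrow x\le 2a\log_{2}(2ab)$'' turns this into a closed bound of the announced shape $(L+1)(D+1)\log_{2}\cro{2\pa{2e(L+1)(pL/2+1)}^{2}}$.

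The delicate part is not the overall scheme but the bookkeeping needed to reproduce the exact constants, and in particular the factor $pL/2+1$ and the squared argument of the logarithm. This requires selecting the sign-counting inequality and the inversion lemma so that they chain together cleanly --- the same combination that produces the $\log_{2}\cro{4eU\log_{2}(2eU)}$ factors in Propositions~\ref{add-vc-bound} and~\ref{multi-vc} --- and keeping careful track of which parameters are genuinely free at each layer, so that the effective ambient dimension is $\|{\bs{s}}\|_{0}$ rather than the full count $\overline p$; this last point is exactly the adaptation to the sparse setting. A secondary technical issue is the rigorous treatment of the ReLU kinks: the regions above are open and their boundaries, where some pre-activation vanishes, must be absorbed without inflating the count, which is handled by working with strict sign patterns and checking that the boundary contributes no subgraph traces beyond those already counted.
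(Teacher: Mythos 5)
Your proposal is correct and follows essentially the same route as the paper's proof: restrict attention to the $D=\|{\bs{s}}\|_{0}$ free parameters, refine a partition of the parameter space layer by layer using the polynomial sign-pattern bound of Lemma~\ref{poly-vc}, and invert the resulting shattering inequality via Lemma~\ref{tech-inequality}. The only cosmetic difference is that the paper keeps the sharper per-stage exponents $\sum_{i=0}^{l-1}s_{i}$ and merges the product of stage counts with the weighted AM--GM inequality (Lemma~\ref{weighted-am-gm}), whereas you use the full count $D$ at every stage; both yield the same final bound, with $\sum_{l=1}^{L+1}lp_{l}=(L+1)(pL/2+1)$ accounting for the factor you flagged.
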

The proof is postponed to Appendix C. In particular, when all the components in ${\bs{s}}$ are 1, $\overline\bsS_{(L,p,{\bm{s}})}$ is the Multi-Layer Perception $\overline\bsS_{(L,p)}$ and Proposition~\ref{vc-neural} entails the VC dimension of $\overline\bsS_{(L,p)}$ is, up to a constant, bounded by $\overline pL\log\cro{(L+1)\left(pL/2+1\right)}$.

\subsection{The Takagi class of functions}
We provide an example in this subsection where estimation based on ReLU neural networks enjoys a significant advantage. 

Let $v_{-},v_{+}\in\R$ such that $v_{-}<v_{+}$ and $\cro{v_{-},v_{+}}\subset I$. For any $t\in(-1,1)$, $l\in\N^{*}$, ${\bf{p}}=(p_{1},p_{2})\in\N^{*}\times\N^{*}$ and $K\geq0$, we denote $\cF_{\cro{v_{-},v_{+}}}(t,l,{\bf{p}},K)$ the collection of functions where for all $f\in\cF_{\cro{v_{-},v_{+}}}(t,l,{\bf{p}},K)$, it takes values in $\cro{v_{-},v_{+}}\subset I$ and is of the form 
\begin{equation}\label{def-gen-takagi}
f(w) = \sum_{k\in\N^{*}}t^{k}g\left(h^{\circ k}(w)\right),\;\mbox{\quad for all\ }w\in\cro{0,1},
\end{equation}
where $g\in\bsS_{(l,p_{1})}$ defined on $\cro{0,1}$, $\|g\|_{\infty}\leq K$, $h\in\bsS_{(l,p_{2})}$ maps $\cro{0,1}$ to $\cro{0,1}$ and $h^{\circ k}=h\circ\cdots\circ h$ denotes the resulted function when $h$ is composed with itself $k$ times. We assume the regression function $\bsg\et\in\cF_{\cro{v_{-},v_{+}}}(t,l,{\bf{p}},K)$ but without the knowledge of $t$, $l$, ${\bf{p}}$ and $K$. This type of setting provides elementary examples of self similar functions and dynamical systems (see \cite{Yamaguti1983WeierstrasssFA} for example). It also includes a number of interesting functions belonging to the Takagi class (\cite{daubechies2019} p.28), which is defined as the collection of all the functions of the form
\begin{equation*}
f=\sum_{k\in\N^{*}}c_{k}h^{\circ k},
\end{equation*}
where $(c_{k})_{k\in\N^{*}}$ is an absolutely summable sequence of real numbers and $h$ is the hat function defined on $\cro{0,1}$ as 
\begin{equation}\label{hat-function}
h(w)=\left\{
\begin{aligned}
&2w&,&\quad 0\leq w\leq\frac{1}{2},\\
&2(1-w)&,&\quad \frac{1}{2}< w\leq 1.
\end{aligned}
\right.
\end{equation}

Let $\cM=\N^{*}\times\N^{*}$. The family of models we consider here is given by $$\left\{\gGamma_{(L,p)},\;(L,p)\in\N^{*}\times\N^{*}\right\},$$ where $\gGamma_{(L,p)}=\left\{(\bsg\vee v_{-})\wedge v_{+},\;\bsg\in\bsS_{(L,p)}\right\}$. We note that for each $\gGamma_{(L,p)}$, it is a countable collection of functions on $\cro{0,1}$ and satisfies Assumption~\ref{model-VC} with $V_{(L,p)}$, up to a constant, bounded by $\overline pL\log\cro{(L+1)\left(pL/2+1\right)}$. For any $(L,p)\in\cM$, we associate it with the weight 
\begin{equation}\label{tagaki-weight}
\Delta(L,p)=L+p.
\end{equation}
As an immediate consequence, we have $\Sigma=\sum_{(L,p)\in(\N^{*})^{2}}e^{-\Delta(L,p)}\leq1$ which satisfies the inequality \eref{def-weight}. Therefore, we are able to apply the model selection procedure introduced in Section~\ref{Mod-Sel} and obtain the following result.
\begin{cor}\label{takagi}
Under Assumption~\ref{model-parametrize}, whatever the distribution of $W$, the estimator $\widehat\bsg(\bsX)$ given by the model selection procedure in Section~\ref{Mod-Sel} over the family $\left\{\gGamma_{(L,p)},\;(L,p)\in\N^{*}\times\N^{*}\right\}$ with the weights defined by \eref{tagaki-weight} satisfies for all $t\in(-1,1)$, $l\in\N^{*}$, ${\bf{p}}\in(\N^{*})^{2}$ and $K\geq0$
\begin{equation*}
\sup_{\bsg\et\in\cF_{\cro{v_{-},v_{+}}}(t,l,{\bf{p}},K)}\E\cro{h^{2}\pa{R_{\bsg\et},R_{\widehat \bsg}}}\leq C_{\kappa,t,l,{\bf{p}},K}\frac{1}{n}(1+\log n)^{4},
\end{equation*}
where $C_{\kappa,t,l,{\bf{p}},K}$ is a constant depending on $\kappa,t,l,{\bf{p}},K$ only.
\end{cor}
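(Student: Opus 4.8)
The plan is to derive the bound from the i.i.d.\ risk inequality \eref{iid-bound}, reducing everything to the control of one approximation term and one complexity term for a single, well-chosen model $(L,p)$. Since here $R_{i}\et=R_{\bsg\et}$ with $\bsg\et\in\cF_{\cro{v_{-},v_{+}}}(t,l,{\bf{p}},K)$ a univariate function, \eref{iid-bound} gives, up to the absolute constant $c_{2}(c_{3}+\Sigma)$ with $\Sigma\le1$,
\[
\E\cro{h^{2}(R_{\bsg\et},R_{\widehat\bsg})}\le\inf_{(L,p)}\cro{h^{2}(R_{\bsg\et},\sQ_{(L,p)})+\frac{\Delta(L,p)}{n}+\frac{V_{(L,p)}}{n}L_{n}(L,p)}.
\]
First I would use Assumption~\ref{model-parametrize}: because $h(R_{\gamma},R_{\gamma'})\le\kappa|\gamma-\gamma'|$ and the pseudo-Hellinger loss integrates pointwise against $P_{W}$, every $\bsg\in\gGamma_{(L,p)}$ satisfies $h^{2}(R_{\bsg\et},R_{\bsg})=\int_{\cro{0,1}}h^{2}\pa{R_{\bsg\et(w)},R_{\bsg(w)}}\,dP_{W}(w)\le\kappa^{2}\norm{\bsg\et-\bsg}_{\infty}^{2}$, so the distribution of $W$ disappears and it suffices to approximate $\bsg\et$ in supremum norm by elements of $\gGamma_{(L,p)}$.

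The core of the proof is the approximation claim: for every $\eps\in(0,1)$ there exist $L=O(l\log(1/\eps))$, $p=O(p_{1}+p_{2})$ and $\bsg\in\gGamma_{(L,p)}$ with $\norm{\bsg\et-\bsg}_{\infty}\le\eps$. To prove it I would truncate the series \eref{def-gen-takagi} at $N$ terms; since $\norm{g}_{\infty}\le K$ and $|t|<1$, the tail $\sum_{k>N}t^{k}g(h^{\circ k})$ is bounded by $K|t|^{N+1}/(1-|t|)$, which is $\le\eps/2$ as soon as $N\gtrsim\log(1/\eps)/\log(1/|t|)$. The partial sum $f_{N}=\sum_{k=1}^{N}t^{k}g(h^{\circ k})$ is then realised as one feedforward ReLU network by \emph{unrolling} the recursion: a channel of width $p_{2}$ iterates the fixed network $h$ to generate $h^{\circ 1},\ldots,h^{\circ N}$ in $N$ successive blocks of depth $l$ (using that $h$ maps $\cro{0,1}$ into itself), a parallel channel of width $p_{1}$ evaluates the fixed network $g$ at each iterate, and a constant-width accumulator adds $t^{k}g(h^{\circ k})$ at the $k$-th block; signed iterates and the running sum are transported through layers via $x=\sigma(x)-\sigma(-x)$, and the factors $t^{k}$ are absorbed into the linear maps. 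This places $f_{N}$ in $\overline\bsS_{(L,p)}$ with $L=O(lN)$ and $p=O(p_{1}+p_{2})$. Finally, rationalising the finitely many weights (a density argument in the spirit of Lemma~\ref{besov-uniformly}, valid because a network is continuous in its parameters on the compact domain $\cro{0,1}$) and applying the non-expansive truncation $(\cdot\vee v_{-})\wedge v_{+}$, which only improves the error since $\bsg\et$ is $\cro{v_{-},v_{+}}$-valued, yields the desired $\bsg\in\gGamma_{(L,p)}$ with $\norm{\bsg\et-\bsg}_{\infty}\le\eps$.

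It then remains to assemble. For this $(L,p)$, Proposition~\ref{vc-neural} bounds $V_{(L,p)}$ up to a constant by $\overline p\,L\log\cro{(L+1)(pL/2+1)}$ with $\overline p=p^{2}(L-1)+p(L+d+1)+1$; with $l,p_{1},p_{2}$ fixed, $L=O(\log(1/\eps))$ and $p=O(1)$, this is $V_{(L,p)}\lesssim\log^{2}(1/\eps)\log\log(1/\eps)$, while the weight \eref{tagaki-weight} gives $\Delta(L,p)=L+p=O(\log(1/\eps))$ and $L_{n}(L,p)=1+\log_{+}(n/V_{(L,p)})\lesssim\log n$. Substituting the bias bound $\kappa^{2}\eps^{2}$ and these quantities into the displayed inequality and choosing $\eps$ with $\eps^{2}\asymp 1/n$ (equivalently $N\asymp\log n$), the geometric tail makes the bias and the $\Delta/n$ term negligible next to the complexity term, whose dominant order is $V_{(L,p)}L_{n}(L,p)/n\lesssim\log^{3}n\log\log n/n\le C\,n^{-1}(1+\log n)^{4}$; absorbing all factors depending on $\kappa,t,l,{\bf{p}},K$ into the constant yields the claim. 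I expect the main obstacle to be the explicit unrolling of the self-similar structure in the approximation step — in particular synchronising the depths of the $h$-iteration and the delayed $g$-evaluations, and checking that the signed accumulator is carried across ReLU layers while keeping the width at $O(p_{1}+p_{2})$.
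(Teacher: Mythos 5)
Your proposal is correct and follows essentially the same route as the paper: truncate the series at $N\asymp\log n$ terms so the geometric tail is $O(|t|^{N+1})$, realise the partial sum as a ReLU network of depth $O(lN)$ and width $O(p_{1}+p_{2})$, pass to rational weights by density, convert the Hellinger bias to a sup-norm bias via Assumption~\ref{model-parametrize}, and balance against the $V_{(L,p)}L_{n}/n\lesssim\log^{4}n/n$ complexity term in \eref{iid-bound}. The only difference is that you sketch the unrolling construction for the partial sum explicitly, whereas the paper invokes Proposition~4.4 of Daubechies et al.\ (2019) to place $\bsg\et_{m}$ in $\overline\bsS_{(l(m+1),p_{1}+p_{2}+2)}$; the content is the same.
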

The risk bound is optimal up to the logarithmic factors since any two probabilities with a Hellinger distance smaller than $\cO(1/\sqrt{n})$ are indistinguishable. To comment upon this result further, we consider a specific example of $\bsg\et$ in Gaussian regression problem with a known variance $\sigma>0$. We parametrize the exponential family $\widetilde\sQ=\left\{R_{\gamma},\;\gamma\in I\right\}$ by taking $\gamma=\theta/(2\sqrt{2}\sigma)$, where $\theta$ is the mean so that by Proposition~2 of \cite{Baraud2020}, Assumption~\ref{model-parametrize} is satisfied with $\kappa=1$ and $I=\R$. We therefore can take $v_{-}$ the smallest integer in computer and $v_{+}$ the largest so that $\cro{v_{-},v_{+}}\subset I$. Let $\bsg\et=\sum_{k\in\N^{*}}2^{-k}h^{\circ k}$ with $h$ defined by \eref{hat-function} be a function belonging to the Takagi class. This corresponds to the situation where $g$ is the identity function on $\cro{0,1}$ so that $K=1$ and $t=1/2$ in the general formalization \eref{def-gen-takagi}. We also observe that $g\in\bsS_{(1,1)}$ and $h\in\bsS_{(1,2)}$ by rewriting them into the following forms
$$g(x)=\sigma\left(x+0\right),\mbox{\quad for all\ }x\in\cro{0,1}$$ and
$$h(w)=\left(\begin{array}{cc}
2&-4\\
\end{array}
\right)\sigma\left\{\left(\begin{array}{c}1\\ 1\end{array}\right)w+\left(\begin{array}{c}0\\ -\frac{1}{2}\end{array}\right)\right\},\mbox{\quad for all\ }w\in\cro{0,1}.$$ 
Therefore, we have $\bsg\et\in\cF_{\cro{v_{-},v_{+}}}(1/2,1,(1,2),1)$. According to Corollary~\ref{takagi}, the estimator $\widehat\bsg(\bsX)$ obtained by the model selection procedure introduced in Section~\ref{Mod-Sel} based on the fully connected ReLU neural networks converges to $\bsg\et$ with a rate of order $1/n$ up to logarithmic factors. However, $\bsg\et$ is nowhere differentiable hence it has very little smoothness in the classical sense. Estimation based on the traditional models will result a miserably slow rate considering the minimax converge rate for an $\alpha$-smooth function is of order $n^{-2\alpha/(2\alpha+1)}$.

\subsection{Composite H\"older class of functions}
We have seen in the last subsection that the estimator $\widehat\bsg(\bsX)$ based on MLPs converges to the truth with an optimal rate for some class of functions. In this subsection, we continue to consider the problem of circumventing the curse of dimensionality based on deep ReLU neural networks. A natural structure of the regression function $\bsg\et$ for neural networks to exhibit advantages could be a composition of several functions which has been considered by \cite{Schmidt2020} for Gaussian regression. We shall reconsider it from another point of view where we perform our model selection procedure based on the result of controlling the VC dimension of sparse ReLU neural networks.

Let us introduce notations first. Given $t\in\N^{*}$ and $\alpha\in\R_{+}^{*}$, we define $\cC^{\alpha}_{t}(D,K)$ an $\alpha$-H\"older ball with radius $K$ as the collection of functions $f:D\subset\R^{t}\rightarrow\R$ such that $$\sum_{\substack{{\bs{\beta}}=(\beta_{1},\ldots,\beta_{t})\in\N^{t}\\ \sum_{j=1}^{t}\beta_{j}<\alpha}}\|\partial^{\bs{\beta}}f\|_{\infty}+\sum_{\substack{{\bs{\beta}}\in\N^{t}\\ \sum_{j=1}^{t}\beta_{j}=\lfloor\alpha\rfloor}}\sup_{\substack{{\bs{x}},{\bs{y}}\in D\\{\bs{x}}\not={\bs{y}}}}\frac{\left|\partial^{\bs{\beta}}f({\bs{x}})-\partial^{\bs{\beta}}f({\bs{y}})\right|}{|{\bs{x}}-{\bs{y}}|_{\infty}^{\alpha-\lfloor\alpha\rfloor}}\leq K,$$ where for any ${\bs{\beta}}=(\beta_{1},\ldots,\beta_{t})\in\N^{t}$, $\partial^{\bs{\beta}}=\partial^{\beta_{1}}\cdots\partial^{\beta_{t}}$ and for any ${\bs{x}}=(x_{1},\ldots,x_{t})\in\R^{t}$, $|{\bs{x}}|_{\infty}=\max_{i=1,\ldots,t}|x_{i}|$. 

For any $k\in\N^{*}$, ${\bf{d}}=(d_{0},\ldots,d_{k})\in(\N^{*})^{k+1}$, ${\bf{t}}=(t_{0},\ldots,t_{k})\in(\N^{*})^{k+1}$, ${\bm{\alpha}}=(\alpha_{0},\ldots,\alpha_{k})\in(\R_{+}^{*})^{k+1}$ and $K\geq0$, we denote $\cF_{\cro{v_{-},v_{+}}}(k,{\bf{d}},{\bf{t}},{\bm{\alpha}},K)$ the class of functions with values in $\cro{v_{-},v_{+}}\subset I$ as, 
\begin{align*}
\cF_{\cro{v_{-},v_{+}}}(k,{\bf{d}},{\bf{t}},{\bm{\alpha}},K)=&\left\{f_{k}\circ\cdots\circ f_{0},\;f_{i}=(f_{ij})_{j}:\cro{a_{i},b_{i}}^{d_{i}}\rightarrow\cro{a_{i+1},b_{i+1}}^{d_{i+1}},\right.\\
&\left.f_{ij}\in\cC^{\alpha_{i}}_{t_{i}}(\cro{a_{i},b_{i}}^{t_{i}},K),\;\mbox{for some } |a_{i}|, |b_{i}|\leq K\right\},
\end{align*}
where $d_{k+1}=1$. We assume the regression function $\bsg\et=\bsg_{k}\circ\cdots\circ\bsg_{0}\in\cF_{\cro{v_{-},v_{+}}}(k,{\bf{d}},{\bf{t}},{\bm{\alpha}},K)$ but without the knowledge of $k$, ${\bf{d}}$, ${\bf{t}}$, ${\bs{\alpha}}$ and $K$. 

To approximate these classes of functions $\cF_{\cro{v_{-},v_{+}}}(k,{\bf{d}},{\bf{t}},{\bm{\alpha}},K)$, we consider the sparse ReLU neural networks. Recall that for any $(L,p)\in(\N^{*})^{2}$, ${\bs{s}}=({\bs{s}}_{0}^{\top},\ldots,{\bs{s}}_{L}^{\top})^{\top}\in\{0,1\}^{\overline p}$ with $\overline p=p^{2}(L-1)+p(L+d+1)+1$ indicating the sparsity design of a MLP with architecture $(L,p)$. More precisely, setting $\cM=(\N^{*})^{2}\times\{0,1\}^{\overline p}$, we consider the family of models based on sparse ReLU neural networks $\left\{\bsG_{(L,p,{\bs{s}})},\;(L,p,{\bs{s}})\in(\N^{*})^{2}\times\{0,1\}^{\overline p}\right\}$, where $\bsG_{(L,p,{\bs{s}})}=\left\{(\bsg\vee v_{-})\wedge v_{+},\;\bsg\in\overline\bsS_{(L,p,{\bs{s}})}\right\}$. The VC dimension $V_{(L,p,{\bs{s}})}$ of each $\bsG_{(L,p,{\bs{s}})}$ is bounded by Proposition~\ref{vc-neural}. For each $(L,p,{\bs{s}})\in\cM$, we take the countable subset $\gGamma_{(L,p,{\bs{s}})}$ of $\bsG_{(L,p,{\bs{s}})}$ as $\gGamma_{(L,p,{\bs{s}})}=\left\{(\bsg\vee v_{-})\wedge v_{+},\;\bsg\in\bsS_{(L,p,{\bs{s}})}\right\}$.

For each $(L,p,{\bm{s}})\in(\N^{*})^{2}\times\{0,1\}^{\overline p}$, we associate it with the weight
\begin{equation}\label{neural-weight}
\Delta(L,p,{\bm{s}})=\left\{
\begin{aligned}
&\|{\bs{s}}\|_{0}\log\left(\frac{2e\overline p}{\|{\bs{s}}\|_{0}}\right)+p+L&,&\mbox{\quad $\|\bs{s}\|_{0}\not=0$,}\\
&p+L &,&\mbox{\quad $\|\bs{s}\|_{0}=0$.}
\end{aligned}
\right.
\end{equation}
The following result shows \eref{def-weight} is satisfied with the associated weights defined by \eref{neural-weight}.
\begin{lem}\label{neural-weight-inequality}
For any $L\in\N^{*}$, $p\in\N^{*}$ and ${\bs{s}}=({\bs{s}}_{0}^{\top},\ldots,{\bs{s}}_{L}^{\top})^{\top}\in\{0,1\}^{\overline p}$, we define $\Delta(L,p,{\bm{s}})$ by \eref{neural-weight}. Then, $$\sum_{(L,p,{\bm{s}})\in(\N^{*})^{2}\times\{0,1\}^{\overline p}}e^{-\Delta(L,p,{\bm{s}})}\leq2.$$
\end{lem}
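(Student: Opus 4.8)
The plan is to evaluate the double sum by first fixing the architecture $(L,p)\in(\N^{*})^{2}$ and summing over all sparsity patterns ${\bs{s}}\in\{0,1\}^{\overline p}$, and only afterwards summing over $(L,p)$. The key observation is that $\Delta(L,p,{\bs{s}})$ depends on ${\bs{s}}$ solely through $k=\|{\bs{s}}\|_{0}$, while the common factor $e^{-(p+L)}$ appears in every term. Since exactly $\binom{\overline p}{k}$ vectors in $\{0,1\}^{\overline p}$ satisfy $\|{\bs{s}}\|_{0}=k$, the inner sum collapses to a one-dimensional sum over $k\in\{0,1,\ldots,\overline p\}$ weighted by binomial coefficients.

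First I would isolate the $k=0$ term, which by the second branch of \eref{neural-weight} contributes exactly $e^{-(p+L)}$. For $k\ge1$, substituting the first branch of the weight gives each individual summand the value $\pa{k/(2e\overline p)}^{k}e^{-(p+L)}$, so the total contribution of all patterns with $\|{\bs{s}}\|_{0}=k$ equals $\binom{\overline p}{k}\pa{k/(2e\overline p)}^{k}e^{-(p+L)}$. The one genuinely load-bearing estimate is the elementary bound $\binom{\overline p}{k}\le\pa{e\overline p/k}^{k}$, which is exactly cancelled by the factor $(2e\overline p)^{-k}$ coming from the weight:
\begin{equation*}
\binom{\overline p}{k}\pa{\frac{k}{2e\overline p}}^{k}\le\pa{\frac{e\overline p}{k}}^{k}\pa{\frac{k}{2e\overline p}}^{k}=2^{-k}.
\end{equation*}
This is precisely the purpose of inserting the logarithmic factor $\log\pa{2e\overline p/\|{\bs{s}}\|_{0}}$ into the weight: it is tailored so that the combinatorial growth of the number of sparsity patterns is dominated by a convergent geometric series, independently of $\overline p$.

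Summing the geometric series then yields, for each fixed $(L,p)$,
\begin{equation*}
\sum_{{\bs{s}}\in\{0,1\}^{\overline p}}e^{-\Delta(L,p,{\bs{s}})}\le e^{-(p+L)}\cro{1+\sum_{k\ge1}2^{-k}}\le 2e^{-(p+L)}.
\end{equation*}
Finally I would factor the remaining sum over $(L,p)$ into a product of two geometric series, obtaining $\sum_{L\ge1}\sum_{p\ge1}2e^{-(p+L)}=2\pa{\sum_{L\ge1}e^{-L}}^{2}=2/(e-1)^{2}$, which is comfortably below the stated bound $2$. There is essentially no obstacle beyond bookkeeping; the only subtle point is handling the $k=0$ case separately to avoid the $0^{0}$ ambiguity, and the constant $2$ is very conservative (the true value is about $0.68$).
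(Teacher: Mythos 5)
Your proposal is correct and follows essentially the same route as the paper's proof: decompose by the sparsity level $k=\|{\bs{s}}\|_{0}$, use $\binom{\overline p}{k}\le\pa{e\overline p/k}^{k}$ to cancel the $\log\pa{2e\overline p/k}$ term in the weight down to a geometric factor $2^{-k}$, and then sum the geometric series in $k$, $p$ and $L$. The only cosmetic difference is that the paper stops at the crude bound $\sum_{L}\sum_{p}2e^{-L-p}\le2$ rather than computing $2/(e-1)^{2}$ explicitly.
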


For any ${\bs{\alpha}}=(\alpha_{0},\ldots,\alpha_{k})\in(\R_{+}^{*})^{k+1}$, we define the effective smoothness indices by $\alpha'_{i}=\alpha_{i}\prod_{l=i+1}^{k}\left(\alpha_{l}\wedge1\right)$ for all $i\in\{0,\ldots,k-1\}$ and $\alpha'_{k}=\alpha_{k}$. We denote $\phi_{n}=\max_{i=0,\ldots,k}n^{-2\alpha'_{i}/(2\alpha'_{i}+t_{i})}$. Combining the result of Lemma~\ref{neural-weight-inequality} and Proposition~\ref{vc-neural}, we are now able to apply the model selection procedure in Section~\ref{Mod-Sel}.
The following result entails the estimator $\widehat\bsg(\bsX)$ converges to $\bsg\et$ with a rate of order $\phi_{n}$ up to logarithm factors with respect to the distance $d(\bsg\et,\widehat\bsg)=h^{2}(R_{\bsg\et},R_{\widehat\bsg})$. 
\begin{cor}\label{composite-holder}
Under Assumption~\ref{model-parametrize}, whatever the distribution of $W$, the estimator $\widehat\bsg(\bsX)$ given by the model selection procedure in Section~\ref{Mod-Sel} over the family $\left\{\gGamma_{(L,p,{\bs{s}})},\;(L,p,{\bs{s}})\in(\N^{*})^{2}\times\{0,1\}^{\overline p}\right\}$ with the weights defined by \eref{neural-weight} satisfies with a sufficiently large $n$, for all $k\in\N^{*}$, $K\geq0$, ${\bf{d}}\in(\N^{*})^{k+1}$, ${\bf{t}}\in(\N^{*})^{k+1}$ with $t_{j}\leq d_{j}$ for $j\in\{0,\ldots,k\}$ and ${\bm{\alpha}}\in(\R_{+}^{*})^{k+1}$, 
\begin{equation}\label{composite-holder-risk}
\sup_{\bsg\et\in\cF_{\cro{v_{-},v_{+}}}(k,{\bf{d}},{\bf{t}},{\bm{\alpha}},K)}\E\cro{h^{2}\pa{R_{\bsg\et},R_{\widehat \bsg}}}\leq C_{\kappa,k,{\bf{d}},{\bf{t}},{\bm{\alpha}},K}\phi_{n}\log^{4}n,
\end{equation}
where $C_{\kappa,k,{\bf{d}},{\bf{t}},{\bm{\alpha}},K}$ is a constant depending on $\kappa,k,{\bf{d}},{\bf{t}},{\bm{\alpha}},K$ only.
\end{cor}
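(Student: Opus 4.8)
The plan is to deduce \eqref{composite-holder-risk} directly from the i.i.d.\ risk bound \eqref{iid-bound} of Theorem~\ref{thm-1} applied to the family $\left\{\gGamma_{(L,p,{\bs{s}})}\right\}$, by exhibiting, for a well-chosen sparse architecture $(L,p,{\bs{s}})$, a good sup-norm approximation of $\bsg\et$ together with the VC control of Proposition~\ref{vc-neural}. First I would reduce the Hellinger bias term to an $\L_\infty$-approximation error. Since $\bsg\et$ takes values in $\cro{v_{-},v_{+}}$ and every element of $\gGamma_{(L,p,{\bs{s}})}$ is truncated to this interval, Assumption~\ref{model-parametrize} gives
\begin{equation*}
h^{2}(R\et,\sQ_{(L,p,{\bs{s}})})\leq\kappa^{2}\inf_{\bsg\in\gGamma_{(L,p,{\bs{s}})}}\norm{\bsg\et-\bsg}_{\infty}^{2},
\end{equation*}
because $h\pa{R_{\bsg\et(w)},R_{\bsg(w)}}\leq\kappa|\bsg\et(w)-\bsg(w)|$ pointwise, so it suffices to approximate $\bsg\et$ in sup-norm by a truncated, rational-parameter network; truncation only improves the approximation as $\bsg\et\in\cro{v_{-},v_{+}}$.

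Next I would establish the approximation step, which is the technical core and which I would obtain by adapting the deep ReLU construction of \cite{Schmidt2020} to our fixed-architecture, sparse formulation \eqref{neural-network}. For each layer $i$ the components $\bsg_{ij}\in\cC^{\alpha_{i}}_{t_{i}}$ are approximated by subnetworks, and since the later maps are at worst Lipschitz the error committed at layer $i$ is amplified by $\prod_{l>i}(\alpha_{l}\wedge1)$ once propagated to the output; this is exactly what produces the effective indices $\alpha'_{i}=\alpha_{i}\prod_{l=i+1}^{k}(\alpha_{l}\wedge1)$. Stacking these subnetworks yields, for every scale $N\in\N^{*}$, an architecture with depth $L\asymp\log N$ and $\|{\bs{s}}\|_{0}\lesssim N\log N$ active parameters such that
\begin{equation*}
\inf_{\bsg\in\gGamma_{(L,p,{\bs{s}})}}\norm{\bsg\et-\bsg}_{\infty}\leq C\max_{0\leq i\leq k}N^{-\alpha'_{i}/t_{i}}
\end{equation*}
uniformly over $\cF_{\cro{v_{-},v_{+}}}(k,{\bf{d}},{\bf{t}},{\bm{\alpha}},K)$; passing from real to rational parameters costs nothing by continuity of \eqref{neural-network} in its weights and density of $\Q$.

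It then remains to balance the three terms in \eqref{iid-bound}. By Proposition~\ref{vc-neural} the chosen architecture has $V_{(L,p,{\bs{s}})}\lesssim\|{\bs{s}}\|_{0}L\log(pL)\asymp N(\log N)^{3}$, while \eqref{neural-weight} gives $\Delta(L,p,{\bs{s}})\lesssim\|{\bs{s}}\|_{0}\log(\overline p/\|{\bs{s}}\|_{0})+p+L\asymp N(\log N)^{2}$, which is negligible. Choosing $N$ so that $N/n\asymp\phi_{n}$, i.e.\ $N\asymp\max_{i}n^{t_{i}/(2\alpha'_{i}+t_{i})}$, equates the squared bias $\kappa^{2}\max_{i}N^{-2\alpha'_{i}/t_{i}}\asymp\phi_{n}$ with the complexity term: since $\log N\asymp\log n$ and $L_{n}(m)=1+\log_{+}(n/V_{(L,p,{\bs{s}})})\asymp\log n$, one gets $\tfrac{1}{n}V_{(L,p,{\bs{s}})}L_{n}(m)\asymp\phi_{n}(\log N)^{3}\log n\asymp\phi_{n}\log^{4}n$, which is \eqref{composite-holder-risk}. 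Adaptivity is automatic: the infimum over $\cM=(\N^{*})^{2}\times\{0,1\}^{\overline p}$ in \eqref{iid-bound} selects this architecture without any prior knowledge of $k$, ${\bf{d}}$, ${\bf{t}}$, ${\bm{\alpha}}$ or $K$, and Lemma~\ref{neural-weight-inequality} ensures the summability \eqref{def-weight}.

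The hard part is the approximation statement of the second paragraph: one must verify that a single sparse architecture realises the Schmidt--Hieber rate while carefully tracking how the Lipschitz constants of the outer maps convert the per-layer H\"older errors into the effective smoothness $\alpha'_{i}$, and check that the width and sparsity pattern required by the composition can be encoded by an indicator vector ${\bs{s}}\in\{0,1\}^{\overline p}$ of the form used in \eqref{neural-weight} and Proposition~\ref{vc-neural}. The remaining bookkeeping—truncation, rational parameters, and the precise accumulation of the logarithmic factors into $\log^{4}n$—is routine once the approximation bound and the VC bound are in hand.
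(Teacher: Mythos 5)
Your proposal is correct and follows essentially the same route as the paper: both reduce the Hellinger bias to a sup-norm approximation, invoke the sparse-network approximation result from the proof of Theorem~1 of \cite{Schmidt2020} with $L\asymp\log n$, $p\asymp n\phi_{n}$, $\|{\bs{s}}\|_{0}\asymp n\phi_{n}\log n$, control $V_{(L,p,{\bs{s}})}$ via Proposition~\ref{vc-neural} and $\Delta(L,p,{\bs{s}})$ via \eref{neural-weight}, and conclude from \eref{iid-bound}. The only point you compress is the passage to rational parameters, which the paper handles with a dedicated density lemma exploiting that the Schmidt--Hieber construction has all non-zero weights bounded by $1$ (so that the polynomial dependence of the network on its parameters can be controlled uniformly); your ``by continuity'' remark is fine in spirit but does rely on that boundedness.
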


By Corollary~\ref{composite-holder}, we provide a theoretical guarantee for an alternative estimation procedure based on sparse ReLU neural networks besides maximum likelihood estimation (MLE) discussed in \cite{Schmidt2020} for the Gaussian regression. Our procedure is, however, designed to handle the regression problems in exponential families and not only restricted to the Gaussian case. It also endows the estimator an additional robust property compared to MLE. When there is a misspecification or data contamination, as long as the bias remains small compared to the right hand side of \eref{composite-holder-risk}, the behaviour of our estimator will be of the same order as the model is exact.

\section{Variable selection in exponential families}\label{variable-selection}
In this section, we propose to handle variable selection problem in exponential families by model selection. The statistical setting is stated as follows. Assuming that $W_{i}$ are i.i.d. on $\sW\subset\R^{p}$ and for each $i\in\{1,\ldots,n\}$, we observe $X_{i}=(W_{i}^{(1)},\ldots,W_{i}^{(p)},Y_{i})$ where $W_{i}^{(j)}$ represents the observation of the explanatory variable $W^{(j)}$ in the $i$-th experiment. The value $p$ stands for the number of the explanatory variables. This number may be large, possibly larger than $n$. The exponential family $\widetilde\sQ=\{R_{\gamma}=r_{\gamma}\cdot\nu,\; \gamma\in I\}$ is parametrized in its natural form, i.e. for all $y\in\sY$, $\gamma\in I$,
\begin{equation*}
r_{\gamma}(y)=e^{\gamma T(y)-B(\gamma)},
\end{equation*}
which is the particular situation when taking $u$ as the identity function in \eref{gen-den-2}. We assume that there exists an unknown function $\bsg\et$ on $\sW$ taking values in $\cro{v_{-},v_{+}}\subset I$ with $v_{-}<v_{+}$ as a linear combination of some subset of the $p$ explanatory variables, namely
\[
\bsg\et({\bs{w}})=\sum_{j=1}^{p}\gamma_{j}\et w^{(j)} \text{\quad for all\ } {\bs{w}}=(w^{(1)},\ldots,w^{(p)})\in\sW,
\]
with $\gamma_{j}\et\in\R$, such that the conditional distribution of $Y_{i}$ given $W_{i}$ belongs to a natural exponential family with natural parameter $\bsg\et(W_{i})$, i.e. $R_{\bsg\et(W_{i})}$. Variable selection problem attributes to estimate this unknown $\bsg\et$ together with selecting the most significant explanatory variables among the $p$ possible ones. 

We set $\Omega=\{1,\ldots,p\}$ and $\cM=\cP(\Omega)$. For any subset $m\in\cM$, we define $\overline\bsS_{m}$ as the collection of functions $\bsg$ on $\sW$ of the form
\begin{equation}\label{models-var}
\bsg({\bs{w}})=\sum_{j=1}^{p}\gamma_{j}w^{(j)} \text{\quad for all\ } {\bs{w}}\in\sW,
\end{equation}
where the coordinates of $\widetilde\gamma=(\gamma_{1},\ldots,\gamma_{p})\in\R^{p}$ are all zeros except for those indices $j\in m$. By convention, $\overline\bsS_{m}=\{0\}$ if $m=\varnothing$. We define $\bsS_{m}$ as the collection of functions of the form given by \eref{models-var} with a restriction to the rational combinations, i.e. for any $\bsg\in\bsS_{m}$, $\widetilde\gamma=(\gamma_{1},\ldots,\gamma_{p})\in\Q^{p}$. For each $m\in\cM$, let us define $\bsG_{m}=\left\{(\bsg\vee v_{-})\wedge v_{+},\;\bsg\in\overline\bsS_{m}\right\}$ and $\gGamma_{m}=\left\{(\bsg\vee v_{-})\wedge v_{+},\;\bsg\in\bsS_{m}\right\}$. With the fact that $\Q$ is dense in $\R$, $\bsS_{m}$ is dense in $\overline\bsS_{m}$ for the topology of pointwise convergence. One can observe that such dense property also holds for each $\gGamma_{m}$ in $\bsG_{m}$, $m\in\cM$.

We define $\cM_{o}=\left\{m_{d}=\{1,\ldots,d\},\;1\leq d\leq p\right\}\cup\varnothing$. For each $m\in\cM$, we associate it with the weight
\begin{equation}\label{var-sele-weight}
\Delta(m)=\left\{
\begin{aligned}
&2\log(1+|m|) &,&\mbox{\quad $m\in\cM_{o}$,}\\
&|m|\log\left(\frac{2ep}{|m|}\right) &,&\mbox{\quad $m\in\cM\backslash\cM_{o}$.}
\end{aligned}
\right.
\end{equation}
The following result shows with the weights defined by \eref{var-sele-weight}, inequality \eref{def-weight} is satisfied.
\begin{lem}\label{complete-weight}
Let $\cM=\cP(\Omega)$. For any $m\in\cM$, the weight is defined by \eref{var-sele-weight}. Then $\Sigma=\sum_{m\in\cM}e^{-\Delta(m)}\leq1+\pi^{2}/6$.
\end{lem}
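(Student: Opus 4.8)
The plan is to split the sum $\Sigma=\sum_{m\in\cM}e^{-\Delta(m)}$ according to the two regimes in the definition \eref{var-sele-weight}, writing $\cM=\cM_{o}\cup(\cM\setminus\cM_{o})$, and to bound each piece separately. The point is that $\cM_{o}$ contains only the $p+1$ nested sets $\varnothing,m_{1},\ldots,m_{p}$ (with $m_{d}=\{1,\ldots,d\}$), so that the cardinality $|m|$ takes each value in $\{0,1,\ldots,p\}$ exactly once as $m$ ranges over $\cM_{o}$; this \emph{light} part is summed explicitly. By contrast $\cM\setminus\cM_{o}$ may contain exponentially many sets, but the weight there has been engineered precisely so that, after grouping by cardinality, a standard binomial estimate collapses the contribution into a geometric series.

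First I would treat $\cM_{o}$. Since $\Delta(\varnothing)=2\log(1+0)=0$ and $\Delta(m_{d})=2\log(1+d)$ for $1\le d\le p$, one gets
\begin{equation*}
\sum_{m\in\cM_{o}}e^{-\Delta(m)}=1+\sum_{d=1}^{p}\frac{1}{(1+d)^{2}}\le 1+\sum_{k=2}^{\infty}\frac{1}{k^{2}}=\frac{\pi^{2}}{6},
\end{equation*}
using the Basel identity $\sum_{k\ge1}k^{-2}=\pi^{2}/6$.

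Next I would treat $\cM\setminus\cM_{o}$. Since the weight there depends on $m$ only through $|m|$, and since there are at most $\binom{p}{k}$ subsets of $\Omega$ of a given cardinality $k$, grouping by cardinality gives
\begin{equation*}
\sum_{m\in\cM\setminus\cM_{o}}e^{-\Delta(m)}\le\sum_{k=1}^{p}\binom{p}{k}\,e^{-k\log(2ep/k)}=\sum_{k=1}^{p}\binom{p}{k}\pa{\frac{k}{2ep}}^{k}.
\end{equation*}
The key estimate is the elementary bound $\binom{p}{k}\le(ep/k)^{k}$ (which follows from $k!\ge(k/e)^{k}$); substituting it makes the factor $(ep/k)^{k}$ cancel against $(k/(2ep))^{k}$, leaving exactly $2^{-k}$. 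Summing the resulting geometric series,
\begin{equation*}
\sum_{m\in\cM\setminus\cM_{o}}e^{-\Delta(m)}\le\sum_{k=1}^{p}2^{-k}\le 1.
\end{equation*}

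Adding the two bounds yields $\Sigma\le\pi^{2}/6+1$, as claimed. There is no genuine obstacle here: the argument is pure bookkeeping once the sum is split, and the only nontrivial ingredient is the binomial inequality $\binom{p}{k}\le(ep/k)^{k}$. The point one must verify is simply that the splitting is consistent --- that each cardinality $k\in\{1,\ldots,p\}$ is represented by exactly one element of $\cM_{o}$, so that replacing $\binom{p}{k}-1$ by $\binom{p}{k}$ in the second sum is a valid (and harmless) overcount --- which is guaranteed by the nested structure of the sets $m_{d}$.
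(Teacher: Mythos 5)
Your proof is correct and follows essentially the same route as the paper: split $\Sigma$ over $\cM_{o}$ and $\cM\setminus\cM_{o}$, bound the first part by $\sum_{k\ge1}k^{-2}=\pi^{2}/6$, and collapse the second by grouping subsets by cardinality and applying $\binom{p}{k}\le(ep/k)^{k}$ so that the weight leaves exactly $2^{-k}$, summing to at most $1$. The paper uses the identical decomposition and the same binomial estimate (quoted from Proposition~2.5 of \cite{MR2319879}), including the same harmless overcount of the cardinality classes in the second sum.
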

Moreover, for any $m\in\cM$, $\overline\bsS_{m}$ defined by \eref{models-var} is a $\ab{m}$-dimensional vector space. As an immediate consequence, $\bsG_{m}$ is VC-subgraph on $\sW$ with dimension not larger than $\ab{m}+1$ which satisfies the Assumption~\ref{model-VC} with $V_{m}=\ab{m}+1$. We are now able to apply the model selection 
procedure presented in Section~\ref{Mod-Sel} and obtain the following result.
\begin{cor}\label{complete-sele}
For all $m\in\cM$, let $\overline\sQ_{m}=\left\{R_{\bsg},\;\bsg\in\bsG_{m}\right\}$. Whatever the distribution of $W$, the estimator $R_{\widehat\bsg}$ given by the model selection procedure in Section~\ref{Mod-Sel} over $\left\{\gGamma_{m},\;m\in\cM\right\}$ associated with the weight defined by \eref{var-sele-weight} satisfies 
\begin{equation}\label{vari-risk}
\E\cro{h^{2}(R_{\bsg\et},R_{\widehat \bsg})}\leq1.95\times10^{7}(\sB_{o}\wedge\sB_{c}),
\end{equation}
where $$\sB_{o}=\inf_{m\in\cM_{o}}\left\{h^{2}(R_{\bsg\et},\overline\sQ_{m})+\frac{|m|+1}{n}\cro{1+\log_{+}\left(\frac{n}{|m|+1}\right)}\right\}$$ and $$\sB_{c}=\inf_{m\in\cM}\left\{h^{2}(R_{\bsg\et},\overline\sQ_{m})+\frac{|m|+1}{n}\cro{1+\log\cro{\frac{(2p)\vee n}{|m|+1}}}\right\}.$$
\end{cor}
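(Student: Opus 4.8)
The plan is to read the bound off the i.i.d.\ risk inequality \eref{iid-bound}, which is available here because the $W_i$ are i.i.d.\ and $R_i\et=R_{\bsg\et}$. First I would record the two inputs: by Lemma~\ref{complete-weight} the weights \eref{var-sele-weight} give $\Sigma\le1+\pi^2/6$, and $\bsG_m$ is VC-subgraph with $V_m=\ab{m}+1$, so Assumption~\ref{model-VC} holds. Since $\Q$ is dense in $\R$, each $\gGamma_m$ is dense in $\bsG_m$ for pointwise convergence, and the Hellinger continuity of $\gamma\mapsto R_\gamma$ then upgrades this to $h^2(R_{\bsg\et},\sQ_m)=h^2(R_{\bsg\et},\overline\sQ_m)$ for all $m$ (the density remark following Theorem~\ref{thm-1}). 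Thus \eref{iid-bound} becomes
\[
\E\cro{h^2(R_{\bsg\et},R_{\widehat\bsg})}\le c_2(c_3+\Sigma)\inf_{m\in\cM}\cro{h^2(R_{\bsg\et},\overline\sQ_m)+\frac{\Delta(m)}{n}+\frac{V_m}{n}L_n(m)},
\]
with $L_n(m)=1+\log_+(n/V_m)$, and the whole task reduces to comparing the complexity $\Delta(m)/n+V_mL_n(m)/n$ with the two target complexities defining $\sB_o$ and $\sB_c$.

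To reach $\sB_o$ I would restrict the infimum to $m\in\cM_o$, where $\Delta(m)=2\log(1+\ab{m})$. Using the elementary bound $\log x<x/2$ for $x>0$ gives $2\log(1+\ab{m})<\ab{m}+1=V_m\le V_mL_n(m)$, hence $\Delta(m)/n\le V_mL_n(m)/n$ and
\[
\frac{\Delta(m)}{n}+\frac{V_m}{n}L_n(m)\le 2\,\frac{\ab{m}+1}{n}\cro{1+\log_+\pa{\frac{n}{\ab{m}+1}}}.
\]
Since $h^2\le2h^2$, restricting to $\cM_o$ and taking the infimum yields $\inf_{m\in\cM}\cro{\cdots}\le2\sB_o$.

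To reach $\sB_c$ I would prove the comparison $\Delta(m)/n+V_mL_n(m)/n\le2T^c_m$ for \emph{every} $m\in\cM$, where $T^c_m=\frac{\ab{m}+1}{n}\cro{1+\log\pa{\frac{(2p)\vee n}{\ab{m}+1}}}$. The bound $V_mL_n(m)/n\le T^c_m$ is immediate from $n\le(2p)\vee n$ together with $\ab{m}+1\le p+1\le2p$, which make $\log_+(n/V_m)\le\log((2p\vee n)/(\ab{m}+1))$. The remaining inequality $\Delta(m)\le nT^c_m$ is the heart of the matter: for $m\in\cM_o$ it again follows from $2\log(1+\ab{m})<\ab{m}+1$, while for $m\in\cM\setminus\cM_o$, where $1\le\ab{m}\le p-1$ and $\Delta(m)=\ab{m}\log(2ep/\ab{m})$, I would set $\phi(x)=x\log(2ep/x)$ and observe $\phi'(x)=\log(2p/x)>0$ on $(0,2p)$. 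As $\ab{m}+1\le p<2p$, $\phi$ increases on $[\ab{m},\ab{m}+1]$, so
\[
\Delta(m)=\phi(\ab{m})\le\phi(\ab{m}+1)=(\ab{m}+1)\cro{1+\log\pa{\frac{2p}{\ab{m}+1}}}\le nT^c_m.
\]
This gives $\Delta(m)/n+V_mL_n(m)/n\le2T^c_m$ for all $m$, and taking the infimum (again using $h^2\le2h^2$) yields $\inf_{m\in\cM}\cro{\cdots}\le2\sB_c$.

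Combining the two estimates gives $\inf_{m\in\cM}\cro{\cdots}\le2(\sB_o\wedge\sB_c)$, so that $\E\cro{h^2(R_{\bsg\et},R_{\widehat\bsg})}\le2c_2(c_3+\Sigma)(\sB_o\wedge\sB_c)$; substituting $c_2=5013.2$, $c_3=1939.8$ and $\Sigma\le1+\pi^2/6$ gives $2c_2(c_3+\Sigma)\le1.95\times10^7$, the asserted constant. I expect the only genuinely delicate step to be the monotonicity estimate for $\phi$ on $\cM\setminus\cM_o$, since that is precisely where the combinatorial weight $\ab{m}\log(2ep/\ab{m})$ must be absorbed into the $\log((2p\vee n)/(\ab{m}+1))$ rate; the rest is bookkeeping with \eref{iid-bound}.
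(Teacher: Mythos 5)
Your proposal is correct and follows essentially the same route as the paper: both start from \eref{iid-bound} with $V_m=|m|+1$, $\Sigma\le1+\pi^2/6$ and the density of $\gGamma_m$ in $\bsG_m$, then absorb the weights into the complexity term via $2\log(1+|m|)\le|m|+1$ on $\cM_o$ and the monotonicity of $x\mapsto x\log(2ep/x)$ on $(0,2p]$ on $\cM\setminus\cM_o$, paying a factor $2$ to reach $2c_2(c_3+\Sigma)\le1.95\times10^{7}$. The only cosmetic difference is that you verify the comparison with $\sB_c$ uniformly over all of $\cM$, whereas the paper treats $\cM_o$ and $\cM\setminus\cM_o$ separately and then observes $\log_+(n/(|m|+1))\le\log((2p\vee n)/(|m|+1))$ for $m\in\cM_o$; the content is identical.
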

The proof of Corollary~\ref{complete-sele} is postponed to Appendix A. Let us remark a little bit here for the strategy of assigning weights which is different with the typical choice, where for each $m\in\cM$,
\begin{equation*}
\Delta(m)=\left\{
\begin{aligned}
&|m|\log\left(\frac{2ep}{|m|}\right) &,&\mbox{\quad $m\neq\varnothing$,}\\
&0 &,&\mbox{\quad $m=\varnothing$.}
\end{aligned}
\right.
\end{equation*}
With the typical choice of the associated weights, one can derive a risk bound $\E\cro{h^{2}(R_{\bsg\et},R_{\widehat \bsg})}\leq 1.95\times10^{7}\sB_{c}$. Comparing this result with the one given in \eref{vari-risk}, we note that \eref{vari-risk} improves it by a $\log(p)$ term whenever the minimizer $m\et\in\cM$ in the right hand side of \eref{vari-risk} does belong to $\cM_{o}$.

\appendix
\section{Proofs of the main theorem and its corollaries}
\subsection*{\bf{A.1 Proof of Theorem~\ref{thm-1}}}\label{proof}
Before starting to prove the main theorem, let us introduce some notations and facts for later use. For all $i\in\{1,\ldots,n\}$, let $P_{i}\et$ be the true distribution of $X_{i}=(W_{i},Y_{i})$ and $\gP\et=\otimes_{i=1}^{n}P_{i}\et$ be the true joint distribution of the observed data $\bsX=(X_{1},\ldots,X_{n})$. We denote $\gP_{\bsg}=\otimes_{i=1}^{n}P_{i,\bsg}$ as the distribution of independent random variables $(W_{1},Y_{1}),\ldots,(W_{n},Y_{n})$ for which the conditional distribution of $Y_{i}$ given $W_{i}$ is given by $R_{\bsg(W_{i})}\in\widetilde\sQ$ for each $i$. With the equalities $P_{i}\et=R_{i}\et\cdot P_{W_{i}}$, $P_{i,\bsg}=R_{\bsg}\cdot P_{W_{i}}$, we have 
\[
h^{2}(P_{i}\et, P_{i,\bsg})=\int_{\sW}h^{2}\pa{R_{i}\et(w),R_{\bsg(w)}}dP_{W_{i}}(w).
\]
If we define the pseudo Hellinger distance $\gh$ between two probabilities $\gP=\otimes_{i=1}^{n}P_{i}$ and $\gP'=\otimes_{i=1}^{n}P'_{i}$ by $$\gh^{2}(\gP, \gP')=\sum_{i=1}^{n}h^{2}\pa{P_{i},P'_{i}},$$
for any $\bsg\in\gGamma$, we have 
\begin{align}
\gh^{2}(\gR\et,\gR_{\bsg})&=\sum_{i=1}^{n}\int_{\sW}h^{2}\pa{R_{i}\et(w),R_{\bsg(w)}}dP_{W_{i}}(w)\nonumber \\
&=\sum_{i=1}^{n}h^{2}\pa{P_{i}\et,P_{i,\bsg}}=\gh^{2}(\gP\et, \gP_{\bsg}).\label{h-h}
\end{align}

We set ${\bm{\tau}}=\bigotimes_{i=1}^{n}\tau_{i}$ with $\tau_{i}=P_{W_{i}}\otimes \nu$ for all $i\in\{1,\ldots,n\}$. For all $m\in\cM$, we denote by $\cbR_{m}$ the following families of densities (with respect to ${\bm{\tau}}$) on $\sX^{n}=(\sW\times \sY)^{n}$  
\[
\cbR_{m}=\{\gr_{\bsg}:\gx=(x_{1},\ldots,x_{n})\mapsto r_{\bsg(w_{1})}(y_{1})\ldots r_{\bsg(w_{n})}(y_{n}),\; \bsg\in\gGamma_{m}\}
\]
and by $\sbP_{m}$ the corresponding $\rho$-model, i.e. the finite or countable set of probabilities $\left\{\gP=\gr_{\bsg}\cdot{\bm{\tau}},\;\bsg\in\gGamma_{m}\right\}$ with the representation $({\bm{\tau}},\cbR_{m})$.
\begin{prop}\label{general-vc}
Under Assumption~\ref{model-VC}, for any $m\in\cM$, the class of functions $\cR_{m}=\{r_{\bsg}:\;(w,y)\mapsto r_{\bsg(w)}(y),\;\bsg\in\bsG_{m}\}$ on $\sX=\sW\times\sY$ is VC-subgragh with dimension not larger than $9.41V_{m}$.
\end{prop}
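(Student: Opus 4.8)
The plan is to reduce the VC-subgraph dimension of $\cR_m$ to that of the model $\bsG_m$ using the one structural feature of exponential families that makes the reduction go through: the fibered unimodality of $\gamma\mapsto r_\gamma(y)$. I would work throughout with the open subgraphs $\sS_{r_{\bsg}}=\{(w,y,t)\in\sX\times\R:\,t<r_{\bsg(w)}(y)\}$ and estimate how many distinct traces they cut out on a finite set of $N$ points $(w_k,y_k,t_k)$, $1\le k\le N$. Since every density $r_\gamma$ is strictly positive, each point with $t_k\le 0$ always lies in every subgraph and contributes nothing to the count, so I may restrict to $t_k>0$ and take logarithms.

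The key step is the following fiber fact: for each fixed $(y,t)$ with $t>0$, the set $\{\gamma\in I:\,r_\gamma(y)>t\}$ is an interval of $I$. I would prove this by reparametrizing with the natural parameter $\theta=u(\gamma)$, which is legitimate because $u$ is continuous and strictly monotone; in this parametrization $r=\exp(\theta T(y)-A(\theta))$ with $A(\theta)=\log\int_{\sY}e^{\theta T}\,d\nu$ the (convex) cumulant generating function. Hence $\theta\mapsto\theta T(y)-A(\theta)$ is concave, its superlevel set $\{\theta:\,\theta T(y)-A(\theta)>\log t\}$ is an interval, and pulling it back through the monotone $u$ keeps it an interval in $\gamma$. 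Equivalently, $\partial_\gamma\log r_\gamma(y)=u'(\gamma)\,(T(y)-\E_{R_\gamma}[T])$ has at most one sign change, so $\gamma\mapsto r_\gamma(y)$ is unimodal.

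Given this, the membership condition $t_k<r_{\bsg(w_k)}(y_k)$ becomes $\bsg(w_k)\in(\ell_k,u_k)$ for endpoints $\ell_k=\ell(y_k,t_k)$, $u_k=u(y_k,t_k)$ (possibly infinite), i.e. the conjunction $\bsg(w_k)>\ell_k$ and $\bsg(w_k)<u_k$. The first condition is exactly $(w_k,\ell_k)\in\sS_{\bsg}$ and the second is $(w_k,u_k)\notin\sS_{\bsg}$, where $\sS_{\bsg}$ is the subgraph of $\bsg$ in $\sW\times\R$. Thus each $\sS_{r_{\bsg}}$ is an intersection $A_{\bsg}\cap B_{\bsg}$, where $A_{\bsg}$ comes from the class $\{(w,y,t):\,\bsg(w)>\ell(y,t)\}$ and $B_{\bsg}$ from $\{(w,y,t):\,\bsg(w)<u(y,t)\}$; the trace of each of these two classes on the $N$ points is governed by the trace of the subgraph class of $\bsG_m$ on the $N$ auxiliary points $(w_k,\ell_k)$ (resp. $(w_k,u_k)$). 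By Assumption~\ref{model-VC} and Sauer's lemma, each such trace count is at most $(eN/V_m)^{V_m}$, and bounding the coupled intersection class by the product of the two growth functions yields at most $(eN/V_m)^{2V_m}$ distinct traces of $\{\sS_{r_{\bsg}}\}$.

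Finally, if $\cR_m$ shatters $N$ points then $2^N\le(eN/V_m)^{2V_m}$; writing $N=cV_m$ and simplifying gives $c\log 2\le 2(1+\log c)$, whose largest root lies below $9.41$, so no set of $\lceil 9.41\,V_m\rceil$ points can be shattered. I expect the genuine obstacle to be the fiber fact and its bookkeeping: one must guarantee that the superlevel set is a single interval for \emph{every} $(y,t)$ and handle the degenerate, half-infinite and empty cases (where membership becomes constant or a single threshold) together with the boundary behaviour coming from the range $u(I)$ of the natural parameter, since this is precisely where the exponential-family structure enters and what licenses the two-threshold, intersection-of-VC-classes description. Once that is secured, the remaining combinatorics is routine Sauer--Shelah estimation.
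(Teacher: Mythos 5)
Your argument is correct and is in substance the same as the paper's: the paper simply delegates the two ingredients you prove by hand, namely the monotone reparametrization $\theta=u(\gamma)$ (Proposition~42-(ii) of \cite{MR3595933}) and the log-concavity/two-threshold/Sauer computation that produces the constant $9.41$ (Proposition~5 of \cite{Baraud2020}), so you have essentially reconstructed the proof of the cited result rather than found a new route. One small slip to fix: encoding $\bsg(w_k)<u_k$ as $(w_k,u_k)\notin\sS_{\bsg}$ yields the non-strict inequality $\bsg(w_k)\le u_k$, so you should either observe that $\{k:\bsg(w_k)<u_k\}$ is the trace of the open subgraph class of $-\bsG_m$ on the points $(w_k,-u_k)$ (and that $-\bsG_m$ has the same VC-subgraph dimension), or invoke the standard fact that open and closed subgraph classes have the same VC index; with that patch, the bound $2^{N}\le (eN/V_m)^{2V_m}$ and the root of $c\log 2=2(1+\log c)$ (approximately $9.33<9.41$) give exactly the stated constant.
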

\begin{proof}
For any $m\in\cM$, reparametrizing the exponential family in its natural form, we obtain
\[
\cR_{m}=\{q_{\bst}:\;(w,y)\mapsto e^{T(y)\bst(w)-A(\bst(w))},\;\bst\in\bsT_{m}\},
\]
where $A(\theta)=\log\cro{\int_{\sY}\exp(\theta T(y))d\nu(y)}$ and $\bsT_{m}=\{\bst=u\circ\bsg,\;\bsg\in\bsG_{m}\}$. By Proposition~42-(ii) of \cite{MR3595933}, VC-subgraph is preserved by composition with a monotone function. Therefore, under Assumption~\ref{model-VC}, $\bsT_{m}$ is also VC-subgraph on $\sW$ with dimension not larger than $V_{m}\geq1$. Applying Proposition~5 of \cite{Baraud2020} with $\cP=\cR_{m}$ for each $m\in\cM$, we can conclude.
\end{proof}
Let us remark that the function $\psi$ defined by \eref{def-psi} in the present paper satisfies the Assumption~2 in \cite{BarBir2018} with $a_{0}=4$, $a_{1}=3/8$, $a_{2}^{2}=3\sqrt{2}$ and for any $\rho$-model $\sbP_{m}$, we follow the definition of $\rho$-dimension function $D^{\sbP_{m}}$ of $\sbP_{m}$ given by (15) in \cite{BarBir2018}. The next result provides an upper bound for $D^{\sbP_{m}}$.
\begin{prop}\label{general-rho}
Under Assumption~\ref{model-VC}, for any $m\in\cM$, for all product probabilities $\gP\et$ and $\overline\gP=\otimes_{i=1}^{n}\overline P_{i}$ on $(\sX^{n},\cX^{n})$ with $\overline P_{i}=\overline p\cdot\tau_{i}$ for all $i\in\{1,\ldots,n\}$,
\[
D^{\sbP_{m}}(\gP\et,\overline\gP)\leq 10^{3}V_{m}\cro{9.11+\log_{+}\left(\frac{n}{V_{m}}\right)}.
\]
\end{prop}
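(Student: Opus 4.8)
The plan is to feed the VC bound of Proposition~\ref{general-vc} into the general machinery that controls the $\rho$-dimension of a VC-subgraph model. Following (15) of \cite{BarBir2018}, $D^{\sbP_{m}}(\gP\et,\overline\gP)$ is, up to the normalisation used there, governed by an expected supremum of the $\psi$-transformed empirical process indexed by $\gGamma_{m}$, built from the centred variables $\psi(\sqrt{r_{\bsg(W_{i})}(Y_{i})/\overline p(W_{i},Y_{i})})$. First I would record that every operation entering this process preserves the VC-subgraph property with no increase of dimension: starting from $\cR_{m}=\{(w,y)\mapsto r_{\bsg(w)}(y)\}$, which is VC-subgraph with dimension at most $9.41V_{m}$ by Proposition~\ref{general-vc}, dividing by the fixed positive weight $\overline p$ (a fibrewise increasing transformation), taking the square root, and composing with the bounded increasing map $\psi$ all leave us with a class that is VC-subgraph on $\sX$ with dimension still at most $9.41V_{m}$ (by Proposition~42 of \cite{MR3595933} and the stability properties recalled in Section~\ref{sect-3}); moreover this class is uniformly bounded since $|\psi|\le1$.

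Next I would invoke the quantitative upper bound on $D^{\sbP}$ for VC-subgraph models developed in Section~8 of \cite{BarBir2018} together with \cite{MR3595933}: for a model whose associated density class is VC-subgraph with dimension $\overline V$, one has $D^{\sbP}(\gP\et,\overline\gP)\le c\,\overline V[\,\overline a+\log_{+}(n/\overline V)]$ for explicit numerical constants $c,\overline a$. This estimate comes from the polynomial metric-entropy bound for bounded VC-subgraph classes, combined with a maximal/concentration inequality for the empirical process and the usual fixed-point (peeling) argument matching the fluctuation of the process to the localisation radius; the variance normalisation built into (15) is precisely what converts the $\sqrt{\overline V\log(n/\overline V)}$ fluctuation into a term that is linear in $\overline V$.

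The point that requires genuine care, and what I expect to be the main obstacle, is that the computation in Section~8 of \cite{BarBir2018} is carried out under the i.i.d.\ assumption, whereas here $\bm{\tau}=\bigotimes_{i=1}^{n}\tau_{i}$ with $\tau_{i}=P_{W_{i}}\otimes\nu$ has marginals that vary with $i$. The VC-subgraph dimension of $\cR_{m}$ is a purely combinatorial quantity, hence insensitive to the laws $P_{W_{i}}$, so the entropy bound is uniform in $i$; the work is to re-run the maximal inequality for independent but not identically distributed summands. I would handle this by applying the concentration bound coordinatewise and summing the common entropy integral, so that the heterogeneous product yields the same order as the i.i.d.\ case.

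Finally I would substitute $\overline V=9.41V_{m}$ into $c\,\overline V[\,\overline a+\log_{+}(n/\overline V)]$. Using $\log_{+}(n/(9.41V_{m}))\le\log_{+}(n/V_{m})$ and absorbing the factor $9.41$, the term $\log 9.41$ and the base constant $\overline a$ into the displayed constants, this gives $D^{\sbP_{m}}(\gP\et,\overline\gP)\le10^{3}V_{m}[9.11+\log_{+}(n/V_{m})]$, the claimed inequality. The only quantitative effort beyond the adaptation above is this constant bookkeeping, which I would carry out explicitly to certify the numbers $10^{3}$ and $9.11$.
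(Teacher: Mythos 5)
Your proposal is correct and follows essentially the same route as the paper: the paper's proof simply invokes Proposition~6 of \cite{Baraud2020} with the localized class $\sF_{y}=\ac{\psi\pa{\sqrt{r_{\bsg}/\overline p}}:\bsg\in\gGamma_{m},\ \gh^{2}(\gP\et,\gr_{\bsg}\cdot{\bm{\tau}})+\gh^{2}(\gP\et,\overline\gP)<y^{2}}$ and feeds in the $9.41V_{m}$ bound from Proposition~\ref{general-vc}, which is exactly the combination you describe (VC stability under the fibrewise monotone map $x\mapsto\psi(\sqrt{x/\overline p})$, entropy bound, maximal inequality with peeling for independent non-identically distributed summands, then constant bookkeeping). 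The non-i.i.d.\ adaptation you flag as the main obstacle is already handled in the cited Proposition~6 of \cite{Baraud2020}, so it does not require new work here.
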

\begin{proof}
The proof is basically similar to the proof of Proposition~6 in \cite{Baraud2020} except a modification of the class $\sF_{y}$. More precisely, for any $y>0$, we define $$\sF_{y}=\ac{\left.\psi\pa{\sqrt{\frac{r_{\bsg}}{\overline p}}}\right|\;\bsg\in\gGamma_{m},\;\gh^{2}(\gP\et,\gr_{\bsg}\cdot{\bm{\tau}})+\gh^{2}(\gP\et,\overline\gP)< y^{2}}.$$ Then combining Proposition~\ref{general-vc}, the conclusion is easy to obtain by following the proof of Proposition~6 in \cite{Baraud2020}.
\end{proof}
Now we turn to prove Theorem~\ref{thm-1}. It follows by Proposition~\ref{general-rho} taking $\overline\gP=\gP_{\bsg}$ that for any $\bsg\in\gGamma$,
\[
D^{\sbP_{m}}(\gP\et,\gP_{\bsg})\leq 10^{3}V_{m}\cro{9.11+\log_{+}\left(\frac{n}{V_{m}}\right)}=D_{n}(m),
\]
which satisfies (22) of \cite{BarBir2018} with $K=0$.
Applying Theorem~2 of \cite{BarBir2018} over the collection of $\rho$-models $\{\sbP_{m},\;m\in\cM\}$ with $\kappa_{1}=0$, we obtain for any arbitrary $\gP\et$, the $\rho$-estimator $\gP_{\widehat\bsg}$ satisfies, for all $\xi>0$ with a probability at least $1-\Sigma e^{-\xi}$,
\begin{equation}\label{eq-thm1}
\gh^{2}(\gP\et,\gP_{\widehat \bsg})\leq \inf_{m\in\cM}\cro{c_{1}\gh^{2}(\gP\et,\sbP_{m})+c_{2}\left(\Xi(m)+1.49+\xi\right)},
\end{equation}
where $c_{1}=149.8$ and $c_{2}=5013.2$. The constant $\Sigma$ in front of $e^{-\xi}$ just due to in Theorem~2 of \cite{BarBir2018} they assumed $\Sigma\leq1$ for the sake of simplicity. One can refer to their proof of Theorem~2 for understanding the role $\Sigma$ plays.
The conclusion finally follows from the equalities $\gh^{2}(\gP\et,\gP_{\widehat\bsg})=\gh^{2}(\gR\et,\gR_{\widehat\bsg})$ and $\gh^{2}(\gP\et,\sbP_{m})=\gh^{2}(\gR\et,\sbQ_{m})$, for all $m\in\cM$.
\subsubsection*{\bf{A.2 Proof of Corollary~\ref{besov-approximation}}}\label{besov-proof}
We first present the following approximation result which is an immediate consequence combining Theorem~1 and Proposition~2 of \cite{akakpo2012adaptation}. 

\begin{prop}\label{appro-aniso}
Let $r\in\N$, $R\in\R_{+}^{*}$, ${\bm{\alpha}}=(\alpha_{1},\ldots,\alpha_{d})\in\prod_{j=1}^{d}(0,r+1)$, $p>0$ and $1\leq p'\leq\infty$ such that 
\[
\frac{\overline\alpha}{d}>\left(\frac{1}{p}-\frac{1}{p'}\right)_{+}.
\]
For all $f\in B_{p,q}^{{\bm{\alpha}}}\left(\cro{0,1}^{d},R\right)$ and all $l\in\N$, there exists a partition $\pi(l)\in\cup_{{\bs{s}}\in\N^{d}}M_{\bs{s}}^{\cB,d}$ of $\cro{0,1}^{d}$ containing only hyperrectangles such that
\[
|\pi(l)|\leq C_{d,{\bm{\alpha}},p}2^{ld}
\]
and 
\begin{equation}
\inf_{\widetilde f\in\overline\bsS^{\cB,d}_{(\pi(l),r)}}\|f-\widetilde f\|_{p'}\leq C_{d,r,{\bm{\alpha}},p,p'}R2^{-l\overline\alpha},
\end{equation}
where $q=\infty$ if $0<p\leq1$ or $p\geq2$ and $q=p$ if $1<p<2$, $C_{d,r, {\bm{\alpha}},p,p'}$ is a constant depending only on $d,r, {\bm{\alpha}},p,p'$.
\end{prop}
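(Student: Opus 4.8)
The plan is to obtain the statement directly from the anisotropic approximation theory of \cite{akakpo2012adaptation}, as the author signals, proceeding in two steps. First I would invoke Theorem~1 of \cite{akakpo2012adaptation}, which provides, for any $f$ on $\cro{0,1}^{d}$ and any resolution level $l\in\N$, an adaptively chosen partition $\pi(l)$ assembled from dyadic hyperrectangles of the form \eref{besov-hyper}, with cardinality controlled by $|\pi(l)|\le C_{d,{\bm{\alpha}},p}\,2^{ld}$, and on which the best degree-$r$ piecewise-polynomial approximation error in $\L_{p'}$ is bounded by a constant multiple of $2^{-l\overline\alpha}$ times an anisotropic approximation functional of $f$. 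The harmonic mean $\overline\alpha$ appears through the balancing of the per-direction resolutions: equalizing the directional errors forces $2^{s_{j}}$ to be of order $2^{l\overline\alpha/\alpha_{j}}$, whence $\prod_{j}2^{s_{j}}\asymp 2^{l\sum_{j}\overline\alpha/\alpha_{j}}=2^{ld}$ while each directional error is of order $2^{-l\overline\alpha}$; Akakpo's selection realizes this trade-off even in the spatially inhomogeneous regime, which is why a genuinely adaptive (rather than uniform) partition is needed.

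Second, I would apply Proposition~2 of \cite{akakpo2012adaptation}, which identifies the approximation functional above with a constant multiple of the anisotropic Besov seminorm $|f|_{{\bm{\alpha}},p,q}$, under the degree window ${\bm{\alpha}}\in\prod_{j}(0,r+1)$ and the embedding condition $\overline\alpha/d>(1/p-1/p')_{+}$. Since $f\in B_{p,q}^{{\bm{\alpha}}}(\cro{0,1}^{d},R)$ means $|f|_{{\bm{\alpha}},p,q}<R$, substituting this bound into the estimate from the first step gives $\inf_{\widetilde f\in\overline\bsS^{\cB,d}_{(\pi(l),r)}}\|f-\widetilde f\|_{p'}\le C_{d,r,{\bm{\alpha}},p,p'}\,R\,2^{-l\overline\alpha}$, which is exactly the asserted bound, with the size constraint on $|\pi(l)|$ carried over unchanged.

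The points requiring the most care are two. One is checking that the partition delivered by Theorem~1 of \cite{akakpo2012adaptation} genuinely belongs to $\cup_{{\bs{s}}\in\N^{d}}M_{{\bs{s}}}^{\cB,d}$, with hyperrectangles of the precise form \eref{besov-hyper}; this amounts to matching the two dyadic constructions and is where the (routine) bookkeeping lies. The other is confirming that the stated coupling of $q$ to $p$ --- namely $q=\infty$ for $0<p\le1$ or $p\ge2$ and $q=p$ for $1<p<2$ --- is exactly the regime in which the Besov characterization of the approximation functional holds with a finite constant, and that $\overline\alpha/d>(1/p-1/p')_{+}$ is precisely the embedding condition under which the nonlinear $N$-term rate $N^{-\overline\alpha/d}$ is attainable in $\L_{p'}$. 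I do not expect a substantive obstacle at either point, in agreement with the author's remark that the result is an immediate consequence of the two cited statements; the content is entirely borrowed, and the work is transcription into the notation of \eref{besov-hyper} and $\overline\bsS^{\cB,d}_{(\pi(l),r)}$.
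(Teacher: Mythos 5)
Your proposal matches the paper exactly: the paper offers no independent argument for Proposition~\ref{appro-aniso}, stating only that it is an immediate consequence of combining Theorem~1 and Proposition~2 of \cite{akakpo2012adaptation}, which is precisely the two-step citation you describe. Your additional remarks on balancing the directional resolutions via the harmonic mean and on verifying the $q$--$p$ coupling are sensible elaborations of the same route, not a departure from it.
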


Now we turn to prove Corollary~\ref{besov-approximation}. Under Assumption~\ref{model-parametrize}, applying \eref{iid-bound}, Lemma~\ref{besov-uniformly} and \eref{anisotropic-weight}, we derive no matter what the distribution of $W$ is, for all $R\in\R_{+}^{*}$, $p>0$ and ${\bm{\alpha}}\in(\R_{+}^{*})^{d}$ such that $\overline\alpha/d>1/p$, any $\bsg\et\in B_{p,q}^{{\bm{\alpha}}}\left(R,v_{-},v_{+}\right)$
\begin{align}
&\E\cro{h^{2}(R_{\bsg\et},R_{\widehat \bsg})}\nonumber\\
\leq&c_{2}\left(c_{3}+\frac{e}{e-1}\right)\inf_{({\bs{s}},r)\in\cM}\cro{h^{2}(R_{\bsg\et},\overline\sQ_{({\bs{s}},r)}^{d})+\frac{\Delta({\bs{s}},r)}{n}+\frac{V_{({\bs{s}},r)}}{n}(1+\log n)}\nonumber\\
\leq&C_{\kappa}\inf_{({\bs{s}},r)\in\cM}\cro{\inf_{\overline{\bsg}\in\bsG_{({\bs{s}},r)}^{\cB,d}}\left\|\bsg\et-\overline{\bsg}\right\|_{2,P_{W}}^{2}+\frac{\Delta({\bs{s}},r)}{n}+\frac{V_{({\bs{s}},r)}}{n}(1+\log n)}, \label{anisobesov-bound}
\end{align}
where $\overline\sQ_{({\bs{s}},r)}^{d}=\left\{R_{\bsg},\;\bsg\in\bsG_{({\bs{s}},r)}^{\cB,d}\right\}$ and $C_{\kappa}$ is a constant depending on $\kappa$ only. We then apply Proposition~\ref{appro-aniso} by taking $r=\Big\lfloor\sup_{j=1,\ldots,d}\alpha_{j}\Big\rfloor\in\N$, $p'=\infty$ and obtain that for all $l\in\N$, there exists a partition $\pi(l)\in M^{\cB,d}$ such that 
\begin{equation}\label{aniso-w}
\Delta(\pi(l),r)=\log(8d)|\pi(l)|+r\leq C_{d,{\bm{\alpha}},p}2^{ld},
\end{equation}
\begin{equation}\label{aniso-v}
V_{(\pi(l),r)}=(r+1)^{d}|\pi(l)|+1\leq C_{d,{\bm{\alpha}},p}2^{ld},
\end{equation}
\begin{align}
\inf_{\overline{\bsg}\in\bsG^{\cB,d}_{(\pi(l),r)}}\left\|\bsg\et-\overline{\bsg}\right\|_{2,P_{W}}^{2}&=\inf_{\overline{\bsg}\in\bsG^{\cB,d}_{(\pi(l),r)}}\int_{\sW}|\bsg\et(w)-\overline{\bsg}(w)|^{2}dP_{W}(w)\nonumber\\
&\leq\inf_{\overline{\bsg}\in\bsG^{\cB,d}_{(\pi(l),r)}}\left\|\bsg\et-\overline{\bsg}\right\|_{\infty}^{2}\nonumber\\
&\leq\inf_{\overline{\bsg}\in\overline\bsS^{\cB,d}_{(\pi(l),r)}}\left\|\bsg\et-\overline{\bsg}\right\|_{\infty}^{2}\nonumber\\
&\leq C_{d,{\bm{\alpha}},p}R^{2}2^{-2l\overline\alpha}.\label{aniso-app}
\end{align}
Plugging \eref{aniso-w}, \eref{aniso-v} and \eref{aniso-app} into \eref{anisobesov-bound}, we derive
\begin{align}
&\E\cro{h^{2}(R_{\bsg\et},R_{\widehat \bsg})}\nonumber\\
\leq&C_{\kappa}\inf_{l\in\N}\cro{\inf_{\overline{\bsg}\in\bsG^{\cB,d}_{(\pi(l),r)}}\left\|\bsg\et-\overline{\bsg}\right\|_{2,P_{W}}^{2}+\frac{\Delta(\pi(l),r)}{n}+\frac{V_{(\pi(l),r)}}{n}\left(1+\log n\right)}\nonumber\\
\leq&C_{\kappa,d,{\bm{\alpha}},p}\inf_{l\in\N}\left(R^{2}2^{-2l\overline\alpha}+\frac{2^{ld}}{n}\right)(1+\log n),\label{k-bound}
\end{align}
where $C_{\kappa,d,{\bm{\alpha}},p}$ is a constant depending on $\kappa,d,{\bm{\alpha}},p$ only.
To conclude, we need to minimize the right hand side of \eref{k-bound}. 
If $nR^{2}<1$, we take $l=0$ so that
\begin{equation}\label{k-1}
R^{2}2^{-2l\overline\alpha}+\frac{2^{ld}}{n}=R^{2}+\frac{1}{n}<\frac{2}{n}.
\end{equation}
Otherwise, we take $l$ as the largest natural number such that $2^{ld}/n\leq R^{2}2^{-2l\overline\alpha}$ which is well defined since $nR^{2}\geq1$. With this choice of $l$,
\begin{equation}\label{k-2}
R^{2}2^{-2l\overline\alpha}+\frac{2^{ld}}{n}\leq2R^{2}2^{-2l\overline\alpha}\leq C_{{\bm{\alpha}}}R^{\frac{2d}{d+2\overline\alpha}}n^{-\frac{2\overline\alpha}{d+2\overline\alpha}},
\end{equation}
where $C_{{\bm{\alpha}}}$ is a constant depending only on ${\bm{\alpha}}$. Combining \eref{k-bound}, \eref{k-1} and \eref{k-2}, we obtain
\begin{align*}
\E\cro{h^{2}(R_{\bsg\et},R_{\widehat \bsg})}
&\leq C_{\kappa,d,{\bm{\alpha}},p}\left(R^{\frac{2d}{d+2\overline\alpha}}n^{-\frac{2\overline\alpha}{d+2\overline\alpha}}+\frac{1}{n}\right)\left(1+\log n\right).
\end{align*}
We conclude by taking the supremum over the set $B_{p,q}^{{\bm{\alpha}}}(R,v_{-},v_{+})$.
\subsubsection*{\bf{A.3 Proof of Corollary~\ref{additive-model-risk}}}
\begin{lem}\label{concave-subadditive}
For any $k\in\N^{*}$, $x_{1},\ldots,x_{k}\geq0$ and $\alpha\in(0,1]$, $\left(\sum_{i=1}^{k}x_{i}\right)^{\alpha}\leq \sum_{i=1}^{k}x_{i}^{\alpha}$.
\end{lem}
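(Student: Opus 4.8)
The plan is to reduce the claim to the elementary one-variable inequality $t^{\alpha}\ge t$, valid for every $t\in\cro{0,1}$ and every $\alpha\in(0,1]$, and then to exploit homogeneity by normalizing the sum. First I would dispose of the degenerate case: if $\sum_{i=1}^{k}x_{i}=0$, then all the $x_{i}$ vanish and, with the convention $0^{\alpha}=0$ (legitimate since $\alpha>0$), both sides of the asserted inequality equal $0$. So I may assume $S\eqd\sum_{i=1}^{k}x_{i}>0$ from now on.

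The core observation is that the map $t\mapsto t^{\alpha}$ satisfies $t^{\alpha}\ge t$ on $\cro{0,1}$ whenever $\alpha\le1$: indeed, for $t\in(0,1]$ one has $t^{\alpha}=t\cdot t^{\alpha-1}$ with $t^{\alpha-1}\ge1$, because $\alpha-1\le0$ and $t\le1$ (equivalently, $\alpha\log t\ge\log t$ since $\log t\le0$), while the inequality is trivial at $t=0$. I would apply this to each normalized term $t_{i}=x_{i}/S\in\cro{0,1}$, obtaining $(x_{i}/S)^{\alpha}\ge x_{i}/S$ for every $i\in\{1,\ldots,k\}$.

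Summing these $k$ inequalities and using $\sum_{i=1}^{k}x_{i}/S=1$ yields $\sum_{i=1}^{k}(x_{i}/S)^{\alpha}\ge1$, that is $\sum_{i=1}^{k}x_{i}^{\alpha}\ge S^{\alpha}=\left(\sum_{i=1}^{k}x_{i}\right)^{\alpha}$ after multiplying through by $S^{\alpha}$, which is exactly the claim. There is no genuine obstacle here; the only point requiring a moment's care is the normalization, which relies on the homogeneity identity $\left(\sum_{i=1}^{k}x_{i}\right)^{\alpha}=S^{\alpha}\left(\sum_{i=1}^{k}x_{i}/S\right)^{\alpha}$ together with the separate treatment of $S=0$. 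An equivalent route would be induction on $k$, whose inductive heart is the two-term bound $(x+y)^{\alpha}\le x^{\alpha}+y^{\alpha}$; this in turn follows from the same monotonicity fact applied to $x/(x+y)$ and $y/(x+y)$, so the two arguments are really one and the same.
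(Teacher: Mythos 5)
Your proof is correct and follows essentially the same route as the paper's: both normalize by the sum and exploit that $t\mapsto t^{\alpha}$ dominates the identity on $\cro{0,1}$ (equivalently, $(\lambda x)^{\alpha}\geq\lambda x^{\alpha}$ for $\lambda\in\cro{0,1}$, which the paper derives from concavity of $x\mapsto x^{\alpha}$). The only cosmetic differences are that you prove this one-variable fact directly from $t^{\alpha-1}\geq1$ rather than via concavity, and that you treat all $k$ terms at once instead of first reducing to the two-term case $k=2$ as the paper does.
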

\begin{proof}
In fact, it is enough to prove when $k=2$, i.e. $\left(x_{1}+x_{2}\right)^{\alpha}\leq x_{1}^{\alpha}+x_{2}^{\alpha}$. If at least one of $x_{1}$ and $x_{2}$ is equal to zero, then the conclusion is trivial. So we suppose $x_{1},x_{2}>0$. The function $f(x)=x^{\alpha}$ is concave on $(0,+\infty)$ since its second derivative $f''(x)=\alpha(\alpha-1)x^{\alpha-2}$ is always negative for all $x\in(0,+\infty)$. By the definition of the concave function, for any $\lambda\in\cro{0,1}$,
$$(\lambda x)^{\alpha}=\cro{\lambda x+(1-\lambda)0}^{\alpha}\geq\lambda x^{\alpha}.$$
Therefore, for any $x_{1},x_{2}>0$
\begin{align*}
x_{1}^{\alpha}+x_{2}^{\alpha}&=\cro{\frac{x_{1}}{x_{1}+x_{2}}(x_{1}+x_{2})}^{\alpha}+\cro{\frac{x_{2}}{x_{1}+x_{2}}(x_{1}+x_{2})}^{\alpha}\\
&\geq\frac{x_{1}}{x_{1}+x_{2}}(x_{1}+x_{2})^{\alpha}+\frac{x_{2}}{x_{1}+x_{2}}(x_{1}+x_{2})^{\alpha}\\
&=(x_{1}+x_{2})^{\alpha}.
\end{align*}
\end{proof}
We then introduce a result given by Lemma~4 of \cite{Baraud:2011fk} which we will use later in the proof.
\begin{lem}\label{con-inequality}
Let $(A,\cA,\mu)$ be some probability space and $u$ some nondecreasing and nonnegative concave function on $[0,+\infty)$ such that $u(0)=0$. For all $k\in\cro{1,+\infty}$ and $h\in\L_{k}(A,\mu)$,
$$\|u(|h|)\|_{k,\mu}\leq2^{1/k}u(\|h\|_{k,\mu}),$$ with the convention $2^{1/\infty}=1$.
\end{lem}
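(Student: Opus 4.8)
The plan is to dispose of the endpoint $k=\infty$ by monotonicity alone and to reduce the range $k\in[1,\infty)$ to a single integral estimate. First I would set $M=\|h\|_{k,\mu}$. If $M=0$ then $h=0$ $\mu$-a.e., whence $u(|h|)=u(0)=0$ $\mu$-a.e. and both sides vanish, so I may assume $M\in(0,\infty)$ (it is finite because $h\in\L_{k}(A,\mu)$). For $k=\infty$, since $|h|\le M$ holds $\mu$-a.e. and $u$ is nondecreasing, $u(|h|)\le u(M)$ $\mu$-a.e., so $\|u(|h|)\|_{\infty,\mu}\le u(M)=u(\|h\|_{\infty,\mu})$, which is the assertion with the convention $2^{1/\infty}=1$. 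For $k\in[1,\infty)$ it then suffices to prove $\int_{A}u(|h|)^{k}\,d\mu\le 2\,u(M)^{k}$ and take $k$-th roots.

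The key step is a pointwise envelope for $u$ anchored at $M$: I claim $u(x)\le u(M)\max\{1,x/M\}$ for every $x\ge0$. On $[0,M]$ this is immediate from monotonicity, since $u(x)\le u(M)=u(M)\cdot1$. On $[M,\infty)$ I would use that $t\mapsto u(t)/t$ is nonincreasing on $(0,\infty)$, a direct consequence of concavity together with $u(0)=0$, namely the inequality $u(\lambda t)\ge\lambda u(t)$ for $\lambda\in[0,1]$ already recorded inside the proof of Lemma~\ref{concave-subadditive}. Applying it with $\lambda=M/x\le1$ and $t=x$ gives $u(M)\ge (M/x)u(x)$, i.e. $u(x)\le u(M)\,x/M=u(M)\max\{1,x/M\}$ on this range.

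Finally I would substitute $x=|h|$ into the envelope, raise to the $k$-th power, and use $\max\{1,s\}^{k}\le 1+s^{k}$ (valid for $s\ge0$ and $k\ge1$) to obtain the pointwise bound $u(|h|)^{k}\le u(M)^{k}\bigl(1+(|h|/M)^{k}\bigr)$. Integrating and using $\int_{A}|h|^{k}\,d\mu=M^{k}$ yields $\int_{A}u(|h|)^{k}\,d\mu\le u(M)^{k}(1+1)=2\,u(M)^{k}$, and a $k$-th root finishes the proof. I expect the only genuine obstacle to be the superlinear comparison $u(x)\le u(M)\,x/M$ on $[M,\infty)$: this is exactly where concavity (rather than mere monotonicity) is used, and it is what produces the clean constant $2^{1/k}$; everywhere else the argument rests only on monotonicity and the elementary inequality $\max\{1,s\}^{k}\le1+s^{k}$.
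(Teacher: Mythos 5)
Your proof is correct and complete. Note that the paper itself does not prove this statement: it is imported verbatim as Lemma~4 of Baraud and Birg\'e (2014), so there is no internal argument to compare against; your writeup supplies the missing elementary proof. The route you take --- disposing of $k=\infty$ by monotonicity, establishing the envelope $u(x)\leq u(M)\max\{1,x/M\}$ from the inequality $u(\lambda t)\geq\lambda u(t)$ (which is exactly the concavity consequence already recorded in the proof of Lemma~\ref{concave-subadditive}), and then using $\max\{1,s\}^{k}\leq 1+s^{k}$ together with $\mu(A)=1$ to get the factor $2$ --- is the standard one and is precisely where the probability-space hypothesis is used; every step checks out, including the degenerate case $M=0$. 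The only detail you leave implicit is the measurability of $u(|h|)$, which is harmless since $u$ is monotone and hence Borel.
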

Finally, we introduce the following approximation result which is obtained by combining Corollary~3.1 of \cite{MR581486} and \cite{schumaker1981} (13.62\ p.517). It also appeared in the proof of Proposition~5 in \cite{MR1679028} (4.25\ p.347).
\begin{prop}\label{holder-appro}
For a given $k\in\N^{*}$, let $r\in\N$ such that ${\bm{\alpha}}=(\alpha_{1},\ldots,\alpha_{k})\in\prod_{j=1}^{k}(0,r+1)$. For all $f\in\cH^{{\bm{\alpha}}}(\cro{0,1}^{k},L)$ and all ${\bs{t}}=(t_{1},\ldots,t_{k})\in(\N^{*})^{k}$, we have 
\begin{equation}\label{appro-holder}
\inf_{\widetilde f\in\overline\bsS^{\cH,k}_{({\bs{t}},r)}}\|f-\widetilde f\|_{\infty}\leq C_{k,r}L\sum_{j=1}^{k}{t_{j}}^{-\alpha_{j}},
\end{equation}
where $C_{k,r}$ is a constant depending on $k$ and $r$.
\end{prop}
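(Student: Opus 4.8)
The plan is to reduce the multivariate estimate to one--dimensional polynomial approximation by a tensor--product construction, which is exactly the assembly of Corollary~3.1 of \cite{MR581486} with the univariate bound (13.62) of \cite{schumaker1981}. First I would fix, for each direction $j\in\{1,\ldots,k\}$, a univariate linear operator $P_{j}$ acting on the variable $w_{j}$ alone: on each of the $t_{j}$ regular subintervals $I'_{j}(h_{j})$ it returns a polynomial of degree at most $r$ in $w_{j}$ (for instance a local averaged Taylor polynomial), it reproduces univariate polynomials of degree $\le r$, and it is bounded in the supremum norm by a constant $c_{r}\ge1$ depending only on $r$ and uniformly in $t_{j}$. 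Since these operators act on distinct variables they commute, and the composite $P=P_{1}\circ\cdots\circ P_{k}$ maps any $f$ into a function that is, on every hyperrectangle of $M^{\cH,k}_{\bs{t}}$, a polynomial of degree at most $r$ in each variable; hence $Pf\in\overline\bsS^{\cH,k}_{({\bs{t}},r)}$ and it suffices to bound $\|f-Pf\|_{\infty}$.

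Second, I would use the telescoping identity
\[
I-P_{1}\circ\cdots\circ P_{k}=\sum_{j=1}^{k}\pa{P_{1}\circ\cdots\circ P_{j-1}}\circ\pa{I-P_{j}},
\]
so that $\|f-Pf\|_{\infty}\le\sum_{j=1}^{k}c_{r}^{j-1}\|(I-P_{j})f\|_{\infty}\le c_{r}^{k-1}\sum_{j=1}^{k}\|(I-P_{j})f\|_{\infty}$, using the uniform supremum-norm bound on the leading factors $P_{1}\circ\cdots\circ P_{j-1}$. The key point is that $(I-P_{j})f$ involves smoothness in the single direction $j$ only: freezing the remaining coordinates, $f$ becomes a univariate function of $w_{j}$ whose $r_{j}$--th derivative is $\alpha'_{j}$--Hölder with constant at most $L(f)\le L$, by the very definition of $\cH^{\bm{\alpha}}(\cro{0,1}^{k},L)$. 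Because $\alpha_{j}<r+1$ forces $r_{j}\le r$, the operator $P_{j}$ reproduces the relevant Taylor polynomial on each subinterval, and the univariate Jackson estimate of \cite{schumaker1981} yields, on a cell of length $1/t_{j}$,
\[
\|(I-P_{j})f\|_{\infty}\le C_{r}\,L\,t_{j}^{-\alpha_{j}},
\]
uniformly over the frozen coordinates. Summing over $j$ gives $\|f-Pf\|_{\infty}\le C_{k,r}L\sum_{j=1}^{k}t_{j}^{-\alpha_{j}}$, and the infimum over $\overline\bsS^{\cH,k}_{({\bs{t}},r)}$ is no larger, which is \eref{appro-holder}.

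The genuinely delicate steps are the two ingredients fed into the telescoping: (i) constructing univariate operators that simultaneously reproduce degree-$r$ polynomials and are uniformly bounded in the supremum norm independently of the mesh $t_{j}$, so that the accumulated factor $c_{r}^{k-1}$ does not degrade with the mesh; and (ii) the directional Jackson inequality matching the partial smoothness $\alpha_{j}=r_{j}+\alpha'_{j}$ to the cell size $1/t_{j}$. I expect (ii) to be the main obstacle if one insists on a self--contained argument, since it requires care in passing from the anisotropic seminorm to a one--dimensional modulus of smoothness of the right order; both ingredients are, however, classical and are precisely what the cited references supply, so in the write--up I would simply quote (13.62, p.~517) of \cite{schumaker1981} for the univariate estimate and Corollary~3.1 of \cite{MR581486} for the tensorization, noting that the anisotropic structure of $\cH^{\bm{\alpha}}$ guarantees that all the directional seminorms appearing in (ii) are controlled by the single constant $L$.
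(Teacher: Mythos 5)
Your proposal is correct and follows essentially the same route as the paper, which does not give a self-contained argument but simply obtains the bound by combining Corollary~3.1 of \cite{MR581486} (the tensor-product/telescoping assembly of commuting univariate quasi-interpolants) with the univariate Jackson estimate (13.62, p.~517) of \cite{schumaker1981}, as also recorded in (4.25, p.~347) of \cite{MR1679028}. Your more detailed sketch of the telescoping decomposition and the directional reduction is a faithful expansion of exactly what those citations supply, so there is nothing to reconcile.
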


Now we turn to prove Corollary~\ref{additive-model-risk}. 
First, we note that for any function $\bsg\et=\bsg\left(\sum_{j=1}^{d}\bsg'_{j}\right)\in\cF_{\cro{v_{-},v_{+}}}(\alpha,{\bm{\beta}},{\bf{p}},L,{\bf{R}})$ and any $\cro{f\cro{\left(g\vee0\right)\wedge1}\vee v_{-}}\wedge v_{+}$, where $f\in\overline\bsS_{(t,r)}^{\cH,1}$, $g({\bs{w}})=\sum_{j=1}^{d}g_{j}(w_{j})$, $g_{j}\in\overline\bsS_{(\pi_{j},r)}$, $({\bs{\pi}},t,r)\in (M^{\cB,1})^{d}\times\N^{*}\times\N$, with the fact that $\bsg\in\cH^{\alpha}(L,v_{-},v_{+})$ and $\bsg'_{j}$ taking values in $\cro{0,1/d}$ for all $j\in\{1,\ldots,d\}$, we have
\begin{align}
&\sup_{{\bs{w}}\in\cro{0,1}^{d}}\left|\bsg\left(\sum_{j=1}^{d}\bsg'_{j}(w_{j})\right) -\cro{f\cro{\left(\left(\sum_{j=1}^{d}g_{j}(w_{j})\right)\vee0\right)\wedge1}\vee v_{-}}\wedge v_{+}\right|\nonumber\\ 
\leq&\sup_{{\bs{w}}\in\cro{0,1}^{d}}\left|\bsg\left(\sum_{j=1}^{d}\bsg'_{j}(w_{j})\right) -f\cro{\left(\left(\sum_{j=1}^{d}g_{j}(w_{j})\right)\vee0\right)\wedge1}\right|\nonumber\\ 
\leq&\sup_{{\bs{w}}\in\cro{0,1}^{d}}\left|\bsg\left(\sum_{j=1}^{d}\bsg'_{j}(w_{j})\right)  -\bsg\cro{\left(\left(\sum_{j=1}^{d}g_{j}(w_{j})\right)\vee0\right)\wedge1}\right|+\left\|\bsg-f\right\|_{\infty}\nonumber\\
\leq&L\sup_{{\bs{w}}\in\cro{0,1}^{d}}\left|\left(\sum_{j=1}^{d}\bsg'_{j}(w_{j})\right)-\left(\sum_{j=1}^{d}g_{j}(w_{j})\right)\right|^{\alpha\wedge1}+\left\|\bsg-f\right\|_{\infty}\nonumber\\
\leq&L\left\|\left(\sum_{j=1}^{d}\left|\bsg'_{j}-g_{j}\right|\right)^{\alpha\wedge1}\right\|_{\infty}+\left\|\bsg-f\right\|_{\infty}.\label{c-bound-2}
\end{align}
We then apply Lemma~\ref{concave-subadditive} and Lemma~\ref{con-inequality} with $k=\infty$, $\mu$ being the Lebesgue measure (probability) and $u(z)=z^{\alpha\wedge1}$ to \eref{c-bound-2} and obtain
\begin{align}
&\left\|\bsg\left(\sum_{j=1}^{d}\bsg'_{j}\right) -\cro{f\cro{\left(g\vee0\right)\wedge1}\vee v_{-}}\wedge v_{+}\right\|_{\infty}\nonumber\\
\leq&L\sum_{j=1}^{d}\left\||\bsg'_{j}-g_{j}|^{\alpha\wedge1}\right\|_{\infty}+\left\|\bsg-f\right\|_{\infty}\nonumber\\
\leq&L\sum_{j=1}^{d}\left(\left\|\bsg'_{j}-g_{j}\right\|_{\infty}\right)^{\alpha\wedge1}+\left\|\bsg-f\right\|_{\infty}.\label{decom-bound-add}
\end{align}
We take $$r=r(\alpha,{\bm{\beta}})=\Bigl\lfloor\alpha\vee\max_{j=1,\ldots,d}\beta_{j}\Bigr\rfloor\in\N.$$ 
By Proposition~\ref{appro-aniso}, \ref{holder-appro} and Lemma~\ref{besov-uniformly}, \ref{holder-uniformly}, for all $\alpha,L\in\R_{+}^{*}$, ${\bs{\beta}},{\bf{p}},{\bf{R}}\in(\R_{+}^{*})^{d}$ such that $\beta_{j}>1/p_{j}$, all $({\bs{l}},t)=(l_{1},\ldots,l_{d},t)\in\N^{d}\times\N^{*}$ and any $\bsg\left(\sum_{j=1}^{d}\bsg'_{j}\right)\in\cF_{\cro{v_{-},v_{+}}}(\alpha,{\bm{\beta}},{\bf{p}},L,{\bf{R}})$, we have
\begin{equation}\label{holder-approximation-bound-add}
\inf_{f\in\bsS^{\cH,1}_{(t,r)}}\|\bsg-f\|_{\infty}=\inf_{f\in\overline\bsS^{\cH,1}_{(t,r)}}\|\bsg-f\|_{\infty}\leq C_{\alpha,{\bs{\beta}}}Lt^{-\alpha}
\end{equation}
and 
\begin{equation}\label{besov-appro-add}
\inf_{g_{j}\in\bsS^{\cB,1}_{(\pi(l_{j}),r)}}\|\bsg'_{j}-g_{j}\|_{\infty}=\inf_{g_{j}\in\overline\bsS^{\cB,1}_{(\pi(l_{j}),r)}}\|\bsg'_{j}-g_{j}\|_{\infty}\leq C_{\alpha,{\bm{\beta}},p_{j}}R_{j}2^{-l_{j}\beta_{j}}.
\end{equation}

Combining \eref{decom-bound-add}, \eref{holder-approximation-bound-add} and \eref{besov-appro-add}, we have for all $\alpha,L\in\R_{+}^{*}$, ${\bs{\beta}},{\bf{p}},{\bf{R}}\in(\R_{+}^{*})^{d}$ such that $\beta_{j}>1/p_{j}$, all $({\bs{l}},t)=(l_{1},\ldots,l_{d},t)\in\N^{d}\times\N^{*}$ and any $\bsg\left(\sum_{j=1}^{d}\bsg'_{j}\right)\in\cF_{\cro{v_{-},v_{+}}}(\alpha,{\bm{\beta}},{\bf{p}},L,{\bf{R}})$, 
\begin{align}
&\inf_{f\in\bsS^{\cH,1}_{(t,r)},\;g_{j}\in\bsS^{\cB,1}_{(\pi(l_{j}),r)}}\left\|\bsg\left(\sum_{j=1}^{d}\bsg'_{j}\right) -\cro{f\cro{\left(\left(\sum_{j=1}^{d}g_{j}\right)\vee0\right)\wedge1}\vee v_{-}}\wedge v_{+}\right\|_{2,P_{W}}^{2}\nonumber\\
\leq&\inf_{f\in\bsS^{\cH,1}_{(t,r)},\;g_{j}\in\bsS^{\cB,1}_{(\pi(l_{j}),r)}}\left\|\bsg\left(\sum_{j=1}^{d}\bsg'_{j}\right) -\cro{f\cro{\left(\left(\sum_{j=1}^{d}g_{j}\right)\vee0\right)\wedge1}\vee v_{-}}\wedge v_{+}\right\|_{\infty}^{2}\nonumber\\
\leq& C_{d}\left\{L^{2}\sum_{j=1}^{d}\cro{\left(\inf_{g_{j}\in\bsS^{\cB,1}_{(\pi(l_{j}),r)}}\|\bsg'_{j}-g'_{j}\|_{\infty}\right)^{\alpha\wedge1}}^{2}+\inf_{f\in\bsS^{\cH,1}_{(t,r)}}\|\bsg-f\|_{\infty}^{2}\right\}\nonumber\\
\leq& C_{d,\alpha,{\bm{\beta}},{\bf{p}}}L^{2}\cro{\sum_{j=1}^{d}R_{j}^{2(\alpha\wedge1)}2^{-2(\alpha\wedge1)l_{j}\beta_{j}}+t^{-2\alpha}}.\label{com-bound-add}
\end{align}
We denote ${\bs{\pi}}({\bs{l}})=(\pi(l_{1}),\ldots,\pi(l_{d}))$. For any $(l_{1},\ldots,l_{d},t,r)\in\N^{d}\times\N^{*}\times\N$, by Proposition~\ref{add-vc-bound} and \ref{appro-aniso}, we have
\begin{align}
V^{A}_{({\bm{\pi}}({\bm{l}}),t,r)}+\Delta({\bm{\pi}}({\bm{l}}),t,r)\leq&C\cro{\left(t+\sum_{j=1}^{d}|\pi(l_{j})|\right)(r+1)}\log(t+r+2)\nonumber\\
&+\cro{(3\log2)\left(\sum_{j=1}^{d}|\pi(l_{j})|\right)+r+t}\nonumber\\
\leq& C_{r}\left(t+\sum_{j=1}^{d}|\pi({l_{j}})|\right)\log(t+r+2)\nonumber\\
\leq& C_{\alpha,{\bm{\beta}},{\bf{p}}}\left(t+\sum_{j=1}^{d}2^{l_{j}}\right)\log(t+r+2),\label{dim-bound-add}
\end{align}
where $C$ is a numerical constant, $C_{r}$ is a numerical constant depending only on $r$ and $C_{\alpha,{\bm{\beta}},{\bf{p}}}$ is a numerical constant depending only on $\alpha$, ${\bm{\beta}}$, ${\bf{p}}$.

Under Assumption~\ref{model-parametrize}, applying \eref{iid-bound} together with \eref{com-bound-add} and \eref{dim-bound-add}, we derive that for all $\alpha,L\in\R_{+}^{*}$, ${\bs{\beta}},{\bf{p}},{\bf{R}}\in(\R_{+}^{*})^{d}$ such that $\beta_{j}>1/p_{j}$, all $({\bs{l}},t)=(l_{1},\ldots,l_{d},t)\in\N^{d}\times\N^{*}$ and any $\bsg\left(\sum_{j=1}^{d}\bsg'_{j}\right)\in\cF_{\cro{v_{-},v_{+}}}(\alpha,{\bm{\beta}},{\bf{p}},L,{\bf{R}})$, 
\begin{align}
\E\cro{h^{2}(R_{\bsg\et},R_{\widehat \bsg})}\leq& C_{\kappa}\inf_{({\bm{\pi}},t,r)\in (M^{\cB,1})^{d}\times(\N^{*})^{2}}\left[\inf_{\widetilde\bsg\in\gGamma^{A}_{({\bm{\pi}},t,r)}}\left\|\bsg\left(\sum_{j=1}^{d}\bsg'_{j}\right)-\widetilde\bsg\right\|_{2,P_{W}}^{2}\right.\nonumber\\
&\left.+\frac{\Delta({\bm{\pi}},t,r)}{n}+\frac{V_{({\bm{\pi}},t,r)}}{n}\left(1+\log n\right)\right]\nonumber\\
\leq& C_{\kappa,d,\alpha,{\bm{\beta}},{\bf{p}}}(1+\log n)\inf_{(l_{1},\ldots,l_{d},t)\in\N^{d}\times\N^{*}}\left[\left(L^{2}t^{-2\alpha}+\frac{t}{n}\right)\right.\nonumber\\
&\left.+\sum_{j=1}^{d}\left(L^{2}R_{j}^{2(\alpha\wedge1)}2^{-2(\alpha\wedge1)l_{j}\beta_{j}}+\frac{2^{l_{j}}}{n}\right)\right]\log(t+r+2).\label{add-tobe-optimized}
\end{align}
To conclude, we need to optimize the right hand side of \eref{add-tobe-optimized}.
We choose $t\geq1$ such that 
\begin{equation*}
t-1<\left(nL^{2}\right)^{\frac{1}{1+2\alpha}}\leq t,
\end{equation*}
therefore $L^{2}t^{-2\alpha}\leq t/n$ and $t<1+\left(nL^{2}\right)^{\frac{1}{1+2\alpha}}$. As a consequence, we have
\begin{align}\label{D-1-add}
L^{2}t^{-2\alpha}+\frac{t}{n}&\leq2\frac{t}{n}\leq\frac{2}{n}+2L^{\frac{2}{2\alpha+1}}n^{-\frac{2\alpha}{2\alpha+1}}.
\end{align}
Moreover, we note that if $nL^{2}<1$, we choose $t=1$, then
\begin{equation}\label{log-1-add}
\log(t+r+2)\leq\log\left(r+3\right)=C_{\alpha,{\bm{\beta}}}.
\end{equation}
Otherwise $nL^{2}\geq1$,
\begin{align}
\log(t+r+2)&\leq\log\cro{\left(nL^{2}\right)^{\frac{1}{2\alpha+1}}+r+3}\nonumber\\
&\leq\log\cro{C_{\alpha,{\bm{\beta}}}\left(nL^{2}\right)^{\frac{1}{2\alpha+1}}}\nonumber\\
&\leq C_{\alpha,{\bm{\beta}}}\left(\log n\vee\log L^{2}\vee1\right).\label{log-2-add}
\end{align}
For any $j\in\left\{1,\ldots,d\right\}$, if $nL^{2}R_{j}^{2(\alpha\wedge1)}<1$, we take $l_{j}=0$ so that
\begin{equation}\label{S-1-add}
L^{2}R_{j}^{2(\alpha\wedge1)}2^{-2(\alpha\wedge1)l_{j}\beta_{j}}+\frac{2^{l_{j}}}{n}<\frac{2}{n}.
\end{equation}
Otherwise, we take $l_{j}$ as the largest natural number such that $$\frac{2^{l_{j}}}{n}\leq L^{2}R_{j}^{2(\alpha\wedge1)}2^{-2(\alpha\wedge1)l_{j}\beta_{j}},$$ which yields
\begin{align}
L^{2}R_{j}^{2(\alpha\wedge1)}2^{-2(\alpha\wedge1)l_{j}\beta_{j}}+\frac{2^{l_{j}}}{n}&\leq L^{2}R_{j}^{2(\alpha\wedge1)}2^{1-2(\alpha\wedge1)l_{j}\beta_{j}}\nonumber\\
&\leq C_{\alpha,{\bm{\beta}}}\cro{L^{2}R_{j}^{2(\alpha\wedge1)}}^{\frac{1}{2(\alpha\wedge1)\beta_{j}+1}}n^{-\frac{2(\alpha\wedge1)\beta_{j}}{2(\alpha\wedge1)\beta_{j}+1}}\nonumber\\
&\leq C_{\alpha,{\bm{\beta}}}\left(LR_{j}^{\alpha\wedge1}\right)^{\frac{2}{2(\alpha\wedge1)\beta_{j}+1}}n^{-\frac{2(\alpha\wedge1)\beta_{j}}{2(\alpha\wedge1)\beta_{j}+1}}.\label{S-2-add}
\end{align}
Combining \eref{add-tobe-optimized}, \eref{D-1-add}, \eref{log-1-add}, \eref{log-2-add}, \eref{S-1-add} and \eref{S-2-add}, we obtain whatever the distribution of $W$, for all $\alpha,L\in\R_{+}^{*}$, ${\bs{\beta}},{\bf{p}},{\bf{R}}\in(\R_{+}^{*})^{d}$ such that $\beta_{j}>1/p_{j}$ and any $\bsg\et\in\cF_{\cro{v_{-},v_{+}}}(\alpha,{\bm{\beta}},{\bf{p}},L,{\bf{R}})$,
\begin{align*}
&C'_{\kappa,d,\alpha,{\bm{\beta}},{\bf{p}}}\E\cro{h^{2}(R_{\bsg\et},R_{\widehat \bsg})}\nonumber\\
\leq&\left\{\cro{\sum_{j=1}^{d}\left(LR_{j}^{\alpha\wedge1}\right)^{\frac{2}{2(\alpha\wedge1)\beta_{j}+1}}n^{-\frac{2(\alpha\wedge1)\beta_{j}}{2(\alpha\wedge1)\beta_{j}+1}}}+L^{\frac{2}{2\alpha+1}}n^{-\frac{2\alpha}{2\alpha+1}}+\frac{1}{n}\right\}\cL^{2}_{n},
\end{align*}
where $\cL_{n}=\log n\vee\log L^{2}\vee1$. Finally, the conclusion follows by taking the supremum over $\cF_{\cro{v_{-},v_{+}}}(\alpha,{\bm{\beta}},{\bf{p}},L,{\bf{R}})$.

\subsubsection*{\bf{A.4 Proof of Corollary~\ref{multi-risk-bound}}}
We first present the following result which can be proved by a similar argument as the proof of Lemma~\ref{besov-uniformly}.
\begin{lem}\label{inner-uniformly}
Let $\cC_{d}=\left\{(c_{1},\ldots,c_{d})\in\R^{d},\;\sum_{j=1}^{d}|c_{j}|\leq1\right\}.$ We denote $\overline\bsS_{\cC_{d}}$ the collection of functions on $\cro{0,1}^{d}$ of the form 
\begin{equation}\label{inner}
f({\bs{w}})=\frac{1}{2}\left(\langle c,{\bs{w}}\rangle+1\right),\mbox{\quad for all\quad}{\bs{w}}\in\cro{0,1}^{d},
\end{equation}
with $c\in\cC_{d}$ and $\bsS_{\cC_{d}}$ the collection of functions of the form in \eref{inner} but with $c\in\cC_{d}\cap\Q^{d}$. Then $\bsS_{\cC_{d}}$ is dense in $\overline\bsS_{\cC_{d}}$ with respect to the supremum norm.
\end{lem}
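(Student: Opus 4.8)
The plan is to reduce the supremum-norm distance between two functions of the form \eref{inner} to the discrepancy between their coefficient vectors, and then to approximate an arbitrary $c\in\cC_{d}$ by a rational vector that still lies in $\cC_{d}$. First I would record that for $c,c'\in\cC_{d}$ with associated functions $f_{c},f_{c'}$ given by \eref{inner}, one has for every ${\bs{w}}\in\cro{0,1}^{d}$
\[
\ab{f_{c}({\bs{w}})-f_{c'}({\bs{w}})}=\frac12\ab{\scal{c-c'}{{\bs{w}}}}\le\frac12\sum_{j=1}^{d}\ab{c_{j}-c'_{j}}\,\ab{w_{j}}\le\frac12\sum_{j=1}^{d}\ab{c_{j}-c'_{j}},
\]
where the last inequality uses $\ab{w_{j}}\le1$ on $\cro{0,1}^{d}$. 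Consequently $\norm{f_{c}-f_{c'}}_{\infty}\le\tfrac12\sum_{j=1}^{d}\ab{c_{j}-c'_{j}}$, so it suffices to approximate $c$, coordinate by coordinate, by an element of $\cC_{d}\cap\Q^{d}$.

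Next, given $\eps>0$ and $c\in\cC_{d}$, I would construct such a $c'$ while respecting the only constraint that matters, namely $\sum_{j=1}^{d}\ab{c'_{j}}\le1$. The idea is to round each coordinate \emph{towards zero}. For each $j$ with $c_{j}\ne0$, since $\Q$ is dense in $\R$, I pick a rational $q_{j}$ in the nondegenerate interval $\cro{(\ab{c_{j}}-\eps/d)_{+},\ab{c_{j}}}$ and set $c'_{j}=\sgnn(c_{j})q_{j}$; for $c_{j}=0$ I set $c'_{j}=0$. Then $\ab{c'_{j}}=q_{j}\le\ab{c_{j}}$ for every $j$, so $\sum_{j=1}^{d}\ab{c'_{j}}\le\sum_{j=1}^{d}\ab{c_{j}}\le1$, which shows $c'\in\cC_{d}\cap\Q^{d}$, while $\ab{c_{j}-c'_{j}}=\ab{c_{j}}-q_{j}\le\eps/d$ gives $\sum_{j=1}^{d}\ab{c_{j}-c'_{j}}\le\eps$. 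Combining this with the bound of the first paragraph yields $\norm{f_{c}-f_{c'}}_{\infty}\le\eps/2<\eps$, which proves the claimed density.

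The argument parallels the proof of Lemma~\ref{besov-uniformly}, where rational coefficients are used to approximate real ones; the only additional feature here is the side constraint defining $\cC_{d}$. The mild obstacle is precisely ensuring that the rational approximant remains inside the $\ell_{1}$-ball, and rounding each coordinate towards zero settles this immediately, since that operation can only decrease $\sum_{j}\ab{c'_{j}}$. No compactness argument or uniform control over ${\bs{w}}$ is required, because the estimate $\ab{w_{j}}\le1$ already eliminates the dependence on ${\bs{w}}$ at the very first step.
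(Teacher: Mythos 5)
Your proof is correct and follows essentially the route the paper intends: the paper gives no explicit proof of this lemma, merely noting it follows ``by a similar argument as the proof of Lemma~\ref{besov-uniformly}'', i.e.\ by bounding the sup-norm gap by the $\ell_{1}$-distance of the coefficient vectors and invoking density of $\Q$ in $\R$. Your rounding-towards-zero device is exactly the right way to handle the one point the paper leaves implicit, namely that the rational approximant must stay inside the $\ell_{1}$-ball $\cC_{d}$.
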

Now let us turn to prove Corollary~\ref{multi-risk-bound}. For all ${\bs{\alpha}}\in(\R_{+}^{*})^{l}$, $L>0$, any $\bsg\et=\bsg\circ\bsg'\in\cG_{\cro{v_{-},v_{+}}}({\bs{\alpha}},L)$, where $\bsg'({\bs{w}})=(\bsg'_{1}({\bs{w}}),\ldots,\bsg'_{l}({\bs{w}}))$ with $\bsg'_{j}\in\overline\bsS_{\cC_{d}}$ for $j\in\{1,\ldots,l\}$, $\bsg\in\cH^{{\bs{\alpha}}}\left(L,v_{-},v_{+}\right)$ and any $f\in\bsS_{({\bs{t}},r)}^{\cH,l}$, $g:\cro{0,1}^{d}\rightarrow\cro{0,1}^{l}$ defined as $g({\bs{w}})=(g_{1}({\bs{w}}),\ldots,g_{l}({\bs{w}}))$ with $g_{j}\in\bsS_{\cC_{d}}$ for $j\in\{1,\ldots,l\}$, we have
\begin{align}
\|\bsg\circ\bsg'-(f\circ g)\vee v_{-})\wedge v_{+}\|_{\infty}&\leq\|\bsg\circ\bsg'-f\circ g\|_{\infty}\nonumber\\
&\leq\|\bsg\circ\bsg'-\bsg\circ g\|_{\infty}+\|\bsg\circ g-f\circ g\|_{\infty}\nonumber\\
&\leq\left\|L\sum_{j=1}^{l}\left|\bsg'_{j}-g_{j}\right|^{\alpha_{j}\wedge1}\right\|_{\infty}+\|\bsg-f\|_{\infty}\nonumber\\
&\leq L\sum_{j=1}^{l}\left\|\left|\bsg'_{j}-g_{j}\right|^{\alpha_{j}\wedge1}\right\|_{\infty}+\|\bsg-f\|_{\infty}.\label{multi-1}
\end{align}
We apply Lemma~\ref{con-inequality} to \eref{multi-1} by taking $k=\infty$, $\mu$ the Lebesgue probability and $u(z)=z^{\alpha_{j}\wedge1}$ for each $j\in\{1,\ldots,l\}$ and obtain 
\begin{equation}\label{multi-2}
\|\bsg\circ\bsg'-(f\circ g\vee v_{-})\wedge v_{+}\|_{\infty}\leq L\sum_{j=1}^{l}\left(\left\|\bsg'_{j}-g_{j}\right\|_{\infty}\right)^{\alpha_{j}\wedge1}+\|\bsg-f\|_{\infty}.
\end{equation}
We take $r=\max_{j=1,\ldots,l}\lfloor\alpha_{j}\rfloor\in\N.$ By Proposition~\ref{holder-appro}, for any $\bsg\et=\bsg\circ\bsg'\in\cG_{\cro{v_{-},v_{+}}}({\bs{\alpha}},L)$ and all ${\bs{t}}=(t_{1},\ldots,t_{l})\in(\N^{*})^{l}$, we have 
\begin{equation*}
\inf_{f\in\overline\bsS^{\cH,l}_{({\bs{t}},r)}}\|\bsg- f\|_{\infty}\leq C_{l,{\bs{\alpha}}}L\sum_{j=1}^{l}t_{j}^{-\alpha_{j}},
\end{equation*}
where $C_{l,{\bs{\alpha}}}$ is a constant depending on $l$ and ${\bs{\alpha}}$ only. Then by Lemma~\ref{holder-uniformly}, $\bsS^{\cH,l}_{({\bs{t}},r)}$ is dense in $\overline\bsS^{\cH,l}_{({\bs{t}},r)}$ with respect to the supremum norm $\|\cdot\|_{\infty}$, we obtain
\begin{equation}\label{multi-uniform}
\inf_{f\in\bsS^{\cH,l}_{({\bs{t}},r)}}\|\bsg-f\|_{\infty}=\inf_{f\in\overline\bsS^{\cH,l}_{({\bs{t}},r)}}\|\bsg-f\|_{\infty}\leq C_{l,{\bs{\alpha}}}L\sum_{j=1}^{l}t_{j}^{-\alpha_{j}}.
\end{equation}
Therefore, following from \eref{multi-2}, \eref{multi-uniform} and Lemma~\ref{inner-uniformly}, for all ${\bs{\alpha}}\in(\R_{+}^{*})^{l}$, $L>0$, any $\bsg\et=\bsg\circ\bsg'\in\cG_{\cro{v_{-},v_{+}}}({\bs{\alpha}},L)$ and all ${\bs{t}}=(t_{1},\ldots,t_{l})\in(\N^{*})^{l}$, 
\begin{align}
&\inf_{f\in\bsS^{\cH,l}_{({\bs{t}},r)},\;g_{j}\in\bsS_{\cC_{d}}}\|\bsg\circ\bsg'-(f\circ g\vee v_{-})\wedge v_{+}\|_{2,P_{W}}^{2}\nonumber\\
\leq&\inf_{f\in\bsS^{\cH,l}_{({\bs{t}},r)},\;g_{j}\in\bsS_{\cC_{d}}}\|\bsg\circ\bsg'-(f\circ g\vee v_{-})\wedge v_{+}\|_{\infty}^{2}\nonumber\\
\leq& C_{l,{\bs{\alpha}}}L^{2}\left(\sum_{j=1}^{l}t_{j}^{-\alpha_{j}}\right)^{2}.\label{bias-multi}
\end{align}
Moreover, for any $r\in\N$ and ${\bs{t}}=(t_{1},\ldots,t_{l})\in(\N^{*})^{l}$, with the fact that
\begin{align*}
\Delta({\bs{t}},r)=\sum_{j=1}^{l}t_{j}+r&\leq l\prod_{j=1}^{l}t_{j}+r\leq l\left(\prod_{j=1}^{l}t_{j}\right)(r+1)^{l}
\end{align*}
and Proposition~\ref{multi-vc}, we have
\begin{align}
V^{M}_{({\bs{t}},r)}+\Delta({\bs{t}},r)&\leq C_{l}\cro{d+\left(\prod_{j=1}^{l}t_{j}\right)(r+1)^{l}}\log\cro{\left(\sum_{j=1}^{l}t_{j}\right)+lr+l+1}\nonumber\\
&\leq C_{l,{\bs{\alpha}}}\cro{d+\left(\prod_{j=1}^{l}t_{j}\right)}\log\cro{\left(\sum_{j=1}^{l}t_{j}\right)+lr+l+1},\label{vc-multi-b}
\end{align}
where $C_{l}$ is a numerical constant depending only on $l$ and $C_{l,{\bs{\alpha}}}$ is a numerical constant depending on $l$, ${\bs{\alpha}}$ only

Under Assumption~\ref{model-parametrize}, applying~\eref{iid-bound} together with the inequalities \eref{bias-multi} and \eref{vc-multi-b}, we derive that for all ${\bs{\alpha}}\in(\R_{+}^{*})^{l}$ and $L>0$, any $\bsg\et=\bsg\circ\bsg'\in\cG_{\cro{v_{-},v_{+}}}({\bs{\alpha}},L)$, whatever the distribution of $W$,
\begin{align}
&\E\cro{h^{2}(R_{\bsg\et},R_{\widehat\bsg})}\nonumber\\
\leq&C_{\kappa}\inf_{({\bs{t}},r)\in(\N^{*})^{l}\times\N}\cro{\inf_{\widetilde\bsg\in\gGamma^{M}_{({\bs{t}},r)}}\|\bsg\circ\bsg'-\widetilde\bsg\|_{2,P_{W}}^{2}+\frac{\Delta({\bs{t}},r)}{n}+\frac{V^{M}_{({\bs{t}},r)}}{n}(1+\log n)}\nonumber\\
\leq&C_{\kappa,l,{\bs{\alpha}}}(1+\log n)\inf_{{\bs{t}}\in(\N^{*})^{l}}\cro{L^{2}\left(\sum_{j=1}^{l}t_{j}^{-\alpha_{j}}\right)^{2}+\frac{\prod_{j=1}^{l}t_{j}}{n}+\frac{d}{n}}\log(U),\label{multi-b-risk-1}
\end{align}
where $U=\sum_{j=1}^{l}t_{j}+lr+l+1$.
We then optimize the risk bound given on the right hand side of \eref{multi-b-risk-1}. For each $j\in\{1,\ldots,l\}$, we choose $t_{j}\geq1$ satisfying
$$t_{j}-1<(nL^{2})^{\frac{\overline\alpha}{(2\overline\alpha+l)\alpha_{j}}}\leq t_{j},$$ where $\overline\alpha$ denotes the harmonic mean of $\alpha_{1},\ldots,\alpha_{l}$.
Therefore, we have
\begin{equation}\label{multi-bias}
L^{2}\left(\sum_{j=1}^{l}t_{j}^{-\alpha_{j}}\right)^{2}\leq l^{2}L^{2}(nL^{2})^{-\frac{2\overline\alpha}{2\overline\alpha+l}}=l^{2}L^{\frac{2l}{2\overline\alpha+l}}n^{-\frac{2\overline\alpha}{2\overline\alpha+l}}.
\end{equation}
If $nL^{2}\leq1$, then $t_{j}=1$ for all $j\in\{1,\ldots,l\}$ hence
\begin{equation}\label{multi-a-1}
\frac{\prod_{j=1}^{l}t_{j}}{n}\leq\frac{1}{n}
\end{equation}
and for some numerical constant $C_{l,{\bs{\alpha}}}$ depending on $l$ and ${\bs{\alpha}}$ only
\begin{equation}\label{muliti-b-1}
\log(U)=\log\cro{\left(\sum_{j=1}^{l}t_{j}\right)+lr+l+1}\leq C_{l,{\bs{\alpha}}}.
\end{equation}
Otherwise, 
\begin{align}
\frac{\prod_{j=1}^{l}t_{j}}{n}&\leq\frac{\prod_{j=1}^{l}2(nL^{2})^{\frac{\overline\alpha}{(2\overline\alpha+l)\alpha_{j}}}}{n}=\frac{2^{l}(nL^{2})^{\frac{\overline\alpha}{2\overline\alpha+l}\sum_{j=1}^{l}\frac{1}{\alpha_{j}}}}{n}\nonumber\\
&\leq 2^{l}\frac{(nL^{2})^{\frac{l}{2\overline\alpha+l}}}{n}\leq2^{l}L^{\frac{2l}{2\overline\alpha+l}}n^{-\frac{2\overline\alpha}{2\overline\alpha+l}}\label{multi-a-2}
\end{align}
and 
\begin{align}
\log\cro{\left(\sum_{j=1}^{l}t_{j}\right)+lr+l+1}&\leq\log\cro{l\left(\prod_{j=1}^{l}t_{j}\right)+lr+l+1}\nonumber\\
&\leq\log\cro{C_{l}\left(nL^{2}\right)^{\frac{l}{2\overline\alpha+l}}+C_{l,{\bs{\alpha}}}}\nonumber\\
&\leq C_{l,{\bs{\alpha}}}\left(\log n\vee\log L^{2}\vee1\right).\label{muliti-b-2}
\end{align}
Plugging \eref{multi-bias}, \eref{multi-a-1}, \eref{muliti-b-1}, \eref{multi-a-2}, \eref{muliti-b-2} into \eref{multi-b-risk-1}, we have that whatever the distribution of $W$, for all ${\bs{\alpha}}\in(\R_{+}^{*})^{l}$ and $L>0$, any $\bsg\et\in\cG_{\cro{v_{-},v_{+}}}({\bs{\alpha}},L)$
\begin{align}
\E\cro{h^{2}(R_{\bsg\et},R_{\widehat\bsg})}&\leq C_{\kappa,l,{\bs{\alpha}}}\left(L^{\frac{2l}{2\overline\alpha+l}}n^{-\frac{2\overline\alpha}{2\overline\alpha+l}}+\frac{d}{n}\right)\left(\log n\vee\log L^{2}\vee1\right)^{2},
\end{align}
where $C_{\kappa,l,{\bs{\alpha}}}$ is a constant depending only on $\kappa$, $l$ and ${\bs{\alpha}}$. The conclusion finally follows by taking the supremum over the set $\cG_{\cro{v_{-},v_{+}}}({\bs{\alpha}},L)$.

\subsubsection*{\bf{A.5 Proof of Corollary~\ref{takagi}}}
For any $\bsg\et\in\cF_{\cro{v_{-},v_{+}}}(t,l,{\bf{p}},K)$, we first rewrite it as $\bsg\et = \sum_{k\in\N^{*}}t^{k}\bsg_{1}\left(\bsg_{2}^{\circ k}\right)$, where $t\in(-1,1)$, $\bsg_{1}\in\bsS_{(l,p_{1})}$ and $\bsg_{2}\in\bsS_{(l,p_{2})}$. For any $m\in\N^{*}$, we denote $\bsg\et_{m}=\sum_{k=1}^{m}t^{k}\bsg_{1}\left(\bsg_{2}^{\circ k}\right)$ the m-partial sum of the function $\bsg\et$. We then apply Proposition~4.4 of \cite{daubechies2019} and obtain that $\bsg\et_{m}\in\overline\bsS_{(l(m+1),p_{1}+p_{2}+2)}$. We note that for any $\bsg\et_{m}=\sum_{k=1}^{m}t^{k}\bsg_{1}\left(\bsg_{2}^{\circ k}\right)$, there exists a sequence of functions $\{\bsg_{i}\}_{i\in\N}$ with $\bsg_{i}= \sum_{k=1}^{m}t^{k}_{i}\bsg_{1}\left(\bsg_{2}^{\circ k}\right)\in\bsS_{(l(m+1),p_{1}+p_{2}+2)}$ and $t_{i}\in(-1,1)\cap\Q$ such that 
\begin{align}
\lim_{i\rightarrow+\infty}\|\bsg\et_{m}-\bsg_{i}\|_{\infty}&=\lim_{i\rightarrow+\infty}\left|\sum_{k=1}^{m}\left(t^{k}-t^{k}_{i}\right)\bsg_{1}\left(\bsg_{2}^{\circ k}\right)(w)\right|\nonumber\\
&\leq K\lim_{i\rightarrow+\infty}\sum_{k=1}^{m}\left|t^{k}-t^{k}_{i}\right|\nonumber\\
&\leq \frac{Km(m+1)}{2}\lim_{i\rightarrow+\infty}\left|t-t_{i}\right|=0,\label{takagi-1}
\end{align}
since $\Q$ is dense in $\R$. Therefore, with the fact that $\bsg\et$ taking values in $\cro{v_{-},v_{+}}$ and \eref{takagi-1}, we have
\begin{align}
\inf_{\overline\bsg\in\gGamma_{(l(m+1),p_{1}+p_{2}+2)}}\|\bsg\et-\overline\bsg\|_{\infty}&\leq\inf_{\overline\bsg\in\bsS_{(l(m+1),p_{1}+p_{2}+2)}}\|\bsg\et-\overline\bsg\|_{\infty}\nonumber\\
&\leq\|\bsg\et-\bsg\et_{m}\|_{\infty}+\inf_{\overline\bsg\in\bsS_{(l(m+1),p_{1}+p_{2}+2)}}\|\bsg\et_{m}-\overline\bsg\|_{\infty}\nonumber\\
&\leq\sup_{w\in\cro{0,1}}\left|\sum_{k=m+1}^{+\infty}t^{k}\bsg_{1}\left(\bsg_{2}^{\circ k}(w)\right)\right|\nonumber\\
&\leq C_{t,K}|t|^{m+1},\label{tagaki-1}
\end{align}
where $C_{t,K}$ stands for a numerical constant depending on $t$ and $K$ only. We denote $V_{(l(m+1),p_{1}+p_{2}+2)}$ the VC dimension of $\gGamma_{(l(m+1),p_{1}+p_{2}+2)}$. With the fact that $\bsS_{(l(m+1),p_{1}+p_{2}+2)}\subset\overline\bsS_{(l(m+1),p_{1}+p_{2}+2)}$, Proposition~\ref{vc-neural} and $$\gGamma_{(l(m+1),p_{1}+p_{2}+2)}=\left\{(\bsg\vee v_{-})\wedge v_{+},\;\bsg\in\bsS_{(l(m+1),p_{1}+p_{2}+2)}\right\},$$ we derive that for some numerical constant $C$
\begin{align*}
V_{(l(m+1),p_{1}+p_{2}+2)}\leq Cl_{0}\cro{p_{0}^{2}(l_{0}-1)+p_{0}(l_{0}+2)+1}\log\cro{(l_{0}+1)\left(\frac{p_{0}l_{0}}{2}+1\right)},
\end{align*}
where $l_{0}=l(m+1)$ and $p_{0}=p_{1}+p_{2}+2$. Then it follows by a basic computation that
\begin{equation}\label{tagaki-2}
V_{(l(m+1),p_{1}+p_{2}+2)}+\Delta(l(m+1),p_{1}+p_{2}+2)\leq C_{l,{\bf{p}}}(m+1)^{3},
\end{equation}
where $C_{l,{\bf{p}}}$ is a numerical constant depending on $l$ and ${\bf{p}}$ only.

We take $L=l(m+1)$ and $p=p_{1}+p_{2}+2$. Under Assumption~\ref{model-parametrize}, applying \eref{iid-bound} together with the inequalities \eref{tagaki-1} and \eref{tagaki-2}, we have no matter what the distribution of $W$ is, for all $t\in(-1,1)$, $l\in\N^{*}$, ${\bf{p}}\in(\N^{*})^{2}$ and $K\geq0$, for any $\bsg\et\in\cF_{\cro{v_{-},v_{+}}}(t,l,{\bf{p}},K)$,
\begin{align}
\E\cro{h^{2}(R_{\bsg\et},R_{\widehat\bsg})}\leq& C_{\kappa,{\bf{p}},l}\inf_{m\in\N^{*}}\left[\inf_{\overline\bsg\in\gGamma_{(l(m+1),p_{1}+p_{2}+2)}}\|\bsg\et-\overline\bsg\|_{2,P_{W}}^{2}\right.\nonumber\\
&\left.+\frac{(m+1)^{3}}{n}(1+\log n)\right]\nonumber\\
\leq& C_{\kappa,{\bf{p}},l,t,K}\inf_{m\in\N^{*}}\cro{|t|^{2(m+1)}+\frac{(m+1)^{3}}{n}(1+\log n)}.\label{takagi-b1}
\end{align}
Now we only need to optimize the right hand side of \eref{takagi-b1}. If $|t|^{4}\leq1/n$, we choose $m=1$ so that 
\begin{equation}\label{tagaki-a}
\E\cro{h^{2}(R_{\bsg\et},R_{\widehat\bsg})}\leq C_{\kappa,{\bf{p}},l,t,K}\frac{1}{n}(1+\log n).
\end{equation}
Otherwise, we choose $m\in\N^{*}$ such that
$$m<\frac{\log n}{-2\log|t|}\leq m+1.$$
With this choice, $|t|^{2(m+1)}\leq1/n$ and we derive from \eref{takagi-b1},
\begin{equation}\label{tagaki-b}
\E\cro{h^{2}(R_{\bsg\et},R_{\widehat\bsg})}\leq C_{\kappa,{\bf{p}},l,t,K}\frac{(1+\log n)^{4}}{n}.
\end{equation}
Combining the results in \eref{tagaki-a} and \eref{tagaki-b}, whatever the distribution of $W$, for all $t\in(-1,1)$, $l\in\N^{*}$, ${\bf{p}}\in(\N^{*})^{2}$ and $K\geq0$, any $\bsg\et\in\cF_{\cro{v_{-},v_{+}}}(t,l,{\bf{p}},K)$, we have
\begin{equation}\label{takagi-fianl}
\E\cro{h^{2}(R_{\bsg\et},R_{\widehat\bsg})}\leq C_{\kappa,{\bf{p}},l,t,K}\frac{1}{n}(1+\log n)^{4}.
\end{equation}
Then the conclusion follows by taking the supremum over $\cF_{\cro{v_{-},v_{+}}}(t,l,{\bf{p}},K)$ on both sides of \eref{takagi-fianl}.

\subsubsection*{\bf{A.6 Proof of Corollary~\ref{composite-holder}}}
For any $(L,p,{\bs{s}})\in(\N^{*})^{2}\times\{0,1\}^{\overline p}$, let $\overline\bsS'_{(L,p,{\bs{s}})}\subset\overline\bsS_{(L,p,{\bs{s}})}$ be the collection of functions based on sparse neural network, where all the non-zero parameters vary in $\cro{-1,1}$ and $\bsS'_{(L,p,{\bs{s}})}\subset\overline\bsS'_{(L,p,{\bs{s}})}$ be the collection of functions, where all the non-zero parameters vary in $\cro{-1,1}\cap\Q$. We first show the following result.
\begin{lem}\label{dense-neural}
For any $(L,p,{\bs{s}})\in(\N^{*})^{2}\times\{0,1\}^{\overline p}$, $\bsS'_{(L,p,{\bs{s}})}$ is dense in $\overline\bsS'_{(L,p,{\bs{s}})}$ with respect to the supremum norm $\|\cdot\|_{\infty}$.
\end{lem}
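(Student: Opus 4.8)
The plan is to show that the map sending an admissible parameter tuple to the corresponding network function is Lipschitz from the parameter set, equipped with the max-entry distance, into the space of continuous functions on $\cro{0,1}^{d}$ equipped with $\|\cdot\|_{\infty}$; density of $\bsS'_{(L,p,{\bs{s}})}$ in $\overline\bsS'_{(L,p,{\bs{s}})}$ then follows immediately from the density of $\cro{-1,1}\cap\Q$ in $\cro{-1,1}$.

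First I would fix the sparsity pattern ${\bs{s}}$ and encode a function in $\overline\bsS'_{(L,p,{\bs{s}})}$ by its tuple of active (non-zero) parameters $\Theta=(A_{0},b_{0},\ldots,A_{L},b_{L})$, all entries lying in $\cro{-1,1}$ (the frozen entries being $0$). Writing $x_{0}=w$ and $x_{l+1}=\sigma(A_{l}x_{l}+b_{l})$, I would record an a priori bound on the intermediate layers: since $\sigma$ is $1$-Lipschitz with $\sigma(0)=0$ and each row of $A_{l}$ has at most $p$ non-zero entries bounded by $1$ in absolute value, one gets $\|x_{l+1}\|_{\infty}\leq p\|x_{l}\|_{\infty}+1$, whence $\|x_{l}\|_{\infty}\leq R_{l}$ for a constant $R_{l}=R_{l}(p)$ obtained by iterating from $\|x_{0}\|_{\infty}\leq1$. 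This is where the restriction to $\cro{-1,1}$ (rather than all of $\R$) is essential: it renders the parameter set compact and the layer outputs uniformly bounded on $\cro{0,1}^{d}$.

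Next I would run the perturbation estimate. For two admissible tuples $\Theta,\Theta'$ with respective layer outputs $x_{l},x_{l}'$ at the same input $w$, using that $\sigma$ is $1$-Lipschitz and splitting $A_{l}x_{l}+b_{l}-(A_{l}'x_{l}'+b_{l}')=(A_{l}-A_{l}')x_{l}+A_{l}'(x_{l}-x_{l}')+(b_{l}-b_{l}')$ yields
\begin{equation*}
\|x_{l+1}-x_{l+1}'\|_{\infty}\leq p\,\|x_{l}-x_{l}'\|_{\infty}+(pR_{l}+1)\,\|\Theta-\Theta'\|_{\infty},
\end{equation*}
where $\|\Theta-\Theta'\|_{\infty}$ denotes the largest entrywise discrepancy. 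Solving this linear recursion (with $x_{0}=x_{0}'=w$), and treating the final affine layer $M_{L}$ — which carries no activation — in the same fashion, gives $|f_{\Theta}(w)-f_{\Theta'}(w)|\leq C(L,p)\,\|\Theta-\Theta'\|_{\infty}$ with a constant independent of $w\in\cro{0,1}^{d}$; taking the supremum over $w$ produces $\|f_{\Theta}-f_{\Theta'}\|_{\infty}\leq C(L,p)\,\|\Theta-\Theta'\|_{\infty}$.

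To conclude, given any $f\in\overline\bsS'_{(L,p,{\bs{s}})}$ with parameters $\Theta$, I would approximate each active entry of $\Theta$ by a rational in $\cro{-1,1}$, leaving the frozen zero entries untouched so the sparsity pattern is preserved; this is possible since $\cro{-1,1}\cap\Q$ is dense in $\cro{-1,1}$. The resulting tuple defines some $f'\in\bsS'_{(L,p,{\bs{s}})}$ with $\|\Theta-\Theta'\|_{\infty}$ arbitrarily small, and the Lipschitz estimate makes $\|f-f'\|_{\infty}$ correspondingly small. The only genuinely delicate point is the propagation bound, namely controlling how the factor $p$ and the layer bounds $R_{l}$ compound through the depth-$L$ composition; everything else is a direct consequence of the Lipschitz continuity of $\sigma$ together with the compactness granted by the $\cro{-1,1}$ constraint.
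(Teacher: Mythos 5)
Your proof is correct and follows essentially the same route as the paper's: both arguments establish that, on the compact parameter cube $\cro{-1,1}$, the map from parameters to the network function is Lipschitz for $\|\cdot\|_{\infty}$ (using the $1$-Lipschitzness of $\sigma$ and an a priori bound on the intermediate layer outputs that compounds through the depth), and then invoke the density of $\cro{-1,1}\cap\Q$ in $\cro{-1,1}$. The only difference is organizational — the paper writes the perturbation as a telescoping sum swapping one layer's parameters at a time and uses the Lipschitz constant of the downstream sub-network, whereas you run a forward recursion on the layer-wise discrepancy $\|x_{l}-x_{l}'\|_{\infty}$ — and the two yield the same bound.
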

\begin{proof}
By the definition of dense with respect to the supremum norm, we need to show that for any function $f\in\overline\bsS'_{(L,p,{\bs{s}})}$, there is a sequence of functions $f_{i}\in\bsS'_{(L,p,{\bs{s}})}$, $i\in\N$ such that $$\lim_{i\rightarrow+\infty}\|f-f_{i}\|_{\infty}=0.$$ The idea is inspired by the proof of Lemma~5 of \cite{Schmidt2020}. Recall for any $f\in\overline\bsS_{(L,p)}$, it can be written as 
\[
f({\bm{w}})=M_{L}\circ\sigma\circ M_{L-1}\circ\cdots\circ\sigma\circ M_{0}({\bs{w}}),\mbox{\quad for all\ }{\bs{w}}\in\cro{0,1}^{d}.
\]
For $l\in\{0,\ldots,L+1\}$, we define $p_{l}=p$ for $l\in\{1,\ldots,L\}$, $p_{0}=d$ and $p_{L+1}=1$. For $l\in\{1,\ldots,L\}$, we define the function $f_{l}^{+}:\cro{0,1}^{d}\rightarrow\R^{p}$,
\[
f_{l}^{+}({\bs{w}})=\sigma\circ M_{l-1}\circ\cdots\circ\sigma\circ M_{0}({\bs{w}})
\]
and for $l\in\{1,\ldots,L+1\}$, we define $f_{l}^{-}:\R^{p_{l-1}}\rightarrow\R$
\[
f_{l}^{-}({\bs{x}})=M_{L}\circ\sigma\circ\cdots\circ\sigma\circ M_{l-1}({\bs{x}}).
\]
We set the notations $f_{0}^{+}({\bs{w}})=f_{L+2}^{-}({\bs{w}})={\bs{w}}$. Given a vector ${\bs{v}}=(v_{1},\ldots,v_{p})$ of any size $p\in\N^{*}$, we denote $|{\bs{v}}|_{\infty}=\max_{i=1,\ldots,p}|v_{i}|$. 

For any $f\in\overline\bsS'_{(L,p,{\bs{s}})}$, with the fact that the absolute values of all the parameters are bounded by 1 and ${\bs{w}}\in\cro{0,1}^{d}$, we have for all $l\in\{1,\ldots,L\}$
\begin{equation*}
\left|f_{l}^{+}({\bs{w}})\right|_{\infty}\leq\prod_{k=0}^{l-1}(p_{k}+1)
\end{equation*}
and $f_{l}^{-}$, $l\in\{1,\ldots,L+1\}$, is a multivariate Lipschitz function with Lipschitz constant bounded by $\prod_{k=l-1}^{L}p_{k}$.

For any $f\in\overline\bsS'_{(L,p,{\bs{s}})}$ with weight matrices and shift vectors $\{M_{l}=(A_{l},b_{l})\}_{l=0}^{L}$ and for all $\epsilon>0$, since $\Q$ is dense in $\R$, there exist a $N_{\epsilon}>0$ such that for all $i\geq N_{\epsilon}$, all the non-zero parameters in $f_{i}\in\bsS'_{(L,p,{\bs{s}})}$ are smaller than $\epsilon/(L+1)\cro{\prod_{k=0}^{L+1}(p_{k}+1)}$ away from the corresponding ones in $f$. We denote the weight matrices and shift vectors of function $f_{i}$ as $\{M_{l}^{i}=(A_{l}^{i},b_{l}^{i})\}_{l=0}^{L}$. We note that $$f_{i}({\bs{w}})=f_{i,2}^{-}\circ\sigma\circ M^{i}_{0}\circ f_{0}^{+}({\bs{w}})$$ and $$f({\bs{w}})=f_{i,L+2}^{-}\circ M_{L}\circ f_{L}^{+}({\bs{w}}).$$ Therefore, for all $i\geq N_{\epsilon}$ and all ${\bs{w}}\in\cro{0,1}^{d}$
\begin{align*}
\left|f_{i}({\bs{w}})-f({\bs{w}})\right|\leq&\sum_{l=1}^{L}\left|f_{i,l+1}^{-}\circ\sigma\circ M^{i}_{l-1}\circ f_{l-1}^{+}({\bs{w}})-f_{i,l+1}^{-}\circ\sigma\circ M_{l-1}\circ f_{l-1}^{+}({\bs{w}})\right|\\
&+\left|M_{L}^{i}\circ f_{L}^{+}({\bs{w}})-M_{L}\circ f_{L}^{+}({\bs{w}})\right|\\
\leq&\sum_{l=1}^{L}\left(\prod_{k=l}^{L}p_{k}\right)\left|M^{i}_{l-1}\circ f_{l-1}^{+}({\bs{w}})-M_{l-1}\circ f_{l-1}^{+}({\bs{w}})\right|_{\infty}\\
&+\left|M_{L}^{i}\circ f_{L}^{+}({\bs{w}})-M_{L}\circ f_{L}^{+}({\bs{w}})\right|\\
\leq&\sum_{l=1}^{L+1}\left(\prod_{k=l}^{L+1}p_{k}\right)\left|M^{i}_{l-1}\circ f_{l-1}^{+}({\bs{w}})-M_{l-1}\circ f_{l-1}^{+}({\bs{w}})\right|_{\infty}\\
\leq&\sum_{l=1}^{L+1}\left(\prod_{k=l}^{L+1}p_{k}\right)\cro{\left|\left(A^{i}_{l-1}-A_{l-1}\right)\circ f_{l-1}^{+}({\bs{w}})\right|_{\infty}+|b^{i}_{l-1}-b_{l-1}|_{\infty}}\\
<&\frac{\epsilon}{(L+1)\cro{\prod_{k=0}^{L+1}(p_{k}+1)}}\sum_{l=1}^{L+1}\left(\prod_{k=l}^{L+1}p_{k}\right)\left(p_{l-1}\left|f_{l-1}^{+}({\bs{w}})\right|_{\infty}+1\right)\\
<&\epsilon.
\end{align*}
Hence, by the definition we can conclude that $\bsS'_{(L,p,{\bs{s}})}$ is dense in $\overline\bsS'_{(L,p,{\bs{s}})}$ with respect to the supremum norm $\|\cdot\|_{\infty}$.
\end{proof}

Then, we borrow the approximation result, more precisely (25) and (26), in the proof of Theorem~1 of \cite{Schmidt2020}. For all $k\in\N^{*}$, $K\geq0$, ${\bf{d}}\in(\N^{*})^{k+1}$, ${\bf{t}}\in(\N^{*})^{k+1}$ with $t_{j}\leq d_{j}$ for $j\in\{0,\ldots,k\}$, ${\bm{\alpha}}\in(\R_{+}^{*})^{k+1}$ and all $\bsg\et\in\cF_{\cro{v_{-},v_{+}}}(k,{\bf{d}},{\bf{t}},{\bm{\alpha}},K)$, there exists a sparse neural network which can be embedded into $\overline\bsS'_{(L,p,{\bm{s}})}$, for sufficiently large $n$, satisfying
\begin{itemize}
\item[(i)] $\sum_{i=0}^{k}\log_{2}(4t_{i}+4\alpha_{i})\log_{2}n\leq L\lesssim n\phi_{n}$,
\item[(ii)]  $n\phi_{n}\lesssim p$,
\item[(iii)] $\|{\bs{s}}\|_{0}\asymp n\phi_{n}\log n$,
\end{itemize}
such that
\begin{equation*}
\inf_{\overline\bsg\in\overline\bsS'_{(L,p,{\bm{s}})}}\|\bsg\et-\overline\bsg\|^{2}_{\infty}\leq C_{k,{\bf{d}},{\bf{t}},{\bm{\alpha}},K}\max_{i=0,\ldots,k}n^{-\frac{2\alpha'_{i}}{2\alpha'_{i}+t_{i}}},
\end{equation*}
where $C_{k,{\bf{d}},{\bf{t}},{\bm{\alpha}},K}$ is a numerical constant depending only on $k$, ${\bf{d}}$, ${\bf{t}}$, ${\bm{\alpha}}$ and $K$. Moreover, with the fact that $\bsg\et$ taking values in $\cro{v_{-},v_{+}}$ and Lemma~\ref{dense-neural}, we have
\begin{align}
\inf_{\overline\bsg\in\gGamma_{(L,p,{\bm{s}})}}\|\bsg\et-\overline\bsg\|^{2}_{\infty}&\leq\inf_{\overline\bsg\in\bsS_{(L,p,{\bm{s}})}}\|\bsg\et-\overline\bsg\|^{2}_{\infty}\leq\inf_{\overline\bsg\in\bsS'_{(L,p,{\bm{s}})}}\|\bsg\et-\overline\bsg\|^{2}_{\infty}\nonumber\\
&\leq\inf_{\overline\bsg\in\overline\bsS'_{(L,p,{\bm{s}})}}\|\bsg\et-\overline\bsg\|^{2}_{\infty}\nonumber\\
&\leq C_{k,{\bf{d}},{\bf{t}},{\bm{\alpha}},K}\max_{i=0,\ldots,k}n^{-\frac{2\alpha'_{i}}{2\alpha'_{i}+t_{i}}}.\label{appro-composite-holder}
\end{align}

Let $C_{k,{\bf{t}},{\bm{\alpha}}}$ and $C'_{k,{\bf{d}},{\bf{t}},{\bm{\alpha}}}$ be two numerical constants depending only on their subscripts. We choose 
$$L=C_{k,{\bf{t}},{\bm{\alpha}}}\log_{2}n\mbox{\quad and\quad}p=C'_{k,{\bf{d}},{\bf{t}},{\bm{\alpha}}}n\phi_{n},$$ which satisfy the conditions (i) and (ii) for $n$ large enough. It follows by Proposition~\ref{vc-neural} and the definition of $\bsG_{(L,p,{\bm{s}})}$ that for $n$ sufficiently large, the VC dimension $V_{(L,p,{\bm{s}})}$ of $\bsG_{(L,p,{\bm{s}})}$ satisfies
\begin{align}
V_{(L,p,{\bm{s}})}&\leq C_{k,{\bf{d}},{\bf{t}},{\bm{\alpha}}}n\phi_{n}(\log n)^{2}\log\cro{(L+1)\left(\frac{pL}{2}+1\right)}\nonumber\\
&\leq C_{k,{\bm{d}},{\bm{t}},{\bm{\alpha}}}n\phi_{n}\left(\log n\right)^{3}.\label{vc-composite-bound}
\end{align}
Moreover, with our choices of $L,p,{\bm{s}}$ and $n$ large enough, 
\begin{align}
\Delta(L,p,{\bm{s}})&\leq \|{\bs{s}}\|_{0}\log\left(2e\overline p\right)+p+L\nonumber\\
&\leq C_{k,{\bm{d}},{\bm{t}},{\bm{\alpha}}}\left(n\phi_{n}\log n\log\overline p+n\phi_{n}+\log_{2}n\right)\nonumber\\
&\leq C_{k,{\bm{d}},{\bm{t}},{\bm{\alpha}}}n\phi_{n}(\log n)^{2}.\label{weight-composite}
\end{align}
Under Assumption~\ref{model-parametrize}, applying \eref{iid-bound} together with \eref{appro-composite-holder}, \eref{vc-composite-bound} and \eref{weight-composite}, whatever the distribution of $W$, we derive that for all $k\in\N^{*}$, $K\geq0$, ${\bf{d}}\in(\N^{*})^{k+1}$, ${\bf{t}}\in(\N^{*})^{k+1}$ with $t_{j}\leq d_{j}$ for $j\in\{0,\ldots,k\}$ and ${\bm{\alpha}}\in(\R_{+}^{*})^{k+1}$, any $\bsg\et\in\cF_{\cro{v_{-},v_{+}}}(k,{\bf{d}},{\bf{t}},{\bm{\alpha}},K)$, with a sufficiently large $n$
\begin{align*}
\E\cro{h^{2}\pa{R_{\bsg\et},R_{\widehat \bsg}}}&\leq C_{\kappa}\cro{\inf_{\overline\bsg\in\gGamma_{(L,p,{\bm{s}})}}\|\bsg\et-\overline\bsg\|_{2,P_{W}}^{2}+\frac{\Delta(L,p,{\bm{s}})}{n}+\frac{V_{(L,p,{\bm{s}})}}{n}\log n}\\
&\leq C_{\kappa}\cro{\inf_{\overline\bsg\in\gGamma_{(L,p,{\bm{s}})}}\|\bsg\et-\overline\bsg\|_{\infty}^{2}+\frac{\Delta(L,p,{\bm{s}})}{n}+\frac{V_{(L,p,{\bm{s}})}}{n}\log n}\\
&\leq C_{\kappa,k,{\bf{d}},{\bf{t}},{\bm{\alpha}},K}\phi_{n}\cro{1+\left(\log n\right)^{2}+\left(\log n\right)^{4}}\\
&\leq C_{\kappa,k,{\bf{d}},{\bf{t}},{\bm{\alpha}},K}\phi_{n}\left(\log n\right)^{4}.
\end{align*}
We complete the proof by taking the supremum over $\cF_{\cro{v_{-},v_{+}}}(k,{\bf{d}},{\bf{t}},{\bm{\alpha}},K)$.

\subsubsection*{\bf{A.7 Proof of Corollary~\ref{complete-sele}}}
\begin{proof}
We note that the collection of models $\left\{\bsG_{m},\;m\in\cM\right\}$ satisfies Assumption~\ref{model-VC} with $V_{m}=|m|+1$. By Lemma~\ref{complete-weight}, the associated weights $\Delta(m)$ satisfy inequality \eref{def-weight} with $\Sigma\leq1+\pi^{2}/6$. Moreover, for each $m\in\cM$, the countable subset $\gGamma_{m}$ is dense in $\bsG_{m}$ for the topology of pointwise convergence so that $h(R\et,\sQ_{m})=h(R\et,\overline\sQ_{m})$. 

We apply \eref{iid-bound} and derive that whatever the distribution of $W$, the resulted estimator $R_{\widehat\bsg}$ satisfies
\begin{equation}\label{com-b}
\E\cro{h^{2}(R_{\bsg\et},R_{\widehat \bsg})}\leq c_{2}(c_{3}+\Sigma)(\cB_{o}\wedge\cB_{c}),
\end{equation}
where $$\cB_{o}=\inf_{m\in\cM_{o}}\cro{h^{2}(R_{\bsg\et},\overline\sQ_{m})+\frac{2\log(1+|m|)}{n}+\frac{|m|+1}{n}\cro{1+\log_{+}\left(\frac{n}{|m|+1}\right)}},$$ $$\cB_{c}=\inf_{m\in\cM\backslash\cM_{o}}\cro{h^{2}(R_{\bsg\et},\overline\sQ_{m})+\frac{|m|}{n}\log\left(\frac{2ep}{|m|}\right)+\frac{|m|+1}{n}\cro{1+\log_{+}\left(\frac{n}{|m|+1}\right)}}.$$
For $\cB_{o}$, we observe that 
\begin{align}
\cB_{o}&\leq\inf_{m\in\cM_{o}}\cro{h^{2}(R_{\bsg\et},\overline\sQ_{m})+\frac{|m|+1}{n}+\frac{|m|+1}{n}\cro{1+\log_{+}\left(\frac{n}{|m|+1}\right)}}\nonumber\\
&\leq2\inf_{m\in\cM_{o}}\cro{h^{2}(R_{\bsg\et},\overline\sQ_{m})+\frac{|m|+1}{n}\cro{1+\log_{+}\left(\frac{n}{|m|+1}\right)}}=2\sB_{o}.\label{o-1}
\end{align}
We also note that function $f(x)=x\log\left(2ep/x\right)$ is increasing on $(0,2p]$. Therefore, for $\cB_{c}$ we have
\begin{align}
\cB_{c}&\leq\inf_{m\in\cM\backslash\cM_{o}}\cro{h^{2}(R_{\bsg\et},\overline\sQ_{m})+\frac{|m|+1}{n}\cro{1+\log\left(\frac{2ep}{|m|+1}\right)+\log_{+}\left(\frac{n}{|m|+1}\right)}}\nonumber\\
&\leq2\inf_{m\in\cM\backslash\cM_{o}}\cro{h^{2}(R_{\bsg\et},\overline\sQ_{m})+\frac{|m|+1}{n}\cro{1+\log\left(\frac{(2p)\vee n}{|m|+1}\right)}}.\label{c-1}
\end{align}
Moreover, we note that for any $m\in\cM_{o}$,
\begin{equation}\label{c-2}
\log_{+}\left(\frac{n}{|m|+1}\right)\leq\log\left(\frac{(2p)\vee n}{|m|+1}\right).
\end{equation}
Combining \eref{com-b}, \eref{o-1}, \eref{c-1} and \eref{c-2}, we have
\begin{align*}
\E\cro{h^{2}(R_{\bsg\et},R_{\widehat \bsg})}\leq 2c_{2}(c_{3}+\Sigma)(\sB_{o}\wedge\sB_{c}),
\end{align*}
which concludes the proof.
\end{proof}

\section{Proofs of lemmas}
\subsubsection*{\bf{B.1 Proof of Lemma~\ref{besov-uniformly}}}
\begin{proof}
Let us first prove $\bsS_{({\bs{s}},r)}^{\cB,d}$ is dense in $\overline\bsS_{({\bs{s}},r)}^{\cB,d}$ with respect to the supremum norm. By the definition of dense with respect to the supremum norm, it is enough to show for any $\bsg\in\overline\bsS_{({\bs{s}},r)}^{\cB,d}$, there exists a sequence of functions $\bsg_{l}\in\bsS_{({\bs{s}},r)}^{\cB,d}$, $l\in\N$ such that $\lim_{l\rightarrow+\infty}\|\bsg_{l}-\bsg\|_{\infty}=0$.

For any ${\bs{w}}\in\cro{0,1}^{d}$, there is a vector $(k_{1},\ldots,k_{d})\in\Psi(s_{1})\times\cdots\times\Psi(s_{d})$ such that ${\bs{w}}\in\prod_{j=1}^{d}I_{j}(k_{j})$. Without loss of generality, we only need to show for any function $\widetilde\bsg$ on $\prod_{j=1}^{d}I_{j}(k_{j})$ of the form 
\begin{equation}\label{restricted-bsg}
\widetilde\bsg({\bs{w}})=\sum_{(r_{1},\ldots,r_{d})\in\{0,\ldots,r\}^{d}}\widetilde\gamma_{(r_{1},\ldots,r_{d})}\prod_{j=1}^{d}{w_{j}}^{r_{j}},
\end{equation}
where $\widetilde\gamma_{(r_{1},\ldots,r_{d})}\in\R$, for all $0\leq r_{j}\leq r$, $1\leq j\leq d$, there is a sequence of functions $\{\widetilde\bsg_{l}\}_{l\in\N}$ on $\prod_{j=1}^{d}I_{j}(k_{j})$ of the form
\begin{equation}\label{restricted-bsg-l}
\widetilde\bsg_{l}({\bs{w}})=\sum_{(r_{1},\ldots,r_{d})\in\{0,\ldots,r\}^{d}}\widetilde\gamma^{l}_{(r_{1},\ldots,r_{d})}\prod_{j=1}^{d}{w_{j}}^{r_{j}},
\end{equation}
with $\widetilde\gamma^{l}_{(r_{1},\ldots,r_{d})}\in\Q$, for all $0\leq r_{j}\leq r$, $1\leq j\leq d$ and $l\in\N$ such that $\lim_{l\rightarrow+\infty}\sup_{{\bs{w}}\in\prod_{j=1}^{d}I_{j}(k_{j})}|\widetilde\bsg_{l}({\bs{w}})-\widetilde\bsg({\bs{w}})|=0$. 

In fact, since $\Q$ is dense in $\R$, for all $\widetilde\gamma_{(r_{1},\ldots,r_{d})}\in\R$ with $(r_{1},\ldots,r_{d})\in\{0,\ldots,r\}^{d}$ and all $\epsilon>0$, there is a sequence of rational numbers $\widetilde\gamma^{l}_{(r_{1},\ldots,r_{d})}\in\Q$ and a $N_{\epsilon}>0$ such that for all $l\geq N_{\epsilon}$, $$\left|\widetilde\gamma_{(r_{1},\ldots,r_{d})}-\widetilde\gamma^{l}_{(r_{1},\ldots,r_{d})}\right|<\frac{\epsilon}{(r+1)^{d}}.$$ Hence, for any $\widetilde\bsg$ defined by \eref{restricted-bsg} and all $\epsilon>0$, there exists a $N_{\epsilon}>0$ and a sequence of functions $\{\widetilde\bsg_{l}\}_{l\in\N}$ defined by \eref{restricted-bsg-l} such that for all $l\geq N_{\epsilon}$,
\begin{align*}
\sup_{{\bs{w}}\in\prod_{j=1}^{d}I_{j}(k_{j})}\left|\widetilde\bsg_{l}({\bs{w}})-\widetilde\bsg({\bs{w}})\right|&\leq\left|\sum_{(r_{1},\ldots,r_{d})\in\{0,\ldots,r\}^{d}}\left(\widetilde\gamma_{(r_{1},\ldots,r_{d})}-\widetilde\gamma^{l}_{(r_{1},\ldots,r_{d})}\right)\right|\\
&\leq\sum_{(r_{1},\ldots,r_{d})\in\{0,\ldots,r\}^{d}}\left|\widetilde\gamma_{(r_{1},\ldots,r_{d})}-\widetilde\gamma^{l}_{(r_{1},\ldots,r_{d})}\right|\\
&<\sum_{(r_{1},\ldots,r_{d})\in\{0,\ldots,r\}^{d}}\frac{\epsilon}{(r+1)^{d}}\leq\epsilon.
\end{align*}
The conclusion then follows by the definition of limit.

To prove that $\gGamma_{({\bs{s}},r)}^{\cB,d}$ is dense in $\bsG_{({\bs{s}},r)}^{\cB,d}$ with respect to the supremum norm, it is enough to note that for any $f\in\bsS_{({\bs{s}},r)}^{\cB,d}$ and $g\in\overline\bsS_{({\bs{s}},r)}^{\cB,d}$
\begin{align*}
\|(f\vee v_{-})\wedge v_{+}-(g\vee v_{-})\wedge v_{+}\|_{\infty}\leq\|f-g\|_{\infty}.
\end{align*}
\end{proof}

\subsubsection*{\bf{B.2 Proof of Lemma~\ref{anisotropic-weight}}}
\begin{proof}
For any $D\in\N^{*}$, let $M^{d}_{D}$ stand for the set of partitions which divide $\cro{0,1}^{d}$ into $D$ hyperrectangles. Since $\cup_{{\bs{s}}\in\N^{d}}M^{\cB,d}_{\bs{s}}\subset\cup_{D\in\N^{*}}M^{d}_{D}$, we have
\begin{equation}
\sum_{({\bs{s}},r)\in\cM}\exp\cro{-\log(8d)\prod_{j=1}^{d}2^{s_{j}}-r}\leq\sum_{r\in\N}\sum_{D\in\N^{*}}\sum_{\pi\in M^{d}_{D}}e^{-\log(8d)|\pi|-r},\label{partitiontrans}
\end{equation}
where $|\pi|$ denotes the cardinality of hyperrectangles given by the partition $\pi$ of $\cro{0,1}^{d}$. 

By the proof of Proposition~5 in \cite{akakpo2012adaptation}, a partition over $\cro{0,1}^{d}$ into $D$ hyperrectangles addresses to choosing a vector $\left(l_{1},\ldots,l_{D-1}\right)\in\{1,\ldots,d\}^{D-1}$ for the partition directions and growing a binary tree with root $\cro{0,1}^{d}$ and $D$ leaves. The number of partitions belonging to $M^{d}_{D}$ satisfies $|M^{d}_{D}|\leq\left(4d\right)^{D}$. Therefore, we derive from \eref{partitiontrans} that 

\begin{align*}
\sum_{({\bs{s}},r)\in\cM}\exp\cro{-\log(8d)\prod_{j=1}^{d}2^{s_{j}}-r}&\leq\sum_{r\in\N}e^{-r}\left(\sum_{D\in\N^{*}}\left(4d\right)^{D}\left(8d\right)^{-D}\right)\\
&\leq\sum_{r\in\N}e^{-r}=\frac{e}{e-1}.
\end{align*}
\end{proof}

\subsubsection*{\bf{B.3 Proof of Lemma~\ref{add-inequality}}}
\begin{proof}
It is equivalent to prove $$\sum_{({\bs{s}},t,r)\in \N^{d}\times\N^{*}\times\N}e^{-\Delta({\bs{s}},t,r)}\leq\frac{e}{e-1},$$ where $$\Delta({\bs{s}},t,r)=3\log2\left(\sum_{j=1}^{d}2^{s_{j}}\right)+r+t.$$

For $(D_{1},\ldots,D_{d})\in(\N^{*})^{d}$, let $M^{1}_{D_{j}}$ represent the set of partitions which divide $\cro{0,1}$ into $D_{j}$ subintervals. Recall that $M_{s}^{\cB,1}$ denotes the dyadic partition of $\cro{0,1}$ into $2^{s}$ subintervals, hence we have for any $j\in\{1,\ldots,d\}$, $\cup_{s_{j}\in\N}M_{s_{j}}^{\cB,1}\subset\cup_{D_{j}\in\N^{*}}M^{1}_{D_{j}}$. As an immediate consequence, 
\begin{align}
&\sum_{({\bs{s}},t,r)\in \N^{d}\times\N^{*}\times\N}e^{-\Delta({\bs{s}},t,r)}\nonumber\\&=\sum_{({\bs{s}},t,r)\in \N^{d}\times\N^{*}\times\N}\exp\cro{-t-\sum_{j=1}^{d}2^{s_{j}}\log8-r}\nonumber\\
&\leq\sum_{r\in\N}e^{-r}\cro{\prod_{j=1}^{d}\left(\sum_{D_{j}\in\N^{*}}\sum_{\pi_{j}\in M^{1}_{D_{j}}}e^{-|\pi_{j}|\log8}\right)}\left(\sum_{t\in\N^{*}}e^{-t}\right),\label{add-partitiontrans}
\end{align}
where $|\pi_{j}|$ denotes the cardinality of segments given by the partition $\pi_{j}$. Moreover, as we have mentioned in the proof of Lemma~\ref{anisotropic-weight}, it follows from Proposition~5 in \cite{akakpo2012adaptation} that $|M^{1}_{D_{j}}|\leq4^{D_{j}}$ for $j\in\{1,\ldots,d\}$. Therefore, we derive from \eref{add-partitiontrans} that
\begin{align*}
\sum_{({\bs{s}},t,r)\in \N^{d}\times\N^{*}\times\N}e^{-\Delta({\bs{s}},t,r)}&\leq\sum_{r\in\N}e^{-r}\left(\sum_{D\in\N^{*}}4^{D}8^{-D}\right)^{d}\left(\sum_{t\in\N^{*}}e^{-t}\right)\\
&\leq\sum_{r\in\N}e^{-r}\leq\frac{e}{e-1}.
\end{align*}
\end{proof}

\subsubsection*{\bf{B.4 Proof of Lemma~\ref{multi-weight-inequality}}}
\begin{proof}
\begin{align*}
\sum_{({\bs{t}},r)\in(\N^{*})^{l}\times\N}e^{-\Delta({\bs{t}},r)}&=\sum_{({\bs{t}},r)\in(\N^{*})^{l}\times\N}\exp\cro{-\sum_{j=1}^{l}t_{j}-r}\\
&\leq\sum_{r\in\N}e^{-r}\left(\sum_{{\bs{t}}\in(\N^{*})^{l}}e^{-\sum_{j=1}^{l}t_{j}}\right)\\
&\leq\sum_{r\in\N}e^{-r}\left(\sum_{t\in\N^{*}}e^{-t}\right)^{l}\\
&\leq\frac{e}{e-1}.
\end{align*}
\end{proof}

\subsubsection*{\bf{B.5 Proof of Lemma~\ref{neural-weight-inequality}}}
We hereby introduce a combinatorial result given by Proposition~2.5 of \cite{MR2319879}: for all integers $|m|$ and $p$ with $1\leq|m|\leq p$,
\begin{equation}\label{combination}
\sum_{k=0}^{|m|}\binom{p}{k}\leq\left(\frac{ep}{|m|}\right)^{|m|}.
\end{equation}
\begin{proof}
First, we note that
\begin{align}
&\sum_{(L,p,{\bm{s}})\in(\N^{*})^{2}\times\{0,1\}^{\overline p}}e^{-\Delta(L,p,{\bm{s}})}\nonumber\\
&=\sum_{L\in\N^{*}}e^{-L}\cro{\sum_{p\in\N^{*}}e^{-p}\left(1+\sum_{{\bm{s}}\in\{0,1\}^{\overline p}\backslash\{0\}^{\overline p}}\exp\cro{-\|{\bs{s}}\|_{0}\log\left(\frac{2e\overline p}{\|{\bs{s}}\|_{0}}\right)}\right)}\nonumber\\
&\leq\sum_{L\in\N^{*}}e^{-L}\left\{\sum_{p\in\N^{*}}e^{-p}\cro{1+\sum_{s=1}^{\overline p}\binom{\overline p}{s}\exp\left(-s\log\left(\frac{2e\overline p}{s}\right)\right)}\right\}.\label{weight-neural-1}
\end{align}
By \eref{combination}, we know for any $1\leq s\leq\overline p$,
\begin{align}
\binom{\overline p}{s}&\leq\sum_{h=0}^{s}\binom{\overline p}{h}\leq\left(\frac{e\overline p}{s}\right)^{s}.\label{weight-neural-2}
\end{align}
Plugging \eref{weight-neural-2} into \eref{weight-neural-1}, we obtain
\begin{align*}
&\sum_{(L,p,{\bm{s}})\in(\N^{*})^{2}\times\left\{0,1\right\}^{\overline p}}e^{-\Delta(L,p,{\bm{s}})}\\
&\leq\sum_{L\in\N^{*}}e^{-L}\left\{\sum_{p\in\N^{*}}e^{-p}\cro{1+\sum_{s=1}^{\overline p}\left(\frac{e\overline p}{s}\right)^{s}\left(\frac{2e\overline p}{s}\right)^{-s}}\right\}\\
&\leq\sum_{L\in\N^{*}}e^{-L}\cro{\sum_{p\in\N^{*}}e^{-p}\left(\sum_{s=0}^{\overline p}2^{-s}\right)}\\
&\leq\sum_{L\in\N^{*}}e^{-L}\cro{\sum_{p\in\N^{*}}e^{-p}\left(\sum_{s=0}^{+\infty}2^{-s}\right)}\leq2.
\end{align*}
\end{proof}

\subsubsection*{\bf{B.6 Proof of Lemma~\ref{complete-weight}}}
\begin{proof}
By (\ref{combination}), we derive that 
\begin{align*}
\Sigma&=\sum_{m\in\cM}e^{-\Delta(m)}=\sum_{m\in\cM_{o}}e^{-\Delta(m)}+\sum_{m\in\cM\backslash\cM_{o}}e^{-\Delta(m)}\\
&\leq\sum_{d=0}^{p}\frac{1}{(1+d)^{2}}+\sum_{|m|=1}^{p}\binom{p}{|m|}\exp\cro{-|m|\log\left(\frac{2ep}{|m|}\right)}\\
&\leq\sum_{k=1}^{+\infty}\frac{1}{k^{2}}+\sum_{|m|=1}^{p}\left(\frac{ep}{|m|}\right)^{|m|}\exp\cro{-|m|\log\left(\frac{2ep}{|m|}\right)}\\
&\leq\frac{\pi^{2}}{6}+\sum_{|m|=1}^{+\infty}2^{-|m|}\\
&\leq\frac{\pi^{2}}{6}+1.
\end{align*}
\end{proof}

\section{Proofs of VC dimensions}
The proofs in this section are inspired by the proof of Theorem 7 in \cite{JMLR:v20:17-612}. We first introduce three results which we shall use later for deriving the VC dimension bounds. The first one is the result 
of Lemma~1 in \cite{10.1162/089976698300017016}. 
\begin{lem}\label{poly-vc}
Suppose $f_{1}(\cdot),f_{2}(\cdot),\ldots,f_{T}(\cdot)$ are fixed polynomials of degree at most $d$ in $s\leq T$ variables. Define $$N:=\left|\left\{\left(\sgnn(f_{1}(a)),\ldots,\sgnn(f_{T}(a))\right),\;a\in\R^{s}\right\}\right|,$$ i.e., $N$ is the number of distinct sign vectors generated by varying $a\in\R^{s}$. Then we have $N\leq2(2edT/s)^{s}$.
\end{lem}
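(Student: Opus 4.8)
The quantity $N$ counts the number of consistent sign conditions realized by $f_1,\ldots,f_T$ on $\R^s$, so the natural engine is a Milnor--Thom/Warren-type bound on the number of connected components of real semialgebraic sets. The plan is to identify each realizable sign vector $\sigma\in\{-1,0,1\}^T$ with the nonempty cell $C_\sigma=\{a\in\R^s:\sgnn(f_i(a))=\sigma_i,\ 1\le i\le T\}$; the $C_\sigma$ are pairwise disjoint and cover $\R^s$, so $N=|\{\sigma:C_\sigma\ne\varnothing\}|$, and since each nonempty cell contains at least one connected component of the arrangement determined by the zero sets $\{f_i=0\}$, the count $N$ is dominated by the total number of such components.

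First I would treat the strict case, where all signs lie in $\{-1,+1\}$, i.e.\ count the connected components of the open set $\R^s\setminus\bigcup_{i=1}^T\{f_i=0\}$. Warren's theorem bounds this number by a sum of binomial terms $\sum_{k}2^k\binom{T}{k}d^k$, which---using $T\ge s$ together with the estimate $\binom{T}{k}\le(eT/k)^k$---collapses, up to an absolute constant, to a quantity of order $\pa{2edT/s}^s$. To pass from strict sign vectors to the three-valued count I would run a standard perturbation argument: any $\sigma$ containing zeros is a limit of strict sign vectors of the perturbed polynomials $f_i\pm\varepsilon$, and bounding how many additional cells the vanishing loci can produce contributes the leading factor $2$, so that after collecting constants one recovers the stated $2\pa{2edT/s}^s$.

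The genuinely delicate part is the constant bookkeeping rather than the conceptual skeleton: obtaining exactly $2\pa{2edT/s}^s$ requires Warren's precise component count fused with a careful treatment of the zero-sign cells, and it leans essentially on the hypothesis $T\ge s$, without which the binomial-coefficient estimate does not telescope into the clean $\pa{eT/s}^s$ form. Since the inequality is precisely Lemma~1 of \cite{10.1162/089976698300017016}, in the paper I would simply cite that reference; the sketch above merely records how one reconstructs it from Warren's theorem on connected components of real algebraic sets.
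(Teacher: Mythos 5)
The paper gives no proof of this lemma at all: it is stated verbatim as Lemma~1 of \cite{10.1162/089976698300017016} and simply cited, which is exactly what you propose to do. Your accompanying sketch (identifying sign vectors with cells of the arrangement, Warren's component bound combined with $\binom{T}{k}\leq(eT/k)^{k}$ under $T\geq s$, and a perturbation step for the zero-sign cells) is the standard derivation used in that reference, so the two approaches coincide.
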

The second Lemma is the weighted AM-GM Inequality. 
\begin{lem}[Weighted AM-GM Inequality]\label{weighted-am-gm}
If $0\leq c_{i}\in\R$ and $0\leq\lambda_{i}\in\R$ for all $i=1,\ldots,K$ such that $\sum_{i=1}^{K}\lambda_{i}=1$, then
$$\prod_{i=1}^{K}c_{i}^{\lambda_{i}}\leq\sum_{i=1}^{K}\lambda_{i}c_{i}.$$ 
\end{lem}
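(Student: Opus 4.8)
The plan is to deduce the weighted AM--GM inequality from the elementary estimate $\log t \le t-1$, valid for all $t>0$, which is itself an immediate consequence of the concavity of the logarithm (the tangent line to $\log$ at $t=1$ lies above its graph). This has the advantage of handling arbitrary $K$ in a single stroke, rather than proving the two-term case and inducting.

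First I would dispose of the degenerate cases, since this is where the only care is needed. Any index $i$ with $\lambda_i=0$ contributes a factor $c_i^{0}=1$ to the product and nothing to the sum, so such indices may be discarded; we may therefore assume $\lambda_i>0$ for every $i$. Set $A=\sum_{i=1}^{K}\lambda_i c_i$. If $A=0$ then, all $\lambda_i$ being positive, every $c_i=0$, whence $\prod_{i=1}^K c_i^{\lambda_i}=0$ as well and the inequality holds with equality. It thus remains to treat $A>0$; if moreover some $c_i=0$ the left-hand side vanishes and the inequality is trivial, so we may additionally assume $c_i>0$ for all $i$.

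With all $c_i>0$ and $A>0$, dividing through by $A$ reduces the claim to $\prod_{i=1}^K (c_i/A)^{\lambda_i}\le 1$, equivalently, after taking logarithms, to $\sum_{i=1}^K \lambda_i \log(c_i/A)\le 0$. Applying $\log t\le t-1$ with $t=c_i/A$ and using $\sum_i\lambda_i=1$ gives
$$
\sum_{i=1}^{K}\lambda_i\log\!\left(\frac{c_i}{A}\right)\le \sum_{i=1}^{K}\lambda_i\left(\frac{c_i}{A}-1\right)=\frac{1}{A}\sum_{i=1}^{K}\lambda_i c_i-\sum_{i=1}^{K}\lambda_i=\frac{A}{A}-1=0,
$$
which is exactly the required bound; exponentiating recovers $\prod_i c_i^{\lambda_i}\le A$.

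There is no genuine obstacle here: the only subtlety is the bookkeeping of the boundary conventions ($c_i^{0}=1$ and $0^{\lambda_i}=0$ for $\lambda_i>0$), which is precisely why I isolate the degenerate cases at the outset before running the logarithmic argument on the interior. An alternative route would be to establish the two-term inequality $c_1^{\lambda}c_2^{1-\lambda}\le\lambda c_1+(1-\lambda)c_2$ directly (for instance by checking that $t\mapsto \lambda t+(1-\lambda)-t^{\lambda}$ attains its minimum value $0$ at $t=1$) and then induct on $K$ after renormalising the first $K-1$ weights by $1-\lambda_K$; but the estimate $\log t\le t-1$ avoids the induction entirely and is cleaner.
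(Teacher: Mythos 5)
Your proof is correct and complete: the reduction to strictly positive $\lambda_i$ and $c_i$, the normalisation by $A$, and the application of $\log t\le t-1$ together with $\sum_i\lambda_i=1$ give exactly the weighted AM--GM inequality, and you handle the boundary conventions carefully. The paper itself states this lemma as a classical fact and supplies no proof at all, so there is nothing to compare against; your argument is the standard one and would serve perfectly well if a proof were required.
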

The third result comes from the Lemma~18 of \cite{JMLR:v20:17-612}.
\begin{lem}\label{tech-inequality}
Suppose that $2^{m}\leq2^{t}(mr/w)^{w}$ for some $r\geq16$ and $m\geq w\geq t\geq0$. Then, $m\leq t+w\log_{2}(2r\log_{2}r)$.
\end{lem}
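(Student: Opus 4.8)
The plan is to first take base-two logarithms of the hypothesis $2^{m}\le 2^{t}(mr/w)^{w}$, which turns it into the single scalar inequality $m\le t+w\log_{2}(mr/w)$. From this point the statement is a purely real-analytic claim, and I would discard the exponential formulation entirely. I may also assume $w\ge1$, since $w=0$ forces $m=t=0$ and the conclusion is trivial.

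Set $M_{0}=t+w\log_{2}(2r\log_{2}r)$, the target upper bound, and introduce the auxiliary function $\phi(x)=x-t-w\log_{2}(xr/w)$, so that the hypothesis reads $\phi(m)\le0$ while the goal is $m\le M_{0}$. I would argue by contradiction: suppose $m>M_{0}$. The function $\phi$ is strictly increasing on $[M_{0},+\infty)$, because $\phi'(x)=1-w/(x\ln2)$ is positive as soon as $x>w/\ln2$, and $M_{0}\ge w\log_{2}(2r\log_{2}r)\ge 7w$ (using $r\ge16$, hence $2r\log_{2}r\ge128$) comfortably exceeds $w/\ln2$. Consequently $\phi(m)>\phi(M_{0})$, so it will suffice to verify $\phi(M_{0})\ge0$ to reach the contradiction $0<\phi(m)\le0$.

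The verification $\phi(M_{0})\ge0$ is where the actual content sits. Writing $c=2r\log_{2}r$ and expanding, one finds $\phi(M_{0})=w\log_{2}\!\big(cw/(M_{0}r)\big)$, so that $\phi(M_{0})\ge0$ is equivalent to $cw\ge M_{0}r$. Using $t\le w$ (which follows from $w\ge t$) to bound $M_{0}\le w(1+\log_{2}c)$, it is enough to prove $c\ge(1+\log_{2}c)r$; substituting $c=2r\log_{2}r$ and simplifying collapses this to the clean arithmetic inequality $r/\log_{2}r\ge4$. This final inequality holds for every $r\ge16$: the map $r\mapsto r/\log_{2}r$ is increasing on $[e,+\infty)$ and equals exactly $4$ at $r=16$, so the standing assumption $r\ge16$ is precisely what makes the estimate work (and be tight at the endpoint).

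I expect the main obstacle to be organizing the argument so that the self-referential term $\log_{2}(m/w)$ appearing on the right-hand side is controlled without circularity. The monotonicity-plus-contradiction device handles this cleanly by reducing the whole question to evaluating $\phi$ at the single point $M_{0}$, after which only elementary calculus and algebra remain. The one other point deserving care is confirming that $r\ge16$ is exactly the threshold for $r/\log_{2}r\ge4$, which accounts for the otherwise opaque numerical constant in the hypothesis.
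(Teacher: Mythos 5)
Your proof is correct. Note first that the paper itself does not prove this lemma: it is imported verbatim as Lemma~18 of Bartlett, Harvey, Liaw and Mehrabian (2019), so there is no in-paper argument to compare against. Your argument is a valid self-contained proof, and it differs from the standard one in the cited source: there the self-referential term $\log_{2}(mr/w)$ is handled by the tangent-line (Fenchel) bound $\log_{2}u\le u/(v\ln 2)+\log_{2}v-\log_{2}e$ with a judicious choice of $v$, which linearizes the inequality in $m$ and lets one solve for $m$ directly; you instead use monotonicity of $\phi(x)=x-t-w\log_{2}(xr/w)$ on $[M_{0},+\infty)$ plus a single evaluation $\phi(M_{0})\ge 0$, which is arguably cleaner and makes transparent why $r\ge 16$ is exactly the threshold (it reduces to $r/\log_{2}r\ge 4$, with equality at $r=16$). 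I checked the key computations: $M_{0}\ge 7w>w/\ln 2$ so $\phi$ is indeed strictly increasing past $M_{0}$; the identity $\phi(M_{0})=w\log_{2}\bigl(cw/(M_{0}r)\bigr)$ holds; and the chain $c\ge(1+\log_{2}c)r\iff\log_{2}(r/\log_{2}r)\ge 2$ is right. Two cosmetic points. First, the reduction ``$w=0$ is trivial, hence assume $w\ge 1$'' is a non sequitur if $w$ is a real parameter (as the statement permits); you should say ``assume $w>0$'' — harmlessly, since nowhere in your argument is $w\ge 1$ actually used, only $w>0$ and $t\le w$. Second, in the $w=0$ case the expression $(mr/w)^{w}$ needs the convention that it equals $1$, under which $2^{m}\le 2^{t}=1$ indeed forces $m=t=0$ as you say.
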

\subsubsection*{\bf{C.1 Proof of Proposition~\ref{add-vc-bound}}}\label{proof-add-vc}
\begin{proof}
For a given $r\in\N$, $t\in\N^{*}$ and ${\bs{\pi}}=(\pi_{1},\ldots,\pi_{d})\in(M^{\cB,1})^{d}$, we define $\widetilde\gGamma^{A}_{({\bs{\pi}},t,r)}$ the collection of all the functions on $\sW=\cro{0,1}^{d}$ of the form
\begin{equation*}
\bsg({\bs{w}})=f\cro{(g({\bs{w}})\vee0)\wedge 1},\mbox{\quad for all\ }{\bm{w}}=(w_{1},\ldots,w_{d})\in\cro{0,1}^{d},
\end{equation*}
where $g({\bs{w}})=\sum_{j=1}^{d}g_{j}(w_{j})$ with $g_{j}\in\overline\bsS^{\cB,1}_{(\pi_{j},r)}$, for all $j\in\{1,\ldots,d\}$ and $f\in\overline\bsS^{\cH,1}_{(t,r)}$. The class of functions $\overline\bsS^{\cB,1}_{(\pi_{j},r)}$ has been defined in Section~\ref{aniso-besov} and $\overline\bsS^{\cH,1}_{(t,r)}$ in Section~\ref{composite}. Let $V^{\widetilde A}_{({\bs{\pi}},t,r)}$ denote the VC dimension of $\widetilde\gGamma^{A}_{({\bs{\pi}},t,r)}$. We first prove the conclusion holds for $\widetilde\gGamma^{A}_{({\bs{\pi}},t,r)}$, i.e.
$$V^{\widetilde A}_{({\bs{\pi}},t,r)}\leq 2+\cro{t(r+1)+2\sum_{j=1}^{d}|\pi_{j}|(r+1)}\log_{2}\cro{4eU\log_{2}\left(2eU\right)},$$ where $U=t+r+2$. Then, by rewriting $\bsG^{A}_{({\bs{\pi}},t,r)}=\left\{(\bsg\vee v_{-})\wedge v_{+},\;\bsg\in\widetilde\gGamma^{A}_{({\bs{\pi}},t,r)}\right\}$, the conclusion also holds for $\bsG^{A}_{({\bs{\pi}},t,r)}$ according to the properties of VC-subgraph we introduced in Section~\ref{sect-3}.

Recall that $\overline\bsS^{\cB,1}_{(\pi_{j},r)}$ is a $\left|\pi_{j}\right|(r+1)$ dimensional vector space for any $j\in\{1,\ldots,d\}$ and $\overline\bsS^{\cH,1}_{(t,r)}$ is a $t(r+1)$ dimensional vector space. Therefore, any element belonging to $\widetilde\gGamma^{A}_{({\bs{\pi}},t,r)}$ is determined by a vector of real numbers $a\in\R^{s}$ with $s=\left(t+\sum_{j=1}^{d}|\pi_{j}|\right)(r+1)$ which we call parameters in the sequel. We denote $g_{a}$ the function $g({\bs{w}})=\sum_{j=1}^{d}g_{j}(w_{j})$ and $f_{a}$ the function in $\widetilde\gGamma^{A}_{({\bs{\pi}},t,r)}$ induced by the parameters vector $a\in\R^{s}$ hence we have $\widetilde\gGamma^{A}_{({\bs{\pi}},t,r)}=\left\{f_{a},\;a\in\R^{s}\right\}$. Given a fixed point ${\bs{w}}$ on $\sW$, for any $a\in\R^{s}$, we denote $h_{\bs{w}}(a)=f_{a}({\bs{w}})$ and $h'_{\bs{w}}(a)=g_{a}({\bs{w}})$.

We take $m$ fixed points $({\bm{w}}_{1},v_{1}),\ldots,({\bm{w}}_{m},v_{m})\in\sW\times\R$, where for each $i\in\{1,\ldots,m\}$, ${\bm{w}}_{i}=(w_{i}^{1},\ldots,w_{i}^{d})\in\cro{0,1}^{d}$. We first derive a bound for the total number of signs patterns given fixed $({\bm{w}}_{1},v_{1}),\ldots,({\bm{w}}_{m},v_{m})\in\sW\times\R$, i.e. $$N(m)=\Big|\left\{(\sgnn(h_{{\bm{w}}_{1}}(a)-v_{1}),\ldots,\sgnn(h_{{\bm{w}}_{m}}(a)-v_{m})),\;a\in\R^{s}\right\}\Big|.$$ The idea is to construct a special partition $\cS$ of $\R^{s}$ where within each region $S\in\cS$ the functions $h_{{\bm{w}}_{i}}(a)-v_{i}$, $i\in\{1,\ldots,m\}$ are all fixed polynomials of $a$ with a bounded degree. 

We start with $\cS_{0}=\{\R^{s}\}$. For any $i\in\{1,\ldots,m\}$, we note that $h'_{{\bm{w}}_{i}}(a)$ is a fixed polynomial depending on at most $\sum_{j=1}^{d}|\pi_{j}|(r+1)$ variables with the total degree no more than 1. We recall that for a given $t\in\N^{*}$, $M^{\cH,1}_{t}$ defined in Section~\ref{composite} is the regular partition of $\cro{0,1}$ into $t$ subintervals. Let $\{b_{1},\ldots,b_{t-1}\}$ be the breakpoints on the interval $(0,1)$ given by $M^{\cH,1}_{t}$ and denote $b_{0}=0$, $b_{t}=1$. Applying Lemma~\ref{poly-vc} to the collection of polynomials 
$$\cC=\left\{h'_{{\bm{w}}_{i}}(a)-b_{l}, i\in\{1,\ldots,m\}, l\in\{0,\ldots,t\}\right\},$$
we know that when $a$ varies in $\R^{s}$, it attains at most $$N_{1}:=2\left(\frac{2em(t+1)}{\sum_{j=1}^{d}|\pi_{j}|(r+1)}\right)^{\sum_{j=1}^{d}|\pi_{j}|(r+1)}$$ distinct signs patterns. Therefore, one can partition $\R^{s}$ into $N_{1}$ pieces with the refined partition $\cS_{1}=\{S_{1},\ldots,S_{N_{1}}\}$ such that all the polynomials in $\cC$ have fixed signs within each region $S\in\cS_{1}$. For any $S\in\cS_{1}$ and any $i\in\{1,\ldots,m\}$, when $a$ varies in $S$, $h_{{\bm{w}}_{i}}(a)$ is a fixed polynomial of at most $(t+\sum_{j=1}^{d}|\pi_{j}|)(r+1)$ variables with the total degree no more than $r+1$. Hence by Lemma~\ref{poly-vc} again, on each $S\in\cS_{1}$, $$\left\{(\sgnn(h_{{\bm{w}}_{1}}(a)-v_{1}),\ldots,\sgnn(h_{{\bm{w}}_{m}}(a)-v_{m})),\;a\in S\right\}$$ has at most
$$N_{2}:=2\left(\frac{2em(r+1)}{(t+\sum_{j=1}^{d}|\pi_{j}|)(r+1)}\right)^{(t+\sum_{j=1}^{d}|\pi_{j}|)(r+1)}$$ distinct signs patterns. We intersect all these regions with $S\in\cS_{1}$ which yields a refined partition $\cS_{2}=\{S_{1},\ldots,S_{N_{1}N_{2}}\}$ over $\R^{s}$ with at most $N_{1}N_{2}$ pieces such that within each region $S\in\cS_{2}$, $$\left(\sgnn(h_{{\bm{w}}_{1}}(a)-v_{1}),\ldots,\sgnn(h_{{\bm{w}}_{m}}(a)-v_{m})\right)$$ have unchanged signs patterns when $a$ varies in $S$. We denote $$\lambda_{1}=\frac{\sum_{j=1}^{d}|\pi_{j}|(r+1)}{t(r+1)+2\sum_{j=1}^{d}|\pi_{j}|(r+1)},\quad\lambda_{2}=\frac{(t+\sum_{j=1}^{d}|\pi_{j}|)(r+1)}{t(r+1)+2\sum_{j=1}^{d}|\pi_{j}|(r+1)},$$ $$c_{1}=\frac{2em(t+1)}{\sum_{j=1}^{d}|\pi_{j}|(r+1)},\quad c_{2}=\frac{2em(r+1)}{(t+\sum_{j=1}^{d}|\pi_{j}|)(r+1)}.$$ For any arbitrarily chosen $m$ points $({\bm{w}}_{1},v_{1}),\ldots,({\bm{w}}_{m},v_{m})\in\sW\times\R$, we have
\begin{align}
N(m)&\leq\sum_{k=1}^{N_{1}N_{2}}|\left\{(\sgnn(h_{{\bm{w}}_{1}}(a)-v_{1}),\ldots,\sgnn(h_{{\bm{w}}_{m}}(a)-v_{m})),\;a\in S_{k}\right\}|\nonumber\\&\leq N_{1}N_{2}\leq4\left(c_{1}^{\lambda_{1}}c_{2}^{\lambda_{2}}\right)^{t(r+1)+2\sum_{j=1}^{d}|\pi_{j}|(r+1)}.\label{N-bound-2-add}
\end{align}
Applying Lemma~\ref{weighted-am-gm} to \eref{N-bound-2-add}, we derive that 
\begin{align}
N(m)&\leq N_{1}N_{2}\leq4\left(\lambda_{1}c_{1}+\lambda_{2}c_{2}\right)^{t(r+1)+2\sum_{j=1}^{d}|\pi_{j}|(r+1)}\nonumber\\
&\leq4\left(\frac{2em(t+r+2)}{t(r+1)+2\sum_{j=1}^{d}|\pi_{j}|(r+1)}\right)^{{t(r+1)+2\sum_{j=1}^{d}|\pi_{j}|(r+1)}}.\label{am-gm-2-add}
\end{align}
From the definition of VC-dimension together with \eref{am-gm-2-add},
$$2^{V^{\widetilde A}_{({\bm{\pi}},t,r)}}=N\cro{V^{\widetilde A}_{({\bm{\pi}},t,r)}}\leq4\left(\frac{2e(t+r+2)V^{\widetilde A}_{({\bm{\pi}},t,r)}}{t(r+1)+2\sum_{j=1}^{d}|\pi_{j}|(r+1)}\right)^{t(r+1)+2\sum_{j=1}^{d}|\pi_{j}|(r+1)}.$$
We denote $U=t+r+2$. Since $r\in\N$ and $t\in\N^{*}$, we have $U\geq3$ and $2eU\geq16$. We then can apply Lemma~\ref{tech-inequality} and obtain
\begin{equation*}
V^{\widetilde A}_{({\bm{\pi}},t,r)}\leq2+\cro{t(r+1)+2\sum_{j=1}^{d}|\pi_{j}|(r+1)}\log_{2}\cro{4eU\log_{2}\left(2eU\right)}.
\end{equation*}
The conclusion finally follows by $V^{A}_{({\bm{\pi}},t,r)}\leq V^{\widetilde A}_{({\bm{\pi}},t,r)}.$
\end{proof}

\subsubsection*{\bf{C.2 Proof of Proposition~\ref{multi-vc}}}
\begin{proof}
For a given $r\in\N$ and ${\bs{t}}=(t_{1},\ldots,t_{l})\in(\N^{*})^{l}$, we define $\widetilde\gGamma^{M}_{({\bs{t}},r)}$ the collection of all the functions $\bsg$ on $\cro{0,1}^{d}$ of the form 
\begin{equation}\label{multi-class-tilde}
\bsg({\bs{w}})=f\left(g_{1}({\bs{w}}),\ldots,g_{l}({\bs{w}})\right),\;\mbox{for all\ }{\bs{w}}\in\cro{0,1}^{d}
\end{equation}
where $f\in\overline\bsS^{\cH,l}_{({\bs{t}},r)}$, $g_{j}({\bs{w}})=\cro{\left(\left(\langle a_{j},{\bs{w}}\rangle+1\right)/2\right)\vee0}\wedge1$ with $a_{j}\in\R^{d}$ for all $j\in\{1,\ldots,l\}$.
We denote $V^{\widetilde M}_{({\bs{t}},r)}$ the VC dimension of the class of functions $\widetilde\gGamma^{M}_{({\bs{t}},r)}$. We first prove $$V^{\widetilde M}_{({\bs{t}},r)}\leq 2+\cro{2ld+\left(\prod_{j=1}^{l}t_{j}\right)(r+1)^{l}}\log_{2}\cro{4eU\log_{2}\left(2eU\right)},$$ where $U=\sum_{j=1}^{l}t_{j}+lr+l+1$.

Let us recall that by the definition of $\overline\bsS^{\cH,l}_{({\bs{t}},r)}$ in Section~\ref{composite} and \eref{multi-class-tilde}, any function belonging to $\widetilde\gGamma^{M}_{({\bs{t}},r)}$ is determined by a vector of real numbers $a\in\R^{s}$ with $s=ld+\left(\prod_{j=1}^{l}t_{j}\right)(r+1)^{l}$ which we call parameters in the sequel. We denote $f_{a}$ the function in $\widetilde\gGamma^{M}_{({\bs{t}},r)}$ induced by the parameters vector $a\in\R^{s}$ hence we can rewrite $\widetilde\gGamma^{M}_{({\bs{t}},r)}=\{f_{a},\;a\in\R^{s}\}$. Given a fixed point ${\bs{w}}$ on $\sW$, for any $a\in\R^{s}$, we denote $h_{{\bs{w}}}(a)=f_{a}({\bs{w}})$. 

We start with fixing $m$ points $({\bs{w}}_{1},v_{1}),\ldots,({\bs{w}}_{m},v_{m})\in\sW\times\R$. 
Provided $m$ fixed points $({\bs{w}}_{1},v_{1}),\ldots,({\bs{w}}_{m},v_{m})\in\sW\times\R$, we first bound the total number of signs patterns, i.e.
\begin{equation*}
N(m)=\Big|\left\{(\sgnn(h_{{\bm{w}}_{1}}(a)-v_{1}),\ldots,\sgnn(h_{{\bm{w}}_{m}}(a)-v_{m})),\;a\in\R^{s}\right\}\Big|.
\end{equation*}
The idea is similar to the proof of Proposition~\ref{add-vc-bound} which is to construct a special partition $\cS$ of $\R^{s}$ such that within each region $S\in\cS$, the functions $h_{{\bs{w}}_{i}}(a)-v_{i}$, for $i\in\{1,\ldots,m\}$ are all fixed polynomials of $a$ with a bounded degree. Therefore, we can conclude by applying Lemma~\ref{poly-vc}. 

We initialise our partition of $\R^{s}$ with $\cS_{0}=\{\R^{s}\}$. We recall that for a given ${\bs{t}}=(t_{1},\ldots,t_{l})\in(\N^{*})^{l}$, $M^{\cH,l}_{\bs{t}}$ defined in Section~\ref{composite} is the partition of $\cro{0,1}^{l}$, where in all the directions $j\in\{1,\ldots,l\}$, the interval $\cro{0,1}$ is divided into $t_{j}$ regular subintervals. Let $\{b_{1}^{j},\ldots,b_{t_{j}-1}^{j}\}$ be the breakpoints on the interval $(0,1)$ in the $j$-th direction given by the partition $M^{\cH,l}_{\bs{t}}$ and denote $b_{0}^{j}=0$, $b_{t_{j}}^{j}=1$ for all $j\in\{1,\ldots,l\}$. We consider the collection of polynomials 
\[
\cC=\left\{\frac{1}{2}\left(\langle a_{j},{\bs{w}}_{i}\rangle+1\right)-b_{k}^{j},\;i\in\{1,\ldots,m\},\;j\in\{1,\ldots,l\},\;k\in\{0,\ldots,t_{j}\}\right\}.
\]
Since all the functions in $\cC$ can be written as a fixed polynomial of degree no more than 1 in $ld$ variables of $a$, $\cC$ attains at most
$$N_{1}:=2\left(\frac{2em\sum_{j=1}^{l}(t_{j}+1)}{ld}\right)^{ld}$$ distinct signs patterns when $a$ varies in $\R^{s}$ according to Lemma~\ref{poly-vc}. Therefore, we partition $\R^{s}$ into $N_{1}$ pieces with the refined partition $\cS_{1}=\{S_{1},\ldots,S_{N_{1}}\}$ such that within each region $S\in\cS_{1}$, all the polynomials in $\cC$ have fixed signs when $a$ varies in $S$. Now we consider on each $S\in\cS_{1}$, for any $i\in\{1,\ldots,m\}$, $h_{{\bs{w}}_{i}}(a)$ with $a\in S$ is a fixed polynomial of at most $ld+\left(\prod_{j=1}^{l}t_{j}\right)(r+1)^{l}$ variables with the total degree no more than $lr+1$. Hence by Lemma~\ref{poly-vc} again, on each $S\in\cS_{1}$, $$\left\{(\sgnn(h_{{\bm{w}}_{1}}(a)-v_{1}),\ldots,\sgnn(h_{{\bm{w}}_{m}}(a)-v_{m})),\;a\in S\right\}$$ attains at most
$$N_{2}:=2\left(\frac{2em(lr+1)}{ld+\left(\prod_{j=1}^{l}t_{j}\right)(r+1)^{l}}\right)^{ld+\left(\prod_{j=1}^{l}t_{j}\right)(r+1)^{l}}$$ distinct signs patterns when $a$ varies in $S$. We intersect all these regions with $S\in\cS_{1}$ which yields a refined partition $\cS_{2}=\{S_{1},\ldots,S_{N_{1}N_{2}}\}$ of $\R^{s}$ with at most $N_{1}N_{2}$ pieces such that within each region, $(\sgnn(h_{{\bm{w}}_{1}}(a)-v_{1}),\ldots,\sgnn(h_{{\bm{w}}_{m}}(a)-v_{m}))$ have unchanged signs patterns when $a$ varies. We denote $$\lambda_{1}=\frac{ld}{2ld+\left(\prod_{j=1}^{l}t_{j}\right)(r+1)^{l}},\quad\lambda_{2}=\frac{ld+\left(\prod_{j=1}^{l}t_{j}\right)(r+1)^{l}}{2ld+\left(\prod_{j=1}^{l}t_{j}\right)(r+1)^{l}},$$ $$c_{1}=\frac{2em\sum_{j=1}^{l}(t_{j}+1)}{ld},\quad c_{2}=\frac{2em(lr+1)}{ld+\left(\prod_{j=1}^{l}t_{j}\right)(r+1)^{l}}.$$ For any arbitrarily chosen $m$ points $({\bm{w}}_{1},v_{1}),\ldots,({\bm{w}}_{m},v_{m})\in\sW\times\R$, we have
\begin{align}
N(m)&\leq\sum_{k=1}^{N_{1}N_{2}}|\left\{(\sgnn(h_{{\bm{w}}_{1}}(a)-v_{1}),\ldots,\sgnn(h_{{\bm{w}}_{m}}(a)-v_{m})),\;a\in S_{k}\right\}|\nonumber\\&\leq N_{1}N_{2}\leq4\left(c_{1}^{\lambda_{1}}c_{2}^{\lambda_{2}}\right)^{2ld+\left(\prod_{j=1}^{l}t_{j}\right)(r+1)^{l}}.\label{N-bound-2}
\end{align}
Applying Lemma~\ref{weighted-am-gm} to \eref{N-bound-2}, we derive that 
\begin{align}
N(m)&\leq N_{1}N_{2}\leq4\left(\lambda_{1}c_{1}+\lambda_{2}c_{2}\right)^{2ld+\left(\prod_{j=1}^{l}t_{j}\right)(r+1)^{l}}\nonumber\\
&\leq4\cro{\frac{2em\left(\sum_{j=1}^{l}t_{j}+lr+l+1\right)}{2ld+\left(\prod_{j=1}^{l}t_{j}\right)(r+1)^{l}}}^{2ld+\left(\prod_{j=1}^{l}t_{j}\right)(r+1)^{l}}.\label{am-gm-2-multi}
\end{align}
From the definition of VC-dimension together with \eref{am-gm-2-multi},
$$2^{V^{\widetilde M}_{({\bm{t}},r)}}=N\cro{V^{\widetilde M}_{({\bm{t}},r)}}\leq4\cro{\frac{2e(\sum_{j=1}^{l}t_{j}+lr+l+1)V^{\widetilde M}_{({\bm{t}},r)}}{2ld+\left(\prod_{j=1}^{l}t_{j}\right)(r+1)^{l}}}^{2ld+\left(\prod_{j=1}^{l}t_{j}\right)(r+1)^{l}}.$$
We denote $U=\sum_{j=1}^{l}t_{j}+lr+l+1$. Since $r\in\N$ and ${\bs{t}}\in(\N^{*})^{l}$ with $l\in\N^{*}$, we have $U\geq3$ and $2eU\geq16$. We then can apply Lemma~\ref{tech-inequality} and obtain
\begin{equation*}
V^{\widetilde M}_{({\bm{t}},r)}\leq2+\cro{2ld+\left(\prod_{j=1}^{l}t_{j}\right)(r+1)^{l}}\log_{2}\cro{4eU\log_{2}\left(2eU\right)}.
\end{equation*}
For a given $r\in\N$ and ${\bs{t}}=(t_{1},\ldots,t_{l})\in(\N^{*})^{l}$, we define the class of functions $\widetilde{\widetilde{\gGamma}}^{M}_{({\bs{t}},r)}$ on $\sW=\cro{0,1}^{d}$ as 
\begin{equation*}
\widetilde{\widetilde{\gGamma}}^{M}_{({\bs{t}},r)}=\left\{f\left(g_{1},\ldots,g_{l}\right),\;f\in\overline\bsS^{\cH,l}_{({\bs{t}},r)},\;g_{j}({\bs{w}})=\frac{\langle a_{j},{\bs{w}}\rangle+1}{2}\mbox{\ with\ }a_{j}\in\cC_{d},\;j\in\cro{l}\right\}
\end{equation*}
and denote $V^{\widetilde{\widetilde{M}}}_{({\bs{t}},r)}$ the VC dimension of it. We observe that $\widetilde{\widetilde{\gGamma}}^{M}_{({\bs{t}},r)}$ is a subset of $\widetilde\gGamma^{M}_{({\bs{t}},r)}$, therefore we have $V^{\widetilde{\widetilde{M}}}_{({\bs{t}},r)}\leq V^{\widetilde M}_{({\bm{t}},r)}$. The conclusion finally follows by $\bsG^{M}_{({\bs{t}},r)}=\left\{(\bsg\vee v_{-})\wedge v_{+},\;\bsg\in\widetilde{\widetilde{\gGamma}}^{M}_{({\bs{t}},r)}\right\}$ so that $V^{M}_{({\bm{t}},r)}\leq V^{\widetilde{\widetilde{M}}}_{({\bs{t}},r)}$.
\end{proof}
\subsubsection*{\bf{C.3 Proof of Proposition~\ref{vc-neural}}}
\begin{proof}
We note that for any function $f\in\overline\bsS_{(L,p,{\bm{s}})}$, it is determined by the values of non-zero parameters in the weight matrices $A_{l}$ and shift vectors $b_{l}$, $l\in\{0,\ldots,L\}$. For each $l\in\{0,\ldots,L\}$, we denote $s_{l}$ the number of non-zero parameters in $A_{l}$ and $b_{l}$ and $s=\sum_{l=0}^{L}s_{l}$ which is exact the value of $\|{\bs{s}}\|_{0}$. Given $L\in\N^{*}$, $p\in\N^{*}$ and ${\bs{s}}\in\{0,1\}^{\overline p}$, the total number of parameters determining $f\in\overline\bsS_{(L,p,{\bm{s}})}$ is $s$. We denote $f_{a}$ the function in $\overline\bsS_{(L,p,{\bm{s}})}$ induced by the parameters vector $a\in\R^{s}$. Given a fixed point ${\bm{w}}\in\sW$, for any $a\in\R^{s}$, we denote $h_{\bm{w}}(a)=f_{a}({\bs{w}})$. 

For given $L,p\in\N^{*}$, if $\|{\bs{s}}\|_{0}=0$, there is only one function $f\equiv0$ in $\overline\bsS_{(L,p,{\bm{s}})}$ so that $V_{(L,p,{\bm{s}})}=0$ by the definition of VC dimension which satisfies the conclusion. Therefore, given $L,p\in\N^{*}$, we only need to consider the situation where $\|{\bs{s}}\|_{0}\geq1$, i.e. ${\bs{s}}\in\{0,1\}^{\overline p}\backslash\{0\}^{\overline p}$.

Given $m$ fixed points $({\bm{w}}_{1},t_{1}),\ldots,({\bm{w}}_{m},t_{m})\in\sW\times\R$, we first study the total number of signs patterns for the ReLU neural network $\overline\bsS_{(L,p,{\bm{s}})}$ can output when $a$ varies in $\R^{s}$, i.e. $$N(m)=\Big|\left\{(\sgnn(h_{{\bm{w}}_{1}}(a)-t_{1}),\ldots,\sgnn(h_{{\bm{w}}_{m}}(a)-t_{m})),\;a\in\R^{s}\right\}\Big|.$$ Once we have knowledge of it, the necessary condition for $V_{(L,p,{\bm{s}})}$ being the VC dimension of $\overline\bsS_{(L,p,{\bm{s}})}$ is to satisfy the inequality $$2^{V_{(L,p,{\bm{s}})}}\leq N\cro{V_{(L,p,{\bm{s}})}},$$ from which we finally deduce the bound for $V_{(L,p,{\bm{s}})}$. The idea of bounding $N(m)$ is to construct a partition $\cS$ of $\R^{s}$ such that within each region $S\in\cS$, the functions $h_{{\bm{w}}_{j}}(a)-t_{j}$ $j\in\{1,\ldots,m\}$ are all fixed polynomials of $a$ with a bounded degree. 

The partition is constructed layer by layer for each $l\in\{0,\ldots,L\}$ through a sequence of successive refinements $\cS_{0},\cS_{1},\ldots,\cS_{L}$ in the following way:
\begin{itemize}
\item[1.] $|\cS_{0}|=1$. For all $l\in\{1,\ldots,L\}$, 
\begin{equation}\label{partition-number}
\left\{
\begin{aligned}
&|\cS_{l}|=|\cS_{l-1}| &,&\mbox{\quad if $\sum_{i=0}^{l-1}s_{i}=0$,}\\
&|\cS_{l}|\leq2\left(\frac{2emlp}{\sum_{i=0}^{l-1}s_{i}}\right)^{\sum_{i=0}^{l-1}s_{i}}|\cS_{l-1}| &,&\mbox{\quad if $\sum_{i=0}^{l-1}s_{i}\not=0$.}
\end{aligned}
\right.
\end{equation}
\item[2.] For each $l\in\{1,\ldots,L\}$ and each $S\in\cS_{l-1}$, when $a$ varies in $S$, the input to each node in response to each ${\bs{w}}_{j}$, $j\in\{1,\ldots,m\}$ in the $l$-th layer is a fixed polynomial of total degree no more than $l$ in at most $\sum_{i=0}^{l-1}s_{i}$ variables of $a$. 
\end{itemize}

We take $\cS_{0}=\{\R^{s}\}$. We check that with this choice both two rules mentioned above are satisfied. It is trivial that $|\cS_{0}|=1$. Moreover, for each fixed ${\bm{w}}_{j}$, $j\in\{1,\ldots,m\}$, the input to each node in the first layer can be written as a linear combination of the parameters in $A_{0}$ and $b_{0}$. Therefore, it is a fixed polynomial of degree no more than 1 in at most $s_{0}$ variables of $a$. The second rule of constructing the partition is also satisfied. Suppose that we could do such a successive partition up to $l-1$ and get a sequence of refinements $\cS_{0},\ldots,\cS_{l-1}$, we now consider to define $\cS_{l}$, where $1\leq l\leq L$. For any ${\bs{w}}_{j}$ with $j\in\{1,\ldots,m\}$, $k\in\{1,\ldots,p\}$ and $S\in\cS_{l-1}$, we denote $h_{{\bm{w}}_{j},k,S}(a)$ the input of the $k$-th node in the $l$-th layer in response to ${\bm{w}}_{j}$ for some $a\in S$. By the induction rules, $h_{{\bm{w}}_{j},k,S}(a)$ is a fixed polynomial of total degree no more than $l$ in at most $\sum_{i=0}^{l-1}s_{i}$ variables.

If $\sum_{i=0}^{l-1}s_{i}\not=0$, for each $S\in\cS_{l-1}$, applying Lemma~\ref{poly-vc} to the collection of polynomials $$\cC=\{h_{{\bm{w}}_{j},k,S}(a),\;k\in\{1,\ldots,p\},\;j\in\{1,\ldots,m\}\},$$ we know that for $1\leq l\leq L$, there are at most $$N_{l}=2\left(\frac{2emlp}{\sum_{i=0}^{l-1}s_{i}}\right)^{\sum_{i=0}^{l-1}s_{i}}$$ distinct signs patterns when $a$ varies in $S$. If $\sum_{i=0}^{l-1}s_{i}=0$, for any $S\in\cS_{l-1}$, any $k\in\{1,\ldots,p\}$ and any $j\in\{1,\ldots,m\}$, $h_{{\bm{w}}_{j},k,S}(a)$ is zero so that $\cC$ only attains one signs pattern and $N_{l}=1$. The successive partition is then based on a refinement of $\cS_{l-1}$ such that within each region, all the polynomials belonging to $\cC$ have fixed signs when $a$ varies. Thus, for each region $S\in\cS_{l-1}$, we partition it into at most $N_{l}$ subregions and get a refined partition $\cS_{l}$ which satisfies the first rule of the partition. To check that $\cS_{l}$ satisfies the second rule, recall that for any $S'\in\cS_{l}$, since the input to any node in the $l$-th layer is a fixed polynomial in at most $\sum_{i=0}^{l-1}s_{i}$ variables of degree no more than $l$ and all the polynomials in the collection $$\{h_{{\bm{w}}_{j},k,S'}(a),\;k\in\{1,\ldots,p\}, j\in\{1,\ldots,m\}\}$$ have unchanged signs when $a$ varies in $S'$, we have for each $1\leq l\leq L$, the input to any node in the $(l+1)$-th layer in response to any ${\bm{w}}_{j}$ is a fixed polynomial of degree no more than $l+1$ in at most $\sum_{i=0}^{l}s_{i}$ variables of $a$. 

We proceed the partition procedure until getting $\cS_{L}$. Since every step of the partition satisfies \eref{partition-number}, we derive 
\begin{equation}\label{S_L-1-cardinality}
|\cS_{L}|\leq\prod_{l=1}^{L}N_{l}.
\end{equation}
For any $S\in\cS_{L}$, since $s\geq1$, the output of the whole network in response to any ${\bm{w}}_{j}$ is a fixed polynomial of degree no more than $L+1$ in at most $s$ variables. By Lemma~\ref{poly-vc} again, we have for any $S\in\cS_{L}$,
\begin{align}
N_{L+1}&=\Big|\{(\sgnn(h_{{\bm{w}}_{1}}(a)-t_{1}),\ldots,\sgnn(h_{{\bm{w}}_{m}}(a)-t_{m})),\;a\in S\}\Big|\nonumber\\
&\leq2\left(\frac{2em(L+1)}{s}\right)^{s}.\label{last-layer-cardinality}
\end{align}
We intersect all these regions with each $S\in\cS_{L}$ which yields a refined partition $\cS_{L+1}=\{S_{1},\ldots, S_{N}\}$ over $\R^{s}$ with $N=\prod_{l=1}^{L+1}N_{l}$ combining \eref{S_L-1-cardinality} and \eref{last-layer-cardinality}. We denote $p_{l}=p$ for all $l\in\{1,\ldots,L\}$ and $p_{L+1}=1$. Let $\overline l$ stand for the smallest number belonging to $\{1,\ldots,L+1\}$ such that $\sum_{i=0}^{\overline l-1}s_{i}\geq1$. Therefore, for any $m$ arbitrarily chosen $({\bm{w}}_{1},t_{1}),\ldots,({\bm{w}}_{m},t_{m})\in\sW\times\R$,
\begin{align}
N(m)&\leq\sum_{k=1}^{N}\Big|\{(\sgnn(h_{{\bm{w}}_{1}}(a)-t_{1}),\ldots,\sgnn(h_{{\bm{w}}_{m}}(a)-t_{m})),\;a\in S_{k}\}\Big|\nonumber\\
&\leq\prod_{l=1}^{L+1}N_{l}=2^{L+2-\overline l}\cro{\prod_{l=\overline l}^{L+1}\left(\frac{2emlp_{l}}{\sum_{i=0}^{l-1}s_{i}}\right)^{\sum_{i=0}^{l-1}s_{i}}}.\label{b-cardinality-neural}
\end{align}
For $l\in\{\overline l,\ldots,L+1\}$, let us denote $$c_{l}=\frac{2emlp_{l}}{\sum_{i=0}^{l-1}s_{i}},\quad\lambda_{l}=\frac{\sum_{i=0}^{l-1}s_{i}}{\sum_{l=\overline l}^{L+1}\sum_{i=0}^{l-1}s_{i}}.$$ We then apply Lemma~\ref{weighted-am-gm} to \eref{b-cardinality-neural} and obtain
\begin{align*}
N(m)&\leq2^{L+2-\overline l}\left(\prod_{l=\overline l}^{L+1}c_{l}^{\lambda_{l}}\right)^{\sum_{l=\overline l}^{L+1}\sum_{i=0}^{l-1}s_{i}}\leq2^{L+2-\overline l}\left(\sum_{l=\overline l}^{L+1}\lambda_{l}c_{l}\right)^{\sum_{l=\overline l}^{L+1}\sum_{i=0}^{l-1}s_{i}}\\
&\leq2^{L+2-\overline l}\left(\frac{2em\sum_{l=1}^{L+1}lp_{l}}{\sum_{l=\overline l}^{L+1}\sum_{i=0}^{l-1}s_{i}}\right)^{\sum_{l=\overline l}^{L+1}\sum_{i=0}^{l-1}s_{i}}\\
&\leq2^{L+2-\overline l}\left(\frac{2em\sum_{l=1}^{L+1}lp_{l}}{\sum_{l=1}^{L+1}\sum_{i=0}^{l-1}s_{i}}\right)^{\sum_{l=1}^{L+1}\sum_{i=0}^{l-1}s_{i}}.
\end{align*}
As we have mentioned, by the definition of VC-dimension, it is necessary to have
$$2^{V_{(L,p,{\bm{s}})}}\leq N\cro{V_{(L,p,{\bm{s}})}}\leq2^{L+2-\overline l}\cro{\frac{2e\left(\sum_{l=1}^{L+1}lp_{l}\right)V_{(L,p,{\bm{s}})}}{\sum_{l=1}^{L+1}\sum_{i=0}^{l-1}s_{i}}}^{\sum_{l=1}^{L+1}\sum_{i=0}^{l-1}s_{i}}.$$ Provided $L,p\in\N^{*}$, we have $\sum_{l=1}^{L+1}lp_{l}\geq3$ so that $2e(\sum_{l=1}^{L+1}lp_{l})\geq16$. We then apply Lemma~\ref{tech-inequality} with $m=V_{(L,p,{\bm{s}})}$, $t=L+2-\overline l$, $r=2e(\sum_{l=1}^{L+1}lp_{l})$ and $w=\sum_{l=1}^{L+1}\sum_{i=0}^{l-1}s_{i}$, and obtain
\begin{align}
V_{(L,p,{\bm{s}})}&\leq L+2-\overline l+\left(\sum_{l=1}^{L+1}\sum_{i=0}^{l-1}s_{i}\right)\log_{2}\cro{\left(4e\sum_{l=1}^{L+1}lp_{l}\right)\log_{2}\left(2e\sum_{l=1}^{L+1}lp_{l}\right)}\nonumber\\
&\leq L+\left(\sum_{l=1}^{L+1}\sum_{i=0}^{l-1}s_{i}\right)\log_{2}\cro{\left(4e\sum_{l=1}^{L+1}lp_{l}\right)\log_{2}\left(2e\sum_{l=1}^{L+1}lp_{l}\right)}+1\nonumber\\
&\leq (L+1)(s+1)\log_{2}\cro{2\left(2e\sum_{l=1}^{L+1}lp_{l}\right)^{2}}.\nonumber
\end{align}
We complete the proof.
\end{proof}

\section*{Acknowledgements}
The author is grateful to her supervisor Prof. Yannick Baraud for helpful discussions and constructive suggestions.

\bibliographystyle{apalike}

\end{document}